\numberwithin{equation}{section}
\newtheorem{theorem}[equation]{Theorem}
\newtheorem*{thm}{Theorem}
\newtheorem{proposition}[equation]{Proposition}
\newtheorem{lemma}[equation]{Lemma}
\newtheorem{corollary}[equation]{Corollary}
\theoremstyle{definition}
\newtheorem{defn}[equation]{Definition}
\newtheorem{eg}[equation]{Example}
\newtheorem{rmk}[equation]{Remark}
\newenvironment{definition}{\begin{defn}}{\hfill $\blacksquare$ \end{defn}}
\newenvironment{example}{\begin{eg}}{\hfill $\blacksquare$ \end{eg}}
\newenvironment{remark}{\begin{rmk}}{\hfill $\blacksquare$ \end{rmk}}
\newcommand{\N}{\mathbf{N}}
\newcommand{\Z}{\mathbf{Z}}
\renewcommand{\phi}{\varphi}
\newcommand{\eps}{\varepsilon}
\renewcommand{\tilde}[1]{\widetilde{#1}}
\newcommand{\ol}[1]{\overline{#1}}
\newcommand{\ul}[1]{\underline{#1}}
\newcommand{\DS}{\displaystyle}
\newcommand{\arxiv}[1]{\href{http://arxiv.org/abs/#1}{{\tt arXiv:#1}}}
\def\Ddots{\mathinner{\mkern1mu\raise\p@
\vbox{\kern7\p@\hbox{.}}\mkern2mu
\raise4\p@\hbox{.}\mkern2mu\raise7\p@\hbox{.}\mkern1mu}}
\DeclareMathOperator{\im}{image} 
\renewcommand{\hom}{\operatorname{Hom}} 
\DeclareMathOperator{\ext}{Ext} 
\newcommand{\GL}{\mathbf{GL}}
\newcommand{\SL}{\mathbf{SL}}
\newcommand{\Sp}{\mathbf{Sp}}
\newcommand{\SO}{\mathbf{SO}}
\newcommand{\Sym}{\mathrm{Sym}}
\DeclareMathOperator{\pdim}{pdim}
\DeclareMathOperator{\idim}{idim}
\DeclareMathOperator{\gldim}{gldim}
\newcommand{\bL}{\mathbf{L}}
\newcommand{\QQ}{\mathscr{Q}}
\newcommand{\RR}{\mathcal{R}}
\newcommand{\bS}{\mathbf{S}}
\DeclareMathOperator{\Rep}{Rep}
\DeclareMathOperator{\SRep}{SRep}
\newcommand{\SG}{\mathbf{SG}}
\DeclareMathOperator{\SI}{SI}
\DeclareMathOperator{\SSI}{SSI}
\DeclareMathOperator{\pf}{pf}
\DeclareMathOperator{\res}{res}
\newcommand{\Gr}{\mathbf{Gr}}
\newcommand{\fm}{\mathfrak{m}}
\newcommand{\surj}{\twoheadrightarrow}
\title{Symmetric quivers, invariant theory, and saturation theorems
  for the classical groups} 
\author{Steven V Sam} 
\date{October 20, 2011}
\begin{document}

\setcounter{secnumdepth}{5}
\setcounter{tocdepth}{2}

\maketitle

\begin{abstract} Let $G$ denote either a special orthogonal group or a
  symplectic group defined over the complex numbers. We prove the
  following saturation result for $G$: given dominant weights
  $\lambda^1, \dots, \lambda^r$ such that the tensor product
  $V_{N\lambda^1} \otimes \cdots \otimes V_{N\lambda^r}$ contains
  nonzero $G$-invariants for some $N \ge 1$, we show that the tensor
  product $V_{2\lambda^1} \otimes \cdots \otimes V_{2\lambda^r}$ also
  contains nonzero $G$-invariants. This extends results of
  Kapovich--Millson and Belkale--Kumar and complements similar results
  for the general linear group due to Knutson--Tao and
  Derksen--Weyman. Our techniques involve the invariant theory of
  quivers equipped with an involution and the generic representation
  theory of certain quivers with relations.
\end{abstract}


\section{Introduction.}

Throughout, we fix an algebraically closed field $K$. We shall assume
that $K$ is of characteristic 0 in the introduction. However, as some
results in this paper extend to positive characteristic, we will
mention what assumptions we make on the characteristic within each
section of the paper.

When $G$ is a reductive group defined over $K$, and $\lambda$ is a
dominant weight of $G$, the notation $V_\lambda$ denotes an
irreducible representation of $G$ with highest weight $\lambda$. Also,
if $W$ is a representation of $G$, we use $W^G$ to denote the subspace
of $G$-invariants. The following theorem is the main result of the
paper.

\begin{theorem} \label{bcdsaturation} Let $G$ be either a special
  orthogonal or symplectic group, and let $\lambda^1, \dots,
  \lambda^r$ be dominant weights of $G$. If $(V_{N\lambda^1} \otimes
  \cdots \otimes V_{N\lambda^r})^G \ne 0$ for some $N \ge 1$, then
  $(V_{2\lambda^1} \otimes \cdots \otimes V_{2\lambda^r})^G \ne 0$.
\end{theorem}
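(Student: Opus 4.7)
The plan is to encode $G$-invariants in tensor products as a space of semi-invariants on the representation variety of a symmetric quiver, mirroring the Derksen--Weyman strategy for $\GL_n$, and then to exploit the fact that doubling a dimension vector automatically produces a symmetric representation via a hyperbolic construction. First I would attach to the data $(\lambda^1,\dots,\lambda^r)$ a star-shaped quiver $Q$ with a central node and $r$ legs, with dimension vector $d=d(\lambda)$ dictated by the shapes of the $\lambda^i$; equipping $Q$ with an involution reflecting the action of $G$ and passing to the symmetric (respectively symplectic) representation variety, one identifies $(V_{\lambda^1}\otimes\cdots\otimes V_{\lambda^r})^G$ with an explicit space of weight-$\sigma_\lambda$ semi-invariants on $\SRep(Q,d)$ or $\SpRep(Q,d)$.

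Second, I would apply a Schofield--Derksen--Weyman-style characterization: non-vanishing of this semi-invariant space is equivalent to a linear homological positivity condition on $(d,\sigma)$---Hom-vanishing for a family of symmetric representations, equivalently $\sigma\cdot d'\ge 0$ for all involution-compatible sub-dimension-vectors $d'\subseteq d$, together with $\sigma\cdot d=0$. These conditions are preserved under scaling by positive rationals, so they carve out a rational polyhedral cone $\mathcal C$ in the space of (weight, dimension vector) pairs. The hypothesis $(V_{N\lambda^1}\otimes\cdots\otimes V_{N\lambda^r})^G\ne 0$ then places $N(\lambda,d_\lambda)$ in $\mathcal C$, and hence already $(\lambda,d_\lambda)\in\mathcal C$ by homogeneity.

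Third, to upgrade this rational membership to an actual non-vanishing of the symmetric semi-invariant space one must exhibit an integral symmetric (or symplectic) representation realizing the requisite generic subrepresentation structure. This is where the factor of $2$ enters naturally: given any integral representation $V$ of the underlying ordinary quiver witnessing $(\lambda,d_\lambda)\in\mathcal C$ in the $\GL$ sense, the hyperbolic double $V\oplus V^*$, with pairing determined by the involution of $Q$, canonically inherits the structure of a symmetric (resp.\ symplectic) representation of dimension vector $2d_\lambda$ and weight $2\sigma_\lambda$. Translating back through the correspondence of the first step produces the desired non-zero element of $(V_{2\lambda^1}\otimes\cdots\otimes V_{2\lambda^r})^G$.

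I expect the main obstacle to lie at the interface of these steps: one must verify that the hyperbolic doubling is compatible with whatever quiver relations are needed to cut out tensor-invariant multiplicities (as opposed to mere Schur-functor multiplicities), and that $\sigma_\lambda$-semistability for the ordinary quiver really does lift to semistability for the symmetric weight on the doubled representation, presumably through an orthogonal/symplectic analogue of King's criterion. Equivalently, one needs a canonical-decomposition theorem for symmetric quivers with the relevant relations, refining Derksen--Weyman's analysis for type $A$ quivers and controlling how the involution interacts with the decomposition of $d_\lambda$. This is the technical heart of the argument; everything else is an exercise in translating between representation theory of $G$ and semi-invariants on $\SRep$ or $\SpRep$.
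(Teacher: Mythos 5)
Your step 1 --- encoding $G$-invariants of tensor products as weight spaces of semi-invariants on a symmetric star-shaped flag quiver equipped with suitable relations --- matches the paper's strategy in spirit. However, the crux of the proof lies in steps 2 and 3, and there your proposal both assumes what must be proved and misidentifies the mechanism for the factor of~$2$.

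Regarding step 2: you assert that non-vanishing of the relevant symmetric semi-invariant weight space is \emph{equivalent} to a system of linear homological inequalities carving out a rational cone, and then invoke homogeneity to descend from $N(\lambda,d)$ to $(\lambda,d)$. That equivalence is precisely the hard part and does not carry over formally from the $\GL$ case for two independent reasons. First, the flag quiver must carry the relations $\tau(a^j_n)a^j_n = 0$ (to cut out isotropic flags), and the resulting quiver with relations has global dimension $2$; Schofield's theory of general representations relies on global dimension $\le 1$ (every submodule of a projective is projective), so the whole machinery of generic $\ext^1$ computations has to be re-established under extra hypotheses (see the extension quiver $Q^e/I^e$, Proposition~\ref{prop:dimension}, Theorem~\ref{theorem:grassmannian}, Lemma~\ref{lemma:induction} and Theorem~\ref{theorem:vanishingext}). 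Second, on symmetric representation varieties the determinantal semi-invariants $c^V$ no longer span; one must prove that the Pfaffian semi-invariants $\pf^V$ (square roots of $c^V$) do span (Theorem~\ref{pfaffians} and Proposition~\ref{proposition:pfaffianrelations}). Your outline treats both as black boxes, which hides the bulk of the argument.

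Regarding step 3: attributing the factor of $2$ to hyperbolic doubling $V \mapsto V \oplus V^*$ is not the paper's mechanism and, I believe, cannot be made to work as stated. The representation $V$ defining a determinantal semi-invariant $\ol{c}^V$ on $\SRep(Q/I,\beta)$ is \emph{never} required to be symmetric; $\ol{c}^V$ is merely the restriction of a function on $\Rep(Q/I,\beta)$, and the Schofield-type existence theorem produces an ordinary $V'$, not a symmetric one. The actual source of the factor $2$ is the Pfaffian--determinant dichotomy: symmetric semi-invariants are spanned by Pfaffians $\pf^V$ of weight $\tfrac12\sigma_{\dim V}$, but the generic-$\ext^1$ machinery can only certify non-vanishing of \emph{determinantal} semi-invariants $c^{V'}$, whose weight $\sigma_{\dim V'}$ is automatically ``even'' (it is twice a Pfaffian weight). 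Thus from $\SSI(Q/I,\beta^\delta)_{N\sigma} \ne 0$ one extracts a Pfaffian of weight $N\sigma$, squares it to get a determinant $c^V$ of weight $2N\sigma$ with $\dim V = N\alpha$, scales the resulting ext-vanishing inequalities $\langle N\alpha,\beta'\rangle_I \ge 0$ down to $\alpha$, and then Theorem~\ref{theorem:vanishingext} produces $V'$ of dimension $\alpha$ with $c^{V'}\ne 0$ of weight $\sigma_\alpha = 2\sigma$. Hyperbolic doubling would double the dimension vector of the defining representation and hence double the weight again (producing $4\sigma$ rather than $2\sigma$); moreover the putative ``half-weight'' $\GL$-witness you want to start from need not exist integrally --- which is exactly the obstruction Pfaffian semi-invariants are designed to circumvent.
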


We say that 2 is a {\bf saturation factor} for the special orthogonal
and symplectic groups. We will use the notation $\SO(m)$, ${\bf
  O}(m)$, and $\Sp(2n)$ to mean special orthogonal, orthogonal, and
symplectic groups, respectively.

\begin{corollary} \label{cor:spin} Let $G$ be the spin group ${\bf
    Spin}(m)$, and let $\lambda^1, \dots, \lambda^r$ be dominant
  weights of $G$. If $(V_{N\lambda^1} \otimes \cdots \otimes
  V_{N\lambda^r})^G \ne 0$ for some $N \ge 1$, then $(V_{4\lambda^1}
  \otimes \cdots \otimes V_{4\lambda^r})^G \ne 0$.
\end{corollary}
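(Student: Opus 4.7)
The plan is to deduce Corollary \ref{cor:spin} from Theorem \ref{bcdsaturation} by exploiting the double cover $\pi \colon \mathbf{Spin}(m) \twoheadrightarrow \SO(m)$. The nontrivial central element $z$ in $\ker(\pi)$ acts on each irreducible $V_\mu$ by a sign, and $V_\mu$ is pulled back from an $\SO(m)$-representation precisely when this sign is $+1$. On the level of weight lattices, the $\SO(m)$-weight lattice sits in the $\mathbf{Spin}(m)$-weight lattice as a sublattice of index two, so for every dominant weight $\mu$ of $\mathbf{Spin}(m)$ the weight $2\mu$ automatically lies in the $\SO(m)$-lattice.

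The first step is to pass from $N\vec{\lambda}$ to $2N\vec{\lambda}$ using the semigroup closure property. The multigraded ring
\[
\bigoplus_{\vec\mu}(V_{\mu^1}^* \otimes \cdots \otimes V_{\mu^r}^*)^{\mathbf{Spin}(m)} \;\cong\; \bigoplus_{\vec\mu} H^0\bigl((\mathbf{Spin}(m)/B)^r,\, L_{\mu^1}\boxtimes\cdots\boxtimes L_{\mu^r}\bigr)^{\mathbf{Spin}(m)}
\]
sits inside the multi-section ring of a family of line bundles on an integral variety, and is therefore an integral domain. Consequently the set $S := \{\vec\mu : (V_{\mu^1} \otimes \cdots \otimes V_{\mu^r})^{\mathbf{Spin}(m)} \ne 0\}$ is closed under addition, and the hypothesis $N\vec\lambda \in S$ gives $2N\vec\lambda = N\vec\lambda + N\vec\lambda \in S$. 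Because each $2N\lambda^i$ is now an $\SO(m)$-weight, each factor $V_{2N\lambda^i}$ descends along $\pi$ and the $\mathbf{Spin}(m)$-invariants coincide with the $\SO(m)$-invariants, so $(V_{2N\lambda^1} \otimes \cdots \otimes V_{2N\lambda^r})^{\SO(m)} \ne 0$.

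The final step is to apply Theorem \ref{bcdsaturation} to the $\SO(m)$-dominant weights $2\lambda^1, \ldots, 2\lambda^r$ with scaling factor $N$, yielding $(V_{4\lambda^1} \otimes \cdots \otimes V_{4\lambda^r})^{\SO(m)} \ne 0$. Since each $4\lambda^i$ is an $\SO(m)$-weight, this equals $(V_{4\lambda^1} \otimes \cdots \otimes V_{4\lambda^r})^{\mathbf{Spin}(m)}$, which is therefore nonzero. The factor of $4$ is natural in this setup: one factor of $2$ is needed to push the weights into the $\SO(m)$-lattice, and another comes from the $\SO(m)$-saturation factor of Theorem \ref{bcdsaturation}. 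I do not anticipate a serious obstacle: the only nontrivial ingredient beyond the main theorem is the semigroup closure property, which is standard for reductive groups.
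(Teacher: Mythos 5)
Your argument is correct and follows the same route as the paper's proof: reduce to Theorem~\ref{bcdsaturation} by passing through the covering $\mathbf{Spin}(m) \to \SO(m)$, using that $2\lambda^i$ lies in the $\SO(m)$ weight lattice and that the conclusion's representations $V_{4\lambda^i}$ factor through $\SO(m)$. The paper's proof is a one-sentence sketch that leaves the semigroup-closure (doubling from $N$ to $2N$) step implicit; you spell that step out explicitly, which is the only substantive point beyond quoting the theorem.
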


\begin{proof} If $\lambda$ is a dominant weight of ${\bf Spin}(m)$,
  then $2\lambda$ is a dominant weight of $\SO(m)$, and the action of
  ${\bf Spin}(m)$ factors through $\SO(m)$ on each $V_{4\lambda^i}$.
\end{proof}

\subsection{History and related results.}

Before we give an outline for the proof of
Theorem~\ref{bcdsaturation}, we mention some historical context for
the theorem and some results that have previously been proven in this
direction.

The results start with the so-called saturation conjecture proven by
Knutson--Tao \cite{knutson} and Derksen--Weyman \cite{saturation}.

\begin{thm}[Knutson--Tao, Derksen--Weyman] Let $\lambda^1, \dots,
  \lambda^r$ be dominant weights of $G = \GL(n)$. If $(V_{N\lambda^1}
  \otimes \cdots \otimes V_{N\lambda^r})^G \ne 0$, then
  $(V_{\lambda^1} \otimes \cdots \otimes V_{\lambda^r})^G \ne 0$.
\end{thm}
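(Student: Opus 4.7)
The plan is to follow the quiver-theoretic strategy of Derksen and Weyman, which meshes naturally with the techniques used later in this paper. The first step is to reinterpret the invariant space as a weight space of semi-invariants of a quiver. Let $Q$ be the star quiver with a central vertex $0$ and $r$ arms pointing toward $0$, where the $i$-th arm has length equal to the number of parts of $\lambda^i$, and give it the dimension vector $\alpha$ with $\alpha(0) = n$ and with dimensions along the $i$-th arm forming a partial flag encoding $\lambda^i$. A standard Cauchy identity (equivalently, a Borel--Weil argument on the product of partial flag varieties) identifies
$$\SI(Q, \alpha)_{\sigma_\lambda} \;\cong\; (V_{\lambda^1} \otimes \cdots \otimes V_{\lambda^r})^{\GL(n)},$$
where $\sigma_\lambda$ is the character of $\GL(\alpha) := \prod_v \GL(\alpha(v))$ determined by the $\lambda^i$.

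The second step is to invoke the main spanning theorem of \cite{saturation} (and the companion work of Schofield): the ring $\SI(Q, \alpha)$ is spanned as a vector space by determinantal semi-invariants $c^V$ obtained from minimal projective resolutions of representations $V$ with $\langle \underline{\dim}\, V, \alpha \rangle = 0$. As a direct consequence, $\SI(Q, \alpha)_\sigma \ne 0$ if and only if $\alpha$ is $\sigma$-semi-stable in the sense of King: $\sigma \cdot \alpha = 0$ and $\sigma \cdot \beta \le 0$ for every subdimension vector $\beta$ that occurs in a generic $\alpha$-dimensional representation.

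The final step is then essentially free. King's semi-stability condition is given by a homogeneous system of linear inequalities in $\sigma$, hence is preserved under multiplication by any positive integer $N$. Thus $\SI(Q, \alpha)_{N\sigma_\lambda} \ne 0$ immediately implies $\SI(Q, \alpha)_{\sigma_\lambda} \ne 0$, which under the identification of Step 1 translates into the desired saturation statement with factor $1$.

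The real difficulty is concentrated in Step 2: establishing that the $c^V$'s actually span, and not merely lie in, every graded piece of $\SI(Q, \alpha)$. The proof requires the reciprocity identity $c^V(W) = \pm c^W(V)$ whenever $\langle \underline{\dim}\, V, \underline{\dim}\, W \rangle = 0$, together with a delicate induction on dimension vectors to produce enough semi-invariants to exhaust each weight space. This algebraic engine is precisely what yields saturation with factor $1$ for $\GL(n)$; the remainder of the present paper must develop an analogous engine, using symmetric quivers and quivers with an involution, to push through factor $2$ for the classical groups in Theorem~\ref{bcdsaturation}.
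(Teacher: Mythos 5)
Your route --- encode the tensor-product invariants as a graded piece of $\SI(Q,\alpha)$ for a star quiver, invoke the spanning of semi-invariants by determinantal $c^V$'s, and conclude from the scaling-invariance of a linear semi-stability condition --- is exactly the strategy the paper outlines in \S 1.2 for Derksen--Weyman's proof. However, the logical chain in your Step~2 is compressed in a way that hides the second essential input. The spanning theorem by itself only gives you:
\[
\SI(Q,\alpha)_\sigma \ne 0 \iff \text{there exists } V \text{ of dimension } \gamma \text{ with } c^V \ne 0 \iff \text{generic } \ext^1(\gamma,\alpha)=0,
\]
where $\gamma$ is the dimension vector determined by $\sigma$ through the Euler form. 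Applying this to $N\sigma$ gives the same statement with $N\gamma$ in place of $\gamma$. The crucial remaining step --- that generic $\ext^1(N\gamma,\alpha)=0$ forces generic $\ext^1(\gamma,\alpha)=0$, equivalently that the vanishing of generic $\ext^1$ is cut out by a finite homogeneous system of linear inequalities in $\gamma$ --- is \emph{not} a consequence of the spanning theorem. It is a separate theorem of Schofield \cite{generalreps}, proved by a completely different recursive computation of generic $\ext^1$ in terms of smaller dimension vectors. Writing ``as a direct consequence, $\SI(Q,\alpha)_\sigma \ne 0$ iff $\alpha$ is $\sigma$-semi-stable'' and then appealing to linearity of King's condition attributes the scaling-invariance to the spanning theorem when in fact it lives entirely in Schofield's recursion. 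The paper's displayed chain of equivalences \eqref{eqn:equivalences} keeps these two inputs separate precisely for this reason: the first and third equivalences come from spanning, while the second (the one you actually need for saturation) is Schofield's.

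A smaller inaccuracy: the ``reciprocity'' $c^V(W) = \pm\, c^W(V)$ you cite is not well-posed, since $c^W(V)$ would require $\langle \dim W, \dim V\rangle = 0$, a different condition from $\langle \dim V,\dim W\rangle = 0$ because the Euler form is not symmetric. The identity actually used in Derksen--Weyman is that the single determinant $\det d^V_W$ defines a semi-invariant on $\Rep(Q,\beta)$ when $V$ is held fixed (this is $c^V$) and a semi-invariant on $\Rep(Q,\alpha)$ when $W$ is held fixed (written $c_W$); these coincide by construction, but $c_W$ is not $c^W$.
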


This problem itself was inspired by Klyachko's solution
\cite{klyachko} of Horn's problem of characterizing the possible
eigenvalues of Hermitian matrices $A_1, \dots, A_r$ whose sum is 0. We
leave the details out and refer to Fulton's paper \cite{fulton} for a
survey and further references.

When $r=3$, this theorem can be restated in terms of the
Littlewood--Richardson rule (see \cite[Theorem 2.3.4]{weyman} or
\cite[\S 12.5]{procesi}), which gives an explicit combinatorial recipe
for calculating the dimension of the $G$-invariant subspace of a
triple tensor product, or equivalently, for calculating tensor product
multiplicities. However, the formulation of this rule is not conducive
to proving the saturation property. The proof of Knutson--Tao involves
formulating a new combinatorial rule which more manifestly possesses
the saturation property. However, this approach seems to be difficult
to generalize. Our paper will follow the ideas of
Derksen--Weyman. Before reviewing the ideas from that paper, we
mention some other saturation results to put
Theorem~\ref{bcdsaturation} into perspective. We refer the reader to
\cite{kumar} for more results and conjectures related to tensor
product multiplicities.

\begin{thm}[Kapovich--Millson] Let $G$ be a simple connected group
  over $K$, and let $\lambda^1, \dots, \lambda^r$ be dominant weights
  of $G$ such that $\lambda^1 + \cdots + \lambda^r$ is in the root
  lattice of $G$. Let $k$ be the least common multiple of the
  coefficients of the highest root of $G$ written in terms of simple
  roots. If $(V_{N\lambda^1} \otimes \cdots\otimes V_{N\lambda^r})^G
  \ne 0$ for some $N \ge 1$, then $(V_{k^2\lambda^1} \otimes \cdots
  \otimes V_{k^2\lambda^r})^G \ne 0$. 
\end{thm}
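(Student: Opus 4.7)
The plan is to reformulate the non-vanishing question geometrically via the theory of buildings and to bound the denominators that arise when one passes between rational and integral data.

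By Borel--Weil and the Hilbert--Mumford criterion, the set of tuples $(\lambda^1, \ldots, \lambda^r)$ for which $(V_{N\lambda^1} \otimes \cdots \otimes V_{N\lambda^r})^G \ne 0$ holds for some $N \ge 1$ is a rational polyhedral cone $\mathcal{C}_G$ in the product of dominant chambers, cut out by inequalities coming from Schubert positions on partial flag varieties. Via Littelmann's path model, this cone coincides with the cone of side-length data of closed geodesic $r$-gons in the Euclidean building $\mathbf{B}$ of $G$: a tuple $(\lambda^i)$ lies in $\mathcal{C}_G$ iff there is a closed polygon in $\mathbf{B}$ whose $i$-th side, transported into the model apartment, is $\lambda^i$.

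With this equivalence in hand, the task is to show that when $(\lambda^i) \in \mathcal{C}_G \cap P^r$ and $\sum \lambda^i \in Q$, the tuple $(k^2 \lambda^i)$ is realized by an honest invariant. From $(\lambda^i) \in \mathcal{C}_G$ one obtains a closed rational polygon. To promote this to a polygon whose vertices are special vertices of $\mathbf{B}$, one scales by $k$: the denominators appearing when writing an arbitrary point of the fundamental alcove as a rational combination of simple coroots are controlled by the coefficients $a_i$ of the highest root $\theta = \sum_i a_i \alpha_i$, and so are killed by $k = \mathrm{lcm}(a_i)$. Converting an integral polygon back into a nonzero tensor invariant via the path model requires a further factor of $k$, because LS paths counting invariants live in the weight lattice $P$ rather than the coroot lattice, and the identification carries a torsion piece bounded by $k$.

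The main obstacle is this final integrality analysis: tracking two independent denominators (one combinatorial, one arithmetic) and showing each is at most $k$. The hypothesis $\sum \lambda^i \in Q$ is essential, since otherwise no closed polygon exists and the obstruction cannot be removed by any scaling. This is the strategy Kapovich--Millson execute in detail using the geometry of Hecke paths; an analogous argument via Mumford's numerical criterion is available, but both approaches pay the same $k^2$ price for passing through the building.
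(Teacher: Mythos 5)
The paper does not prove this theorem; it states it as background and refers the reader to Kapovich--Millson, Corollary~7.3 and Remark~7.2, so there is no in-paper argument to compare your sketch against. Your proposal is, rather, an outline of the cited external proof, and it does capture its overall architecture: reformulate saturated nonvanishing of invariants as the existence of a closed geodesic $r$-gon in the Euclidean building with prescribed $\Delta$-side lengths, then pay two factors of $k$ for two separate integrality passes.

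Two details of the outline are imprecise, though. First, the identification of the saturated tensor cone with the cone of side lengths of closed geodesic polygons is established by Kapovich--Leeb--Millson via GIT semistability of weighted configurations on generalized flag varieties; Littelmann's path model is not what produces this equivalence, it enters later as the device for converting polygons back into multiplicities. Second, the second factor of $k$ does not arise from a ``torsion piece'' between $P$ and the coroot lattice --- note that $[P:Q]$ is the connection index, which is not equal to $k$ in general (e.g.\ for $D_n$ one has $k=2$ but $[P:Q]=4$). In the Kapovich--Millson argument, unfolding the polygon produces a Hecke path, and a further dilation by $k$ is needed to upgrade a Hecke path to a genuine LS path satisfying Littelmann's chain condition, since the turns and folds must occur at special vertices in a way that the affine Weyl group can realize. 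Since the statement is only cited in the paper, none of this affects the paper itself, but these are the points to tighten if you want the sketch to match what Kapovich--Millson actually do.
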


See \cite[Corollary 7.3 and Remark 7.2]{kapovich}. For the special
orthogonal and symplectic groups, this gives a saturation factor of 4,
which our Theorem~\ref{bcdsaturation} improves to 2 (and drops the
assumption that $\lambda^1 + \cdots + \lambda^r$ be in the root
lattice). The improvements for the odd orthogonal groups and
symplectic groups have previously been shown by Belkale and Kumar
\cite[Theorems 6 and 7]{evensaturation}. So
Theorem~\ref{bcdsaturation} provides a new result for $G =
\SO(2n)$. Even in the known cases of the odd orthogonal groups and
symplectic groups, we believe that our proof still has merit in that
the ideas are uniform with respect to the classical groups and in some
sense are more elementary. Corollary~\ref{cor:spin} provides a slight
improvement to the general result of Kapovich--Millson as the next
example shows.

\begin{example}
  Let $G = {\bf Spin}(10)$, label the Dynkin diagram ${\rm D}_5$ as
  \[
  \xymatrix @-1.2pc { & & 4 & \\ 1 \ar@{-}[r] & 2 \ar@{-}[r] & 3 \ar@{-}[u]
    \ar@{-}[r] & 5}
  \]
  and let $\omega_1, \dots, \omega_5$ be the corresponding fundamental
  weights. Then $V_{4(\omega_2 + \omega_4 + \omega_5)}^{\otimes 2}
  \otimes V_{4(2\omega_1 + \omega_2 + \omega_5)}$ contains a nonzero
  $G$-invariant, but $2(\omega_2 + \omega_4 + \omega_5) + (2\omega_1 +
  \omega_2 + \omega_5)$ is not in the root lattice. Furthermore, none
  of these weights are sums of minuscule weights.
\end{example}

The relevance of the condition that $\lambda^1 + \cdots + \lambda^r$
be in the root lattice is that $(V_{\lambda^1} \otimes \cdots \otimes
V_{\lambda^r})^G$ can only be nonzero if this condition holds:
$\lambda^1 + \cdots + \lambda^r$ and 0 would be weights of the tensor
product, and any weights of a representation are equal modulo the root
lattice. Conjecturally, the saturation constant in the general result
of Kapovich and Millson for the even spin group can be shown to be
1. The index of the root lattice in the weight lattice for
$\SO(2n+1)$, $\Sp(2n)$, and $\SO(2n)$, is 2, 1, and 2, respectively,
so this conjecture includes the statement of
Theorem~\ref{bcdsaturation}. This more general statement has been
proven for ${\bf Spin}(8)$, see \cite{spin8}.

\begin{example} Theorem~\ref{bcdsaturation} cannot be strengthened by
  replacing even saturation with actual saturation because the
  condition that $\lambda^1 + \cdots + \lambda^r$ be in the root
  lattice is not a linear condition. More explicitly, we have the
  following tensor product decompositions in types B, C, and D:
  \begin{align*}
    V_{(1)} \otimes V_{(1)} &= V_{(2)} \oplus V_{(1,1)} \oplus
    V_{(0)}\\
    V_{(2)} \otimes V_{(2)} &= V_{(4)} \oplus V_{(3,1)} \oplus
    V_{(2,2)} \oplus V_{(2)} \oplus V_{(1,1)} \oplus V_{(0)},
  \end{align*}
  where we have identified weights with partitions as in
  Section~\ref{sec:classicalrepn}. In other words, $(V_{(1)}^{\otimes
    3})^G = 0$ but $(V_{(2)}^{\otimes 3})^G \ne 0$. Note that the
  weight $(3)$ is in the root lattice in type B, but not in types C
  and D, so even taking into account the root lattice condition, the
  saturation factor cannot be 1 for type B. Counterexamples are also
  known for type C.
\end{example}

\subsection{Outline of the paper.}

We first outline the proof of saturation for the general linear group
due to Derksen--Weyman \cite{saturation}. That paper is a study of the
semi-invariants of quivers without oriented cycles, see
Section~\ref{section:quiversintro} for definitions. The connection to
tensor product multiplicities is as follows. Given dominant weights
$\lambda^1, \dots, \lambda^r$ for $\GL(n)$, one can produce a quiver
$Q$, a dimension vector $\beta$, and a weight $\theta$ such that
\begin{align} \label{eqn:semiinvariants}
(V_{N\lambda^1} \otimes \cdots \otimes V_{N\lambda^r})^{\GL(n)} \cong
\SI(Q, \beta)_{N\theta}
\end{align}
for any $N \ge 1$. So the saturation problem can be reduced to proving
a related saturation problem for semi-invariants of the quiver $Q$. In
fact, they prove this saturation result for any quiver without
oriented cycles. More precisely, one proves the following
equivalences:
\begin{align} \label{eqn:equivalences}
\SI(Q, \beta)_{N\theta} \ne 0 \iff \ext^1(N\alpha, \beta) = 0 \iff
\ext^1(\alpha, \beta) = 0 \iff \SI(Q, \beta)_\theta \ne 0,
\end{align}
see Section~\ref{section:semicontinuity} for definitions.

The benefit from working in this more general context is that it
allows reductions to smaller quivers or smaller dimension vectors that
are not necessarily related to tensor product multiplicities. There
are actually two key inductions. Given a representation $W$ of $Q$,
Schofield \cite{schofield} introduced the determinantal semi-invariant
$c^W$ which is a nonzero function if and only if $\ext^1(W, \beta)$
generically vanishes. The main result of \cite{saturation} is that the
spaces of semi-invariants are linearly spanned by the $c^W$ for
various representations $W$ of dimension $\alpha$. This provides the
first and third equivalences above. Their proof involves a series of
reductions to smaller quivers with the base case being the generalized
Kronecker quiver on two vertices. The second equivalence was provided
by Schofield \cite{generalreps}, who showed that the dimension of the
generic extension group $\ext^1(\alpha, \beta)$ can be calculated
recursively from smaller dimension vectors, and the form of this
recursion shows that the dimension is 0 if and only if $\alpha$
satisfies a finite system of linear inequalities that depend only on
$\beta$ and $\theta$.

~

So one may hope that the ideas in the above proof can be generalized
to the other classical groups. In trying to get an analogue of
\eqref{eqn:semiinvariants} for the orthogonal and symplectic groups,
one needs to introduce two complications to the quiver $Q$. First, one
introduces an involution on $Q$ and restricts to studying the
representations compatible with this involution. We call these
symmetric quivers and their symmetric representations. Second, one
replaces the path algebra $KQ$ by a certain quotient ring
$KQ/I$. Geometrically, both of these complications amount to
restricting to certain subvarieties of the representation varieties of
$Q$.

With regard to the first complication, the spaces of semi-invariants
are no longer spanned by the determinantal semi-invariants. The fact
responsible for this is that the determinant of a generic
skew-symmetric matrix is the square of its Pfaffian. Motivated by
this, we introduce Pfaffian semi-invariants as square roots of
determinantal semi-invariants and show that they linearly span the
space of semi-invariants for symmetric quivers. This is the content of
Section~\ref{section:pfaffians}. We remark that we know of no general
criterion for a determinantal semi-invariant to possess a square root.

The second complication has the following effect. Any submodule of a
projective module over $KQ$ is also projective. This is the same as
saying that the global dimension of $KQ$ is at most 1. This fact was
used extensively in Schofield's proofs. In general, the global
dimension of $KQ/I$ is bigger than 1. In our case, it is 2, so the
problem is not so bad, but Schofield's results no longer apply. To get
around this, we extend Schofield's results in
Section~\ref{section:relations} in the case of global dimension 2
under certain assumptions which are sufficient for our applications.

Finally, in Section~\ref{section:bcdLRcoeff}, we combine these two
generalizations to prove the analogue of \eqref{eqn:equivalences}.

\subsection*{Conventions.}

All topological notions refer to the Zariski topology. For us, a
variety is a separated finite type scheme over $K$ which need not be
irreducible nor reduced. Any field that is implicitly used in this
paper refers to the algebraically closed field $K$. The set of
integers is denoted by $\Z$, and the set of nonnegative integers is
denoted by $\N$.

\subsection*{Acknowledgements.} 

The author thanks Jerzy Weyman for suggesting this problem and for
helpful discussions. We also thank Artem Lopatin for pointing out the
article \cite{lopatin} which allows us to extend
Lemma~\ref{fund:tensors} to positive characteristic. The author was
supported by an NSF graduate research fellowship and an NDSEG
fellowship while this work was done. The calculations for some of the
examples involving tensor product multiplicities were done using the
software LiE \cite{lie}.

\section{Semi-invariants of symmetric
  quivers.} \label{section:pfaffians} 

\subsection{Quivers.} \label{section:quiversintro}

As a general reference for quivers, we refer to \cite{assem}. In this
section, no assumption on the characteristic of $K$ is made. A {\bf
  quiver} $Q$ is the data $(Q_0, Q_1, t, h)$ where $Q_0$ is the {\bf
  vertex set}, $Q_1$ is the {\bf arrow set}, and $t, h \colon Q_1 \to
Q_0$ are functions.  For $a \in Q_1$ we call $ta$ and $ha$ its {\bf
  tail} and {\bf head} and depict it by the following diagram:
\[
ta \xrightarrow{a} ha,
\]
so that the definitions of paths, cycles, etc. should be
self-evident. The {\bf path algebra} $KQ$ is defined as a vector space
to be the finite linear combinations of paths in $Q$. The product of
two paths $p_1$ and $p_2$ is defined to be the concatenation $p_1p_2$
if this is a well-defined path (the sequence $a_n \cdots a_1$ means
the path that starts with $a_1$ and ends with $a_n$) and is 0
otherwise.

We will always assume that $Q_0$ is finite and that $Q$ has no
directed cycles.

\subsubsection{Representations.}

Elements $\beta \in \N^{Q_0}$ are {\bf dimension vectors}. The {\bf
  Euler form} $\langle,\rangle_Q \colon \Z^{Q_0} \times \Z^{Q_0} \to
\Z$ is
\[
\langle \alpha, \beta \rangle_Q = \sum_{x \in Q_0} \alpha(x) \beta(x)
- \sum_{a \in Q_1} \alpha(ta) \beta(ha).
\]
When $Q$ is clear from context, we will drop the subscript. We define
the {\bf representation variety}
\[
\Rep(Q,\beta) = \bigoplus_{a \in Q_1} \hom(K^{\beta(ta)},
K^{\beta(ha)}),
\]
and the groups
\begin{align*}
\GL(Q,\beta) = \prod_{x \in Q_0} \GL(K^{\beta(x)}), \quad
\SL(Q,\beta) = \prod_{x \in Q_0} \SL(K^{\beta(x)})
\end{align*}
which act on $\Rep(Q,\beta)$ via 
\[
(g_x)_{x \in Q_0} \cdot (\phi_a)_{a \in Q_1} = ( g_{ha} \phi_a
g_{ta}^{-1} )_{a \in Q_1}.
\]

A representation of $Q$ of dimension $\beta$ is the assignment of a
vector space $V(x)$ of dimension $\beta(x)$ for each $x \in Q_0$, as
well as a linear map $V_a \colon V(ta) \to V(ha)$ for each $a \in
Q_1$. In this case, we write $\dim V = \beta$. A morphism of two
representations $\phi \colon V \to W$ is a collection of linear maps
$(\phi_x)_{x \in Q_0}$ such that the evident squares all
commute. Geometrically, representations of $Q$ correspond to
$K$-points in $\Rep(Q, \beta)$, and two representations are isomorphic
if and only if they belong to the same $\GL(Q,
\beta)$-orbit. Algebraically, representations of $Q$ are the same as
(left) modules of the path algebra $KQ$. This latter definition makes
it clear how we can define extensions, projective resolutions, etc.

\subsubsection{Semi-invariants.} \label{section:quiverSI}

For an affine variety $X$, we denote its coordinate ring by
$K[X]$. For a quiver $Q$ and dimension vector $\beta \in \N^{Q_0}$, we
define the ring of {\bf semi-invariants} as the invariants
\[
\SI(Q, \beta) = K[\Rep(Q, \beta)]^{\SL(Q, \beta)}.
\]
We grade it by the characters $\chi$ of $\GL(Q, \beta)$
\[
\SI(Q, \beta)_\chi = \{ f \in \SI(Q, \beta) \mid g \cdot f = \chi(g)f
\text{ for all } g \in \GL(Q, \beta) \}.
\]
We call $\chi$ the {\bf weight} of these semi-invariants. The
characters of $\GL(Q, \beta)$ are of the form
\[
(g_x)_{x \in Q_0} \mapsto \prod_{x \in Q_0} (\det g_x)^{\sigma(x)}
\]
for $\sigma \in \Z^{Q_0}$. Hence we can identify weights with elements
of $\Z^{Q_0}$.

~

For $x,y \in Q_0$, let $[x,y]$ denote the $K$-vector space whose basis
is the paths from $x$ to $y$. The indecomposable projective
representations of $Q$ are indexed by $Q_0$: for $x \in Q_0$, set
$P_x$ to be the representation with $P_x(y) = [x,y]$ for $y \in Q_0$
and $P_{x,a} \colon [x,ta] \to [x,ha]$ is the natural map which
appends the arrow $a$ to the end of a path from $x$ to $ta$. There is
a {\bf canonical resolution} for each representation $V$ of $Q$
\[
0 \to \bigoplus_{a \in Q_1} V(ta) \otimes P_{ha} \xrightarrow{d^V}
\bigoplus_{x \in Q_0} V(x) \otimes P_x \to V \to 0,
\]
where the differential $d^V$ is described as follows. Given $v \otimes
p \in V(ta) \otimes P_{ha}$, send it to $(V(ta) \otimes P_{ta}) \oplus
(V(ha) \otimes P_{ha})$, where the map to the first factor is induced
by the inclusion $P_{ha} \subset P_{ta}$ given by appending $a$ to the
beginning of a path, and the map to the second factor is $-V_a \otimes
1_{P_{ha}}$.

Given another representation $W$, we define $d^V_W = \hom(d^V, W)$. We
can also define $d^V_W$ by
\begin{align*}
  \bigoplus_{x \in Q_0} \hom(V(x), W(x)) &\xrightarrow{d^V_W}
  \bigoplus_{a \in Q_1} \hom(V(ta), W(ha))\\
  (\phi_x)_{x \in Q_0} &\mapsto (\phi_{ha}V_a - W_a\phi_{ta})_{a \in Q_1}
\end{align*}

Let $\alpha = \dim V$ and $\beta = \dim W$. In the case that $\langle
\alpha, \beta \rangle = 0$, $d^V_W$ is a map between vector spaces of
the same dimension. By fixing bases, we can define its determinant
$c^V_W = \det d^V_W$. This is only well-defined up to a nonzero scalar
multiple, but this choice will not be important for us. Then $c^V$
gives a polynomial function on $\Rep(Q, \beta)$, which is a
semi-invariant of weight $\sigma^\circ_\alpha$ defined by
\begin{align} \label{usualweight}
\sigma^\circ_\alpha(x) = \langle \alpha, \eps_x \rangle
\end{align}
where $\eps_x$ is the vector defined by $\eps_x(x) = 1$ and $\eps_x(y)
= 0$ for $y \ne x$. These semi-invariants are called {\bf
  determinantal semi-invariant}. See \cite{schofield} for some basic
properties. We recall a fundamental result on semi-invariants of
quivers due to Derksen and Weyman \cite[Theorem 1]{saturation}.

\begin{thm}[Derksen--Weyman] Let $Q$ be a quiver without oriented
  cycles and let $\beta \in \N^{Q_0}$. Then $\SI(Q, \beta)$ is
  linearly spanned by determinantal semi-invariants $c^V$ where
  $\langle \dim V, \beta \rangle = 0$.
\end{thm}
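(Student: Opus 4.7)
My plan is to follow the Derksen--Weyman strategy of decomposing the coordinate ring via Cauchy's formula and then realizing each piece as a determinantal semi-invariant. Concretely, I would start with the $\GL(Q,\beta)$-module decomposition
\[ K[\Rep(Q,\beta)] \cong \bigotimes_{a \in Q_1} \bigoplus_{\lambda^a} \bS_{\lambda^a}(K^{\beta(ta)})^{*} \otimes \bS_{\lambda^a}(K^{\beta(ha)}), \]
and then group the Schur factors by vertex. The weight space $\SI(Q,\beta)_\sigma$ becomes a direct sum, indexed by tuples $(\lambda^a)_{a \in Q_1}$, of the spaces of $\SL(\beta(x))$-invariants of $(\det)^{\sigma(x)}$-weight in the vertex factor $\bigotimes_{a\colon ha=x} \bS_{\lambda^a}(K^{\beta(x)}) \otimes \bigotimes_{a\colon ta=x} \bS_{\lambda^a}(K^{\beta(x)})^{*}$. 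This identifies $\SI(Q,\beta)_\sigma$ combinatorially with a sum of Littlewood--Richardson-type multiplicities, and reduces the task to exhibiting a distinguished basis built from determinantal semi-invariants.

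For each nonzero summand, specified by a tuple $(\lambda^a)$, I would construct a representation $V$ of $Q$ whose dimension vector $\alpha$ is dictated by the $\lambda^a$ (so that automatically $\langle \alpha, \beta \rangle = 0$), and show that $c^V$ projects nontrivially onto that summand. The canonical resolution makes $d^V_W$ into a block matrix whose arrow-blocks depend polynomially on the $W_a$, and expanding $\det d^V_W$ along the arrow blocks should produce Schur-type polynomials in the $W_a$ whose Cauchy expansion matches the prescribed shapes $(\lambda^a)$.

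To complete the argument I would proceed by induction, for instance on $\sum_{x \in Q_0} \beta(x)$ or on the number of arrows of $Q$, with the base case being the generalized Kronecker quiver (two vertices joined by parallel arrows). On the Kronecker quiver the first fundamental theorem for $\GL_n$ on a space of rectangular matrices describes $\SI(Q,\beta)$ directly, and the $c^V$'s are classical minors. The inductive step would use reduction moves --- restricting to a subquiver when $\beta$ is not sincere, contracting an arrow adjacent to a vertex of minimal dimension, or applying a reflection at a source/sink --- that replace $(Q,\beta)$ by a smaller pair while, hopefully, preserving the determinantal-span property.

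The main obstacle is precisely this inductive step. One needs to check that a spanning family of $c^V$'s on the smaller quiver pulls back to a spanning family on the original, that no semi-invariants are lost in the reduction, and that the weight bookkeeping --- the passage from an arbitrary character $\sigma$ to the constrained shapes $\sigma^\circ_\alpha$ coming from dimension vectors --- remains compatible throughout. Establishing the linear independence of the constructed $c^V$'s within each Cauchy summand, and handling quivers with multiple arrows between the same pair of vertices uniformly with those with long paths, are the delicate technical points where the bulk of the work lies.
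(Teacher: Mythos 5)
The paper does not give its own proof of this theorem: it cites Theorem~1 of Derksen--Weyman \cite{saturation} and then closely mirrors that proof's architecture when establishing its symmetric analogue (Theorem~\ref{pfaffians}). Your proposal gets part of the skeleton right --- the Cauchy decomposition of the coordinate ring, and the generalized Kronecker quiver as the base case --- but the inductive engine you describe is not the one that makes the argument go, and the moves you substitute in its place would not work.

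The two reductions that actually carry the Derksen--Weyman proof (and are reproduced in the proof of Theorem~\ref{pfaffians}) are: (i) adjoin a new source $x_-$ and sink $x_+$, with $|\sigma(x)|$ arrows from $x_-$ (resp.\ into $x_+$) to each vertex where $\sigma$ is positive (resp.\ negative), thereby normalizing the weight to be $1$ at the unique source and $0$ elsewhere; and (ii) delete a weight-$0$ intermediate vertex $x$ and replace every length-two path through $x$ by a new arrow, composing the corresponding maps. Surjectivity of the induced map on semi-invariants in step (ii) is exactly the first fundamental theorem for $\GL$ at the vertex $x$, and one must separately check that determinantal semi-invariants pull back to determinantal semi-invariants. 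The induction is therefore on the number of vertices --- the second move \emph{increases} the number of arrows, so induction on the arrow count, which you suggest, fails. The moves you propose --- deleting vertices where $\beta$ vanishes, ``contracting an arrow,'' and reflection functors --- are not what is used; reflection functors in particular belong to Aragona's alternative proof for tame quivers, which the paper explicitly contrasts with the Derksen--Weyman route it follows. Your other suggested step, constructing for each Cauchy summand $(\lambda^a)$ a representation $V$ whose $c^V$ ``projects nontrivially onto it,'' is not how the argument proceeds and is not known to be achievable directly: there is no clean bijection between Schur-tuples and representations of a fixed dimension vector, which is precisely why the quiver reductions are necessary. Finally, linear independence of the $c^V$ is neither asserted nor needed --- the theorem is a spanning statement, and within a fixed weight space the $c^V$ for various $V$ of the corresponding dimension vector are generally highly dependent.
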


\subsection{Symmetric quivers.} 

Suppose that the characteristic is different from 2. 

Let $\tau$ denote an involution $\tau \colon Q_0 \to Q_0$ and $\tau
\colon Q_1 \to Q_1$ such that $t\tau(a) = \tau(ha)$ and $h\tau(a) =
\tau(ta)$. We pick a sign function defined on the $\tau$-fixed
vertices and arrows $s \colon Q_0^\tau \cup Q_1^\tau \to \{\pm
1\}$. The data $(Q_0, Q_1, \tau, s)$ is a {\bf symmetric quiver}. If
$Q$ is a symmetric quiver, then $Q^\circ$ denotes the underlying
quiver. Our definition of symmetric quiver is called a {\bf signed
  quiver} in \cite{signed}. For $i=0,1$, we can partition
\[
Q_i = Q_i^+ \cup Q_i^\tau \cup Q_i^-,
\]
such that $Q_i^\tau$ is the fixed point set of $\tau$ and $Q_i^- =
\tau(Q_i^+)$.
Also, we set $Q_0^{\tau_{\pm}}$ to be the set of $x \in Q_0^\tau$ such
that $s(x) = \pm 1$, so $Q_0^\tau = Q_0^{\tau_+} \cup
Q_0^{\tau_-}$. 

\subsubsection{Representations.}

A dimension vector $\beta \in \N^{Q_0}$ is {\bf symmetric} if
$\beta(\tau(x)) = \beta(x)$ for all $x \in Q_0$ and $\beta(x)$ is even
whenever $x \in Q_0^{\tau_+}$. Given a symmetric dimension vector
$\beta$, a {\bf symmetric representation} $V$ of dimension $\beta$ is
defined by assigning a vector space $V(x)$ of dimension $\beta(x)$ to
each $x \in Q_0^+ \cup Q_0^\tau$. For $x \in Q_0^+$, we assign the
dual vector space $V(\tau(x)) = V(x)^*$ to $\tau(x)$. For $x \in
Q_0^{\tau_+}$, we endow $V(x)$ with a nondegenerate symmetric bilinear
form, and for $x \in Q_0^{\tau_-}$, we endow $V(x)$ with a
nondegenerate skew-symmetric bilinear form. In particular, the form
gives an isomorphism $J_x \colon V(x) \to V(x)^*$ that we fix. Note
that $J_x^{-1} = J_x^* = \eps J_x$ where $x \in Q_0^{\tau_\eps}$. For
each $a \in Q_1$, we assign a linear map $V_a \colon V(ta) \to V(ha)$
such that
\begin{compactenum}
\item If $a \in Q_1^{\tau_\eps}$, then $V_a = \eps V_a^*$ where we are
  identifying $V(ta)$ and $V(ta)^{**}$ in the canonical way.
\item If $a \in Q_1^+$ and $ta, ha \notin Q_0^\tau$, then $V_a =
  V_{\tau(a)}^*$.
\item If $a \in Q_1^+$ and $ha \in Q_0^\tau$, then $V_{\tau(a)} =
  V_a^* J_{ha}$, and similarly if instead $ta \in Q_0^\tau$.
\end{compactenum}

~

We define the {\bf symmetric representation variety}
\begin{align*}
  \SRep(Q, \beta) &= \bigoplus_{a \in Q_1^+} \hom(K^{\beta(ta)},
  K^{\beta(ha)}) \oplus \bigoplus_{\substack{a \in Q_1^{\tau_+} }}
  \Sym^2(K^{\beta(ta)})^* \oplus \bigoplus_{\substack{a \in
      Q_1^{\tau_-}}} \bigwedge^2 (K^{\beta(ta)})^*
\end{align*}
and the corresponding groups
\begin{align*} 
  {\bf G}(Q, \beta) &= \prod_{x \in Q_0^+} \GL(K^{\beta(x)}) \times
  \prod_{\substack{x \in Q_0^\tau \\ s(x) = 1}} {\bf O}(K^{\beta(x)})
  \times \prod_{\substack{x \in Q_0^\tau \\ s(x) = -1}} \Sp(K^{\beta(x)}) \\
  \SG(Q, \beta) &= \prod_{x \in Q_0^+} \SL(K^{\beta(x)}) \times
  \prod_{\substack{x \in Q_0^\tau \\ s(x) = 1}} \SO(K^{\beta(x)})
  \times \prod_{\substack{x \in Q_0^\tau \\ s(x) = -1}}
  \Sp(K^{\beta(x)}).
\end{align*}
These groups act on the symmetric representation variety as follows
(here we interpret $g_{\tau(x)} = (g_x^{-1})^t$ where the transpose is
with respect to the natural bilinear form on $V(x) \oplus V(\tau(x))$
for $x \in Q_0^+$):
\[
(g_x)_{x \in Q_0^+ \cup Q_0^\tau} \cdot (\phi_a)_{a \in Q_1^+ \cup
  Q_1^\tau} = ( g_{ha} \phi_a g_{ta}^{-1} )_{a \in Q_1^+ \cup
  Q_1^\tau}. 
\]
We have a natural identification ${\bf G}(Q, \beta) \subset
\GL(Q^\circ, \beta)$ such that $\SRep(Q, \beta)$ is a ${\bf G}(Q,
\beta)$-invariant subvariety of $\Rep(Q^\circ, \beta)$. The 
symmetric representations of $Q$ correspond to the $K$-points of
$\SRep(Q, \beta)$, and isomorphism of two symmetric representations is
defined as being in the same ${\bf G}(Q, \beta)$-orbit.

\subsubsection{Semi-invariants.} 

As in Section~\ref{section:quiverSI}, when $Q$ is a symmetric quiver
and $\beta$ is a symmetric dimension vector, we define the ring of
{\bf symmetric semi-invariants}
\begin{align*}
  \SSI(Q, \beta) &= K[\SRep(Q, \beta)]^{\SG(Q, \beta)},
\end{align*}
which has a grading by characters of ${\bf G}(Q,\beta)$. The
determinant is the only nontrivial character of the orthogonal group
and has order 2, while the symplectic group has no nontrivial
characters. So we can identify weights with elements of $\Z^{Q_0^+}
\times (\Z/2)^{Q_0^{\tau_+}}$. Now we describe the class of
semi-invariants that we study in this article.

Let $V$ be a representation of $Q^\circ$ of dimension $\alpha$. Recall
the definition of the polynomial function $c^V$ on $\Rep(Q^\circ,
\beta)$ given in Section~\ref{section:quiverSI}. We are interested in
the restriction of $c^V$ to $\SRep(Q, \beta)$. The weight
$\sigma_\alpha \in \Z^{Q_0^+} \times (\Z/2)^{Q_0^{\tau_+}}$ is defined
by
\begin{align} \label{weight}
  \sigma_\alpha(x) = 
  \begin{cases} \langle \alpha, \eps_x - \eps_{\tau(x)}
    \rangle & \text{if } x \in Q_0^+ \\
    \langle \alpha, \eps_x \rangle & \text{if } x \in
    Q_0^{\tau_+} \end{cases}.
\end{align}

\begin{remark} \label{remark:integerweights} While $\langle \alpha,
  \eps_x \rangle$ is an element of $\Z/2$ when $x \in Q_0^{\tau_+}$,
  it will be convenient for us later to think of it as an integer.
\end{remark}

If the restriction of the polynomial function $c^V$ to $\SRep(Q,
\beta)$ is the square of another polynomial function, we define
$\pf^V$ to be a square root of this function. Since $\SI(Q, \beta)$ is
a domain, this is well-defined up to a choice of nonzero scalar since
$x^2 = y^2$ implies that $x = \pm y$. Since $\SG(Q, \beta)$ has no
nontrivial characters, $\pf^V$ is also a semi-invariant. We call this
a {\bf Pfaffian semi-invariant}. The weight of $\pf^V$ is
$\frac{1}{2}\sigma_\alpha$: this makes sense for $x \in Q_0^+$; for $x
\in Q_0^{\tau_+}$, we interpret $\frac{1}{2} \sigma_\alpha(x)$, which
must be an integer, to be the residue of $\frac{1}{2} \langle \alpha,
\eps_x \rangle$ in $\Z/2$. Since $c^{V \oplus V} = (c^V)^2$, one can
always interpret a determinantal semi-invariant as a Pfaffian
semi-invariant: $c^V = \pf^{V \oplus V}$.


\begin{theorem} \label{pfaffians} Assume that the characteristic of
  $K$ is different from $2$. Let $Q$ be a symmetric quiver without
  oriented cycles and let $\beta$ be a symmetric dimension vector for
  $Q$. The space of symmetric semi-invariants $\SSI(Q, \beta)$ is
  linearly spanned by the Pfaffian semi-invariants $\pf^V$ such that
  $\langle \dim V, \beta \rangle = 0$. The weight of $\pf^V$ is
  $\frac{1}{2}\sigma_{\dim V}$.
\end{theorem}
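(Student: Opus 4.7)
My strategy is to transport the Derksen--Weyman spanning theorem for the underlying quiver $Q^\circ$ to the symmetric setting by a restriction-and-square-root argument. I begin with the weight claim. Since each factor of $\SG(Q,\beta)$ ($\SL$, $\SO$, or $\Sp$) has trivial determinant, the inclusion $\SG(Q,\beta) \subset \GL(Q^\circ,\beta)$ actually lands inside $\SL(Q^\circ,\beta)$. Hence every $c^V \in \SI(Q^\circ,\beta)$ restricts to a $\SG(Q,\beta)$-semi-invariant on $\SRep(Q,\beta)$, and a direct character computation---comparing $G(Q,\beta)/\SG(Q,\beta)$ with $\GL(Q^\circ,\beta)/\SL(Q^\circ,\beta)$ along the embedding dictated by $g_{\tau(x)} = (g_x^{-1})^t$ for $x \in Q_0^+$---identifies the weight of $c^V|_{\SRep(Q,\beta)}$ as $\sigma_{\dim V}$ from \eqref{weight}. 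Its Pfaffian square root $\pf^V$, whenever it exists, then automatically has weight $\tfrac{1}{2}\sigma_{\dim V}$.

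For the main spanning claim, I would invoke two reductivity steps. First, since $\SG(Q,\beta)$ is reductive and $\SRep(Q,\beta) \subset \Rep(Q^\circ,\beta)$ is a closed $\SG$-stable subvariety, the restriction map
\[
K[\Rep(Q^\circ,\beta)]^{\SG(Q,\beta)} \twoheadrightarrow \SSI(Q,\beta)
\]
is surjective. Second, to control the source, I would exploit that $G(Q,\beta)$ is the fixed subgroup of an involution on $\GL(Q^\circ,\beta)$, presenting $(\GL(Q^\circ,\beta), G(Q,\beta))$ as a product of classical symmetric pairs. The representation theory of such pairs identifies every $\SG$-isotypic component of $K[\Rep(Q^\circ,\beta)]$ with a combination of $\SL(Q^\circ,\beta)$-semi-invariants---which by Derksen--Weyman are the $c^V$ with $\langle \dim V, \beta \rangle = 0$---and, in those weight components where the character does not lift integrally to $\GL(Q^\circ,\beta)/\SL(Q^\circ,\beta)$, their square roots. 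These square roots are the genuine Pfaffians: the classical identity $\det = \Pf^2$ for skew-symmetric matrices---and more generally the fact that the matrix $d^V_W$ acquires a skew-type structure on $\SRep(Q,\beta)$ via the identifications $V(\tau(x)) \cong V(x)^*$ and the forms $J_x$---is precisely what forces $c^V|_{\SRep(Q,\beta)}$ to be a perfect square in the domain $K[\SRep(Q,\beta)]$ in the first place.

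The chief obstacle is this second reductivity step: $\SG$-invariance on the ambient $\Rep(Q^\circ,\beta)$ is a priori genuinely weaker than $\SL(Q^\circ,\beta)$-semi-invariance, so it is not automatic that every $\SG$-invariant lifts from restrictions of determinantal semi-invariants. Making the symmetric-pair reduction precise requires a careful bookkeeping of characters---in particular, understanding when a component of weight $\chi$ on $\SSI(Q,\beta)$ requires a $\tfrac{1}{2}$-integer lift to $\SI(Q^\circ,\beta)$, and thus a true Pfaffian---together with an explicit analysis of $d^V_W$ on $\SRep(Q,\beta)$ pinning down exactly when $c^V|_{\SRep}$ acquires its square root. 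I would attack the latter by, given a nonzero $f \in \SSI(Q,\beta)_\chi$, constructing a representation $V$ of $Q^\circ$ with $\langle \dim V, \beta\rangle = 0$ and $c^V|_{\SRep(Q,\beta)} \in \{f, f^2\}$, via an iterative reduction on $\beta$ in the spirit of the Schofield and Derksen--Weyman arguments referenced in the paper.
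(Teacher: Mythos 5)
Your proposal diverges substantially from the paper's argument, and the divergence exposes a real gap. The paper proves the spanning claim by a three-step reduction internal to the category of symmetric quivers: (i) enlarge $Q$ by one pair of $\tau$-conjugate vertices so that the weight is $1$ at a unique sink and $0$ elsewhere; (ii) repeatedly delete $\tau$-orbits of weight-$0$ vertices, using the first fundamental theorems of classical invariant theory (Theorem~\ref{theorem:fund} and Lemma~\ref{fund:tensors}) to show the restriction maps on semi-invariants are surjective and send Pfaffians to Pfaffians; (iii) treat the two-vertex generalized Kronecker base case by explicit plethysm computations with $\Sym^2$ and $\bigwedge^2$. Your proposal instead tries to work entirely on the ambient space $\Rep(Q^\circ,\beta)$ via a symmetric-pair decomposition, and this does not hold together.

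The most serious problem is your second ``reductivity step.'' You want to present $K[\Rep(Q^\circ,\beta)]^{\SG(Q,\beta)}$ as a combination of the determinantal $c^V$ and, in certain weight components, ``their square roots.'' But a square root of $c^V$ does not exist in $K[\Rep(Q^\circ,\beta)]$: the perfect-square phenomenon occurs only after restriction to $\SRep(Q,\beta)$, where $d^V_W$ acquires a skew/symmetric block structure (the determinant of a generic square matrix is not a square; only the determinant of a skew matrix is). So the objects you propose to span the source of your surjection do not live in that source. Beyond this, $K[\Rep(Q^\circ,\beta)]^{\SG(Q,\beta)}$ is vastly larger than $\SI(Q^\circ,\beta) = K[\Rep(Q^\circ,\beta)]^{\SL(Q^\circ,\beta)}$, and you give no mechanism for controlling the extra invariants; the appeal to classical symmetric pairs is left at the level of a heading. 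Your own closing paragraph---proposing ``an iterative reduction on $\beta$ in the spirit of Schofield and Derksen--Weyman''---concedes the need for exactly the reduction the paper actually performs, which is where the real work is.

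A smaller but genuine issue: your first step invokes linear reductivity of $\SG(Q,\beta)$ to get surjectivity of restriction, which requires characteristic $0$, whereas Theorem~\ref{pfaffians} is stated for any characteristic $\ne 2$. The paper avoids this by invoking the first fundamental theorems (and Lopatin's result) directly, all of which hold in characteristic $\ne 2$. Your weight computation identifying $\sigma_{\dim V}$ from \eqref{weight} and deducing the weight $\tfrac12\sigma_{\dim V}$ for $\pf^V$ is correct and matches the paper, but the spanning claim---the substance of the theorem---is not established by your outline.
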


We remark that Theorem~\ref{pfaffians} was proven in the case of
finite type and tame quivers without oriented cycles in the paper
\cite{aragona} along with a determination for when the determinantal
semi-invariants admit square roots. The technique of proof in that
paper involves extending the idea of reflection functors, but our
approach will follow the ideas in \cite{saturation} closely. Hence
there will be three steps. The first step is to reduce to the case of
a symmetric quiver that has a unique sink and source and such that the
weight is 1 at the sink, and 0 elsewhere. The second step is to show
how one can remove vertices of weight 0. The third and last step is to
handle the case of a symmetric quiver with two vertices. The proof
will be given in Section~\ref{sec:proof} after we state some necessary
background from invariant theory.

\subsection{Some results from invariant
  theory.} \label{invariantsection}

\subsubsection{Fundamental invariants.}

We recall the first fundamental theorems of invariant theory for the
classical groups.

\begin{theorem} \label{theorem:fund} Let $V$ be a vector space of
  dimension $n$ over a field $K$.
  \begin{compactenum}[\rm (a)]
  \item \label{fund:general} We have
    \begin{align*} 
      K[V^{\oplus p} \oplus (V^*)^{\oplus q}]^{\GL(V)} = K[u_{i,j}],
    \end{align*}
    where $u_{i,j}$ $(1 \le i \le p$, $1 \le j \le q)$ is the function
    defined by $(v, f) \mapsto f_j(v_i)$. 

  \item \label{fund:symplectic} Now assume that $V$ has a
    skew-symmetric nondegenerate bilinear form $(,)$. Then
    \begin{align*} 
      K[V^{\oplus p}]^{\Sp(V)} = K[u_{i,j}]
    \end{align*}
    where $u_{i,j}$ $(1 \le i, j \le p)$ is the function defined by
    $(v_1, \dots, v_p) \mapsto (v_i, v_j)$. 

  \item \label{fund:orthogonal} Now assume that the characteristic of
    $K$ is different from $2$ and that $V$ has a symmetric
    nondegenerate bilinear form $(,)$. Then
    \begin{align*} 
      K[V^{\oplus p}]^{{\bf O}(V)} = K[u_{i,j}]
    \end{align*}
    where $u_{i,j}$ $(1 \le i, j \le p)$ is the function defined by
    $(v_1, \dots, v_p) \mapsto (v_i, v_j)$. 

  \end{compactenum}
\end{theorem}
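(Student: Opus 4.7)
All three statements are instances of the classical First Fundamental Theorem, and my plan is to treat them uniformly by combining polarization/restitution with a multilinear invariant calculation. The first step is the reduction to multilinear degree: any homogeneous polynomial invariant on the given direct sum space polarizes to a multi-affine (hence multilinear) invariant on a larger direct sum of copies of $V$ and $V^*$, from which the original polynomial is recovered by diagonal restitution. This step is clean in characteristic $0$; in positive characteristic one instead invokes the more delicate arguments of \cite{lopatin} (which the acknowledgements allude to), and it is here that the main subtlety of the theorem lives.

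For part (\ref{fund:general}), a multilinear $\GL(V)$-invariant on $V^{\otimes p} \otimes (V^*)^{\otimes q}$ is the same as an element of $\hom_{\GL(V)}(V^{\otimes q}, V^{\otimes p})$. Considering weights under the central torus $K^\times \subset \GL(V)$ forces this space to vanish unless $p = q$, and Schur--Weyl duality then identifies it with the image of $K[\SS_p]$ in $\End(V^{\otimes p})$. A permutation $\sigma \in \SS_p$ transports under the adjunction to the multilinear form $(v_\bullet, f_\bullet) \mapsto \prod_i f_i(v_{\sigma(i)})$, which is exactly a monomial in the generators $u_{i,j}$, so restitution recovers $K[u_{i,j}]$ as the full invariant ring.

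For parts (\ref{fund:symplectic}) and (\ref{fund:orthogonal}), the nondegenerate bilinear form gives a $G$-equivariant identification $V \cong V^*$, so a multilinear invariant on $V^{\otimes p}$ is the same as a $G$-equivariant map $V^{\otimes p} \to K$. The Brauer--Weyl description of this space asserts that it is spanned by perfect matchings of $\{1,\dots,p\}$ (forcing $p$ even), each matching $\{\{i_1,j_1\},\dots,\{i_k,j_k\}\}$ yielding the invariant $(v_1,\dots,v_p) \mapsto \prod_\ell (v_{i_\ell}, v_{j_\ell})$, which is a monomial in the pairing functions $u_{i,j}$. The restriction $\operatorname{char}(K)\ne 2$ in (\ref{fund:orthogonal}) enters because both ${\bf O}(V)$ and the symmetric form behave degenerately in characteristic $2$; for symplectic invariants the description remains valid over any field, again modulo the characteristic-dependent polarization step handled by \cite{lopatin}.

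The main obstacle I expect is therefore not the multilinear calculation, which is very classical and essentially a character/centralizer computation, but the polarization step in positive characteristic: a multi-homogeneous invariant of multi-degree $(d_1,\dots,d_r)$ need not literally be the restitution of a multilinear invariant once some $d_i$ exceeds $\operatorname{char}(K)$. The naive argument therefore only yields the result in characteristic $0$ or in sufficiently large characteristic, and a uniform characteristic-free proof of (\ref{fund:symplectic}) requires the additional work of \cite{lopatin}.
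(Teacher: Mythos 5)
The paper does not actually prove this theorem; it simply cites Procesi \cite[\S\S 13.6.3, 13.8.5]{procesi} for all three parts. Your sketch, by contrast, outlines the classical characteristic-zero proof --- polarization to multilinear degree, then Schur--Weyl duality for $\GL(V)$ and the Brauer/perfect-matchings description of $\hom_G(V^{\otimes p}, K)$ for $\Sp(V)$ and ${\bf O}(V)$ --- which is indeed the substance of the cited sections. As an outline it is correct, and more informative than the paper's one-line citation.

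Two caveats are worth flagging. First, as you anticipate, the polarization--restitution step collapses in small positive characteristic, so your argument as written only establishes the theorem in characteristic $0$ (or characteristic exceeding all degrees involved). Since parts (a) and (b) are asserted over an arbitrary field and part (c) only assumes $\operatorname{char} K \ne 2$, a genuine proof must fall back on one of the characteristic-free routes (Cayley's $\Omega$-process, straightening/standard monomial theory, or the De Concini--Procesi treatment), which are structurally different from the polarization argument and are what a reference like \cite{procesi} ultimately supplies. Second, your appeal to \cite{lopatin} is misplaced: in this paper that reference is invoked only to extend Lemma~\ref{fund:tensors} --- the variant with $\bigwedge^2 V^*$ and $\Sym^2 V^*$ summands --- to positive characteristic, not to handle Theorem~\ref{theorem:fund}, whose positive-characteristic content is already contained in the cited sections of \cite{procesi}.
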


\begin{proof} For \eqref{fund:general}, \eqref{fund:symplectic}, and
  \eqref{fund:orthogonal}, respectively, see \cite[\S
  13.6.3]{procesi}, \cite[\S 13.8.5, Theorem 1]{procesi}, and \cite[\S
  13.8.5, Theorem 2]{procesi}.
\end{proof}

In the quiver context, \eqref{fund:general} implies the following
fact. Given a representation variety of a quiver of the form $K^p
\xrightarrow{A} K^n \xrightarrow{B} K^q$, we can interpret $A$ as $p$
vectors of $K^n$ and $B$ as $q$ covectors. Hence the
$\GL(K^n)$-invariants of $K[A,B]$ is just $K[AB]$, the subring
generated by the entries of the product $AB$. Both
\eqref{fund:orthogonal} and \eqref{fund:symplectic} have similar
interpretations for symmetric representations that we will be using
later.

We will also require the following extension of
Theorem~\ref{theorem:fund}\eqref{fund:general}.

\begin{lemma} \label{fund:tensors} Let $V$ be a vector space over a
  field $K$ of characteristic different from $2$. Define
  \[
  W_{p,q,r,s} = V^{\oplus p} \oplus (V^*)^{\oplus q} \oplus
  (\bigwedge^2 V^*)^{\oplus r} \oplus (\Sym^2 V^*)^{\oplus s}.
  \]
  Then
  \begin{align*} 
    K[W_{p,q,r,s}]^{\GL(V)} = K[u_{i,j}, \eps_{i,j,k}, \eta_{i,j,k}],
  \end{align*}
  where 
  \begin{compactitem}
  \item $\eps_{i,j,k}$ $(1 \le i < j \le p$, $1 \le k \le r)$ is the
    function defined by $(v, f, \xi, \zeta) \mapsto \xi_k(v_i,
    v_j)$,
  \item $\eta_{i,j,k}$ $(1 \le i \le j \le p$, $1 \le k \le s)$ is the
    function defined by $(v, f, \xi, \zeta) \mapsto \zeta_k(v_i,
    v_j)$, and
  \item $u_{i,j}$ $(1 \le i \le p$, $1 \le j \le q)$ is the function
    defined by $(v, f, \xi, \zeta) \mapsto f_j(v_i)$.
  \end{compactitem}
\end{lemma}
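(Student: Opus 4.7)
The plan is to reduce the statement to Theorem~\ref{theorem:fund}\eqref{fund:general} by embedding each bilinear-form summand into $V^* \otimes V^*$ and then invoking the first fundamental theorem for $\GL(V)$ on mixed tensor spaces. Since the characteristic is different from $2$, we have the $\GL(V)$-equivariant decomposition $V^* \otimes V^* = \Sym^2 V^* \oplus \bigwedge^2 V^*$, so $W_{p,q,r,s}$ is a $\GL(V)$-equivariant direct summand of
\[
\widetilde{W} := V^{\oplus p} \oplus (V^*)^{\oplus q} \oplus (V^* \otimes V^*)^{\oplus (r+s)},
\]
and the restriction map $K[\widetilde{W}]^{\GL(V)} \twoheadrightarrow K[W_{p,q,r,s}]^{\GL(V)}$ is surjective. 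So it suffices to show that $K[\widetilde{W}]^{\GL(V)}$ is generated as a $K$-algebra by the pairings $u_{i,j}(v,f,B) = f_j(v_i)$ together with the bilinear evaluations $b_{i,j,k}(v,f,B) = B_k(v_i,v_j)$.

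To handle that, we would reduce to multilinear invariants by polarization: any multi-homogeneous invariant on $\widetilde{W}$ can be recovered from a multilinear invariant on a suitable expansion $V^{\oplus E} \oplus (V^*)^{\oplus F} \oplus (V^* \otimes V^*)^{\oplus G}$. The multilinear $\GL(V)$-invariants on such a space live in
\[
\bigl((V^*)^{\otimes E} \otimes V^{\otimes F} \otimes (V\otimes V)^{\otimes G}\bigr)^{\GL(V)} = \bigl((V^*)^{\otimes E} \otimes V^{\otimes(F+2G)}\bigr)^{\GL(V)},
\]
and Weyl's first fundamental theorem for tensor spaces (equivalently the surjection of $KS_{F+2G}$ onto $\End_{\GL(V)}(V^{\otimes (F+2G)})$ coming from Schur--Weyl duality) says this space is zero unless $E = F + 2G$ and is otherwise spanned by the complete contractions indexed by bijections $\sigma \colon [E] \to [F+2G]$. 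Unwinding: each such $\sigma$ pairs every $V^*$-slot (coming from some $v_i$) with a $V$-slot, the latter originating either from an $f_j$ --- contributing $u_{i,j}$ --- or from one of the two slots of some $B_k$; since both slots of each $B_k$ must be paired with vectors $v_i, v_{i'}$, they jointly contribute $B_k(v_i,v_{i'})$. Thus every multilinear invariant is a product of $u$'s and $b$'s, and restitution then propagates this to a set of generators for all of $K[\widetilde{W}]^{\GL(V)}$.

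Restricting finally to $W_{p,q,r,s}$, each $B_k$ becomes either an antisymmetric $\xi_k$ or a symmetric $\zeta_l$, so $b_{i,j,k}$ restricts to $\eps_{i,j,k}$ or to $\eta_{i,j,l}$; the (anti-)symmetry lets us restrict the index range to $i<j$ or $i \le j$, matching the lemma's list exactly.

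The main technical obstacle is the polarization step in positive characteristic, where naive polarization fails once degrees exceed the characteristic. In characteristic $0$ this is routine. In characteristic $\ne 2$ one would either invoke the characteristic-free Schur-algebra formulation of the FFT or lift to $\Q$ and specialize; as the acknowledgement indicates, the reference \cite{lopatin} provides precisely this extension to positive characteristic $\ne 2$, and that is the nontrivial ingredient on which the argument hinges.
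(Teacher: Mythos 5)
Your proof is correct and follows essentially the same route as the paper: decompose $V^*\otimes V^*$ into its symmetric and antisymmetric parts (characteristic $\ne 2$), reduce via polarization and restitution to multilinear invariants, apply the first fundamental theorem for $\GL(V)$ to identify these as spans of complete contractions $u_w$, unwind the contractions into products of $u_{i,j}$ and bilinear evaluations, and cite Lopatin for the positive characteristic case. The one small organizational improvement you make is to observe up front that $W_{p,q,r,s}$ is a $\GL(V)$-equivariant \emph{direct summand} of $\widetilde{W}$, so the restriction of invariant rings is a split surjection; the paper instead polarizes first, embeds the multilinear expansion into a tensor-square space, and appeals to reductivity to see that the restricted $u_w$ span -- your splitting argument is marginally cleaner and avoids invoking reductivity there, but both reach the same place.
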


\begin{proof}
  This follows from \cite[Theorem 1]{lopatin}, but we will give a
  simpler proof using classical invariant theory in the case when the
  characteristic of $K$ is 0.
  
  For the definitions of full polarization and restitution, we refer
  to \cite[\S 3.2.2]{procesi}. First we note that the invariants of
  $K[W_{p,q,r,s}]$ are graded by $\N^{p+q+r+s}$. Given a
  multihomogeneous invariant, its full polarization is a multilinear
  invariant function on $W = W_{p',q',r',s'}$ for some $p',q',r',s'$
  depending on the degree of the invariant. We have an inclusion of
  $\GL(V)$-representations $W \subset W' = V^{\oplus p'} \oplus
  (V^*)^{\oplus q'} \oplus (V^* \otimes V^*)^{\oplus (r'+s')}$. The
  space of multilinear invariants of $W'$ is $(V^{\otimes p'} \otimes
  (V^*)^{\otimes (q'+2r'+2s')})^{\GL(V)}$. We know from
  Theorem~\ref{theorem:fund}\eqref{fund:general} that $p' =
  q'+2r'+2s'$ and that the invariants are linear combinations of the
  monomials $u_w = u_{1, w(1)} \cdots u_{p', w(p')}$ where $w$ is in
  the symmetric group $\Sigma_{p'}$. Since we are working in
  characteristic 0 and $\GL(V)$ is reductive, the restrictions of the
  $u_w$ from $W'$ to $W$ generate the multilinear
  $\GL(V)$-invariants. Finally, the restitution of $u_w$ to a
  multihomogeneous invariant on $W_{p,q,r,s}$ is a product of
  $u_{i,j}$, $\eps_{i,j,k}$, $\eta_{i,j,k}$, so our claim is proven
  since every multihomogeneous invariant is the restitution of some
  multilinear invariant. 
\end{proof}

\subsubsection{Schur functors.}

We recall some facts about the representation theory of the general
linear group. For this section, we make no assumptions on the
characteristic of $K$, except for the last two results.

Let $V$ and $W$ be vector spaces, and let $\lambda = (\lambda_1 \ge
\lambda_2 \ge \cdots)$ be a partition of nonnegative integers. The
notation $\ell(\lambda)$ denotes the largest $n$ such that $\lambda_n
\ne 0$, and $\lambda'$ denotes the conjugate partition of
$\lambda$. For $c \in \N$, we define $c\lambda = (c\lambda_1,
c\lambda_2, \dots)$. 

Let $\bS_\lambda(V)$ be the {\bf Schur functor}, which is an
irreducible representation of $\GL(V)$ with highest weight
$\lambda$. (This is denoted $\bL_{\lambda'}(V)$ in \cite[\S
2.1]{weyman}.) Given two representations $U$ and $U'$, we will write
$U \sim U'$ if each representation possesses a filtration such that
the associated graded representations are isomorphic. If $K$ has
characteristic 0, $\sim$ is the same as isomorphism. 

\begin{proposition} We have the following decompositions:
\begin{align}
  \label{eqn:sym2plethysm} \DS \Sym^m(\Sym^2 V) &\sim
  \bigoplus_{|\lambda| = m} \bS_{2\lambda}(V),\\
  \label{eqn:alt2plethysm} \DS \Sym^m(\bigwedge^2 V) &\sim
  \bigoplus_{|\lambda| = m} \bS_{(2\lambda)'}(V),\\
  \label{eqn:cauchy} \DS \Sym^m(V \otimes W) &\sim
  \bigoplus_{|\lambda| = m} \bS_\lambda(V) \otimes \bS_\lambda(W).
\end{align}
where the first two identities are $\GL(V)$-equivariant, and the last
one is $\GL(V) \times \GL(W)$-equivariant.
\end{proposition}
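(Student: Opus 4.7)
The plan is to establish the three decompositions first in characteristic $0$, where $\sim$ is an honest isomorphism, via Schur--Weyl duality, and then to upgrade the resulting identifications to filtrations in arbitrary characteristic using universal constructions defined over $\Z$.

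In characteristic $0$ I would proceed as follows. For \eqref{eqn:cauchy}, write $(V \otimes W)^{\otimes m} \cong V^{\otimes m} \otimes W^{\otimes m}$ as a $\GL(V) \times \GL(W) \times \SS_m$-module, with $\SS_m$ acting diagonally. Schur--Weyl gives $V^{\otimes m} \cong \bigoplus_\lambda M_\lambda \otimes \bS_\lambda(V)$ and similarly for $W$, where $M_\lambda$ ranges over the irreducible $\SS_m$-modules. Taking $\SS_m$-invariants recovers $\Sym^m(V \otimes W)$, and Schur's lemma (using self-duality of $M_\lambda$) gives $(M_\lambda \otimes M_\mu)^{\SS_m}$ equal to $K$ when $\lambda = \mu$ and $0$ otherwise, which yields the Cauchy decomposition. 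For \eqref{eqn:sym2plethysm} and \eqref{eqn:alt2plethysm}, I would realize $\Sym^m(\Sym^2 V)$ and $\Sym^m(\bigwedge^2 V)$ as invariants in $V^{\otimes 2m}$ under the wreath product $H = \SS_m \wr \SS_2 \subset \SS_{2m}$: plain invariants in the symmetric case, and invariants twisted by the sign on the $\SS_2$ factors in the alternating case. Schur--Weyl then reduces the statement to computing the dimension of the appropriate $H$-isotypic component in each Specht module $M_\lambda$, and by Frobenius reciprocity this is governed by the plethysms $h_m[h_2] = \sum_\mu s_{2\mu}$ and $h_m[e_2] = \sum_\mu s_{(2\mu)'}$, which are Littlewood's classical ``even row / even column'' identities.

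To pass from $\cong$ in characteristic $0$ to the filtration relation $\sim$ over an arbitrary field, the plan is to replace the Schur--Weyl projectors (which are unavailable in positive characteristic) by explicit $\GL$-stable filtrations built from the universal Schur and divided-power functors of Akin--Buchsbaum--Weyman. For each of the three left-hand sides one constructs over $\Z$ a filtration of free $\Z$-modules whose associated graded pieces are precisely the Schur functors on the right, and then specializes to $K$. The main obstacle is exactly this last step: in characteristic $p$ the tensor and symmetric powers are not semisimple and Schur functors no longer split off as summands, so the filtrations must be produced by a straightening-law argument rather than by idempotent projection. For our purposes it is enough to quote \cite[\S 2.1]{weyman}, where these filtrations are constructed in full generality.
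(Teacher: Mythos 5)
The paper gives no argument of its own here: the stated proof is simply a pointer to \cite[Theorem 11.4.5, Chapter 9 (6.3.2)]{procesi} and \cite[Proposition 2.3.8, Corollary 2.3.3]{weyman}, and your sketch is a correct and fairly standard account of what those references contain. Your Schur--Weyl derivation of \eqref{eqn:cauchy} in characteristic $0$ is exactly right, and for \eqref{eqn:sym2plethysm}--\eqref{eqn:alt2plethysm} the reduction to the plethysms $h_m[h_2] = \sum_\mu s_{2\mu}$ and $h_m[e_2] = \sum_\mu s_{(2\mu)'}$ via invariants of the hyperoctahedral subgroup of $\SS_{2m}$ is the classical route. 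Two small corrections. First, that subgroup is $\SS_2 \wr \SS_m$ (base $(\SS_2)^m$ permuted by $\SS_m$), not $\SS_m \wr \SS_2$. Second, your two-step narrative of proving an isomorphism in characteristic $0$ and then ``upgrading'' it to a filtration slightly misrepresents the logic: the straightening-law construction of the $\Z$-filtration is a self-contained argument that does not pass through the characteristic $0$ isomorphism (which is rather a corollary of it), and the characteristic $0$ computation only predicts what the graded pieces should be. Since you ultimately quote Weyman for the filtered statement anyway this is not a gap, but the reference should be to \cite[\S 2.3]{weyman}, where the filtered Cauchy and plethysm formulas are proved, rather than to \S 2.1, which merely defines the functors.
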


For \eqref{eqn:sym2plethysm} and \eqref{eqn:alt2plethysm}, see
\cite[Theorem 11.4.5]{procesi} or \cite[Proposition 2.3.8]{weyman} and
for \eqref{eqn:cauchy}, see \cite[Chapter 9, (6.3.2)]{procesi} or
\cite[Corollary 2.3.3]{weyman}.

\begin{corollary} \label{cor:wedge2invariants} Suppose that $\dim V =
  2n$ and $\lambda$ is a partition of $n$. Then
  $\bS_\lambda(\bigwedge^2 V)$ contains $\SL(V)$-invariants if and
  only if $\lambda = (n)$.
\end{corollary}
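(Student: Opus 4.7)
The strategy is to apply Schur--Weyl duality to $(\bigwedge^2 V)^{\otimes n}$ as a $\GL(V) \times \SS_n$-module, and reduce the claim to a single dimension count. In characteristic $0$ one has
\[
(\bigwedge^2 V)^{\otimes n} \;=\; \bigoplus_{|\lambda|=n} \bS_\lambda(\bigwedge^2 V) \otimes [\lambda],
\]
where $[\lambda]$ denotes the Specht $\SS_n$-module. Taking $\SL(V)$-invariants gives
\[
\dim \bigl((\bigwedge^2 V)^{\otimes n}\bigr)^{\SL(V)} \;=\; \sum_{|\lambda|=n} \dim \bS_\lambda(\bigwedge^2 V)^{\SL(V)} \cdot \dim [\lambda],
\]
so it suffices to show that the left-hand side equals $1$ and that $\bS_{(n)}(\bigwedge^2 V)$ accounts for it.

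For the existence of an invariant in $\bS_{(n)}(\bigwedge^2 V) = \Sym^n(\bigwedge^2 V)$, I would apply the plethysm formula \eqref{eqn:alt2plethysm}: the summand indexed by $\mu=(n)$ is $\bS_{(2n)'}(V) = \bS_{(1^{2n})}(V) = \det V$, which is trivial under $\SL(V)$. Up to scalar, the resulting invariant is the classical Pfaffian, regarded as a degree-$n$ polynomial on $\bigwedge^2 V^*$.

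For the one-dimensionality of the full invariant space, I would use the embedding $(\bigwedge^2 V)^{\otimes n} \hookrightarrow V^{\otimes 2n}$ coming from $\bigwedge^2 V \hookrightarrow V^{\otimes 2}$, and then apply Schur--Weyl to $V^{\otimes 2n} = \bigoplus_{\nu \vdash 2n,\, \ell(\nu) \le 2n} \bS_\nu(V) \otimes [\nu]$. Because $\dim V = 2n$, the condition $\bS_\nu(V)^{\SL(V)} \ne 0$ requires that $\nu$ be a rectangle of height $2n$, and the only such $\nu$ of size $2n$ is $(1^{2n})$. Since $\dim[(1^{2n})] = 1$, this yields $\dim (V^{\otimes 2n})^{\SL(V)} = 1$, and so the submodule $(\bigwedge^2 V)^{\otimes n}$ satisfies $\dim \bigl((\bigwedge^2 V)^{\otimes n}\bigr)^{\SL(V)} \le 1$.

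Combining these two steps, the displayed sum equals $1$, and since the $\lambda = (n)$ term contributes at least $1 \cdot \dim[(n)] = 1$, every other term must vanish, giving the corollary. I do not foresee a serious obstacle: both inputs (the plethysm identity \eqref{eqn:alt2plethysm} and Schur--Weyl duality) are standard in characteristic $0$, which is the hypothesis in effect for this corollary, so the symbol $\sim$ can be treated as genuine isomorphism and the isotypic decomposition splits.
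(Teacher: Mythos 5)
Your proof is correct in characteristic~$0$, but it takes a genuinely different route than the paper. The paper argues by observing that $\bS_\lambda(\bigwedge^2 V)$ is a subquotient of $(\bigwedge^2 V)^{\otimes n}$, then uses Pieri's rule and Boffi's theorem (characteristic-independence of the Littlewood--Richardson multiplicities) to show that the one-dimensional representation $\bigwedge^{2n}V$ occurs with multiplicity exactly one in a Schur filtration of $(\bigwedge^2 V)^{\otimes n}$, and locates that unique occurrence inside $\Sym^n(\bigwedge^2 V)$ via \eqref{eqn:alt2plethysm}. You instead split $(\bigwedge^2 V)^{\otimes n}$ by Schur--Weyl duality, reduce to a dimension count, and get the upper bound by embedding into $V^{\otimes 2n}$ and applying Schur--Weyl there. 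Your embedding trick is a clean way to see the upper bound, and the dimension count replaces the iterated Pieri argument.

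However, there is a real gap in your premise about the hypothesis in force. The section on Schur functors states ``we make no assumptions on the characteristic of $K$, except for the last two results,'' and those last two results are Propositions~\ref{prop:dualschur} and~\ref{prop:SLinvariants} --- not this corollary. Moreover this corollary is invoked in the proof of Theorem~\ref{pfaffians}, which is stated and proved under the weaker hypothesis $\operatorname{char} K \ne 2$. So the corollary is meant to hold outside characteristic~$0$, and the paper's use of $\sim$-equivalences, filtrations, and Boffi's theorem is precisely what makes the argument characteristic-free. Your proof, by contrast, is explicit that it needs $\sim$ to be an isomorphism and $(\bigwedge^2 V)^{\otimes n} = \bigoplus_\lambda \bS_\lambda(\bigwedge^2 V)\otimes[\lambda]$ to be a direct sum of $\GL(V)\times\SS_n$-modules; the Schur--Weyl decomposition fails to split in positive characteristic, so the displayed equality of dimensions that drives your argument is no longer available. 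To make your approach match the paper's generality you would have to replace the direct-sum decomposition with a filtration argument (e.g.\ a good-filtration / tilting-module count) rather than treating $\sim$ as isomorphism.
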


\begin{proof} By \cite[Theorem 3.7]{boffi}, the tensor product of two
  Schur functors is $\sim$-equivalent to a direct sum of Schur
  functors, and these multiplicities are independent of
  characteristic. So $\bS_\lambda(\bigwedge^2 V)$ is a subquotient of
  $(\bigwedge^2 V)^{\otimes n}$. The $\SL(V)$-invariants of this
  tensor product are the submodules isomorphic to $\bigwedge^{2n}
  V$. By Pieri's rule (see \cite[\S 9.10.2]{procesi} or
  \cite[Corollary 2.3.5]{weyman}), this representation appears with
  multiplicity 1, and this comes from $\Sym^n(\bigwedge^2 V)$ by
  \eqref{eqn:alt2plethysm}.
\end{proof}

\begin{proposition} \label{prop:dualschur} Assume that the
  characteristic of $K$ is $0$. If $\dim V = n$, then we have an
  isomorphism of $\GL(V)$-representations $\bS_\lambda(V^*) =
  (\bigwedge^n V^*)^{\otimes \lambda_1} \otimes \bS_\mu V$ where
  $\mu_i = \lambda_1 - \lambda_{n+1-i}$.
\end{proposition}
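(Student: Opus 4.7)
The plan is to compare highest weights, exploiting that the hypothesis puts us in characteristic $0$ where Schur functors are irreducible $\GL(V)$-representations.

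First I would observe that in characteristic $0$, both sides of the claimed identity are irreducible representations of $\GL(V)$: the left side because $\bS_\lambda$ applied to any vector space produces an irreducible, and the right side because twisting an irreducible by a one-dimensional character (a power of $\det$) preserves irreducibility. So it suffices to check that both sides have the same highest weight with respect to the standard Borel of $\GL(V)$ (the upper triangular matrices in a fixed basis $v_1, \dots, v_n$ of $V$).

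Next I would compute the highest weight of $\bS_\lambda(V^*)$ as a $\GL(V)$-module. The dual basis $v_1^*, \dots, v_n^*$ of $V^*$ consists of $T$-eigenvectors with weights $-\varepsilon_1, \dots, -\varepsilon_n$, where $T \subset \GL(V)$ is the diagonal torus. The weights of $\bS_\lambda(V^*)$ as a $\GL(V)$-module are therefore the negatives of the weights of $\bS_\lambda(V)$. Since the lowest weight of $\bS_\lambda(V)$ (relative to the standard Borel) is the reversal $(\lambda_n, \lambda_{n-1}, \dots, \lambda_1)$ of the partition, the highest weight of $\bS_\lambda(V^*)$ is
\[
(-\lambda_n, -\lambda_{n-1}, \dots, -\lambda_1).
\]

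For the right hand side, $\bigwedge^n V^*$ is the one-dimensional determinant character of weight $(-1, -1, \dots, -1)$, so $(\bigwedge^n V^*)^{\otimes \lambda_1}$ contributes weight $(-\lambda_1, \dots, -\lambda_1)$. Tensoring with $\bS_\mu V$ (highest weight $\mu$) gives highest weight $(\mu_1 - \lambda_1, \mu_2 - \lambda_1, \dots, \mu_n - \lambda_1)$. Setting this equal to the highest weight of the left side entry-by-entry forces $\mu_i = \lambda_1 - \lambda_{n+1-i}$, matching the definition in the statement. Finally, I would check that $\mu$ really is a partition: as $i$ increases, $n+1-i$ decreases, so $\lambda_{n+1-i}$ is weakly increasing in $i$, hence $\mu_i$ is weakly decreasing, and $\mu_n = \lambda_1 - \lambda_1 = 0 \ge 0$ so all parts are nonnegative.

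There is no real obstacle here; the only thing to be careful about is the bookkeeping of weights under dualization, in particular the fact that dualizing an irreducible representation corresponds to negating weights (equivalently, reversing the partition and negating), which is where the unusual formula $\mu_i = \lambda_1 - \lambda_{n+1-i}$ comes from. The argument uses characteristic $0$ in an essential way through the irreducibility of Schur functors and the highest weight theorem.
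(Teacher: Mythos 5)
Your proof is correct. The paper itself offers no argument for this proposition, only a citation to Procesi (\cite[\S 2.3.3]{procesi}); what you wrote is the standard highest-weight comparison, and it is complete. The one point worth being explicit about is the step ``the weights of $\bS_\lambda(V^*)$ as a $\GL(V)$-module are the negatives of the weights of $\bS_\lambda(V)$'': this follows because $\bS_\lambda$ is a polynomial functor, so the weights of $\bS_\lambda(W)$ with respect to a torus acting diagonally on a basis of $W$ depend only on $\lambda$ and the torus weights of that basis (via the Kostka multiplicities), and the torus weight of $v_i^*$ for the $\GL(V)$-action is $-\eps_i$. Equivalently you could invoke the isomorphism $\bS_\lambda(V^*) \cong \bS_\lambda(V)^*$ that the paper itself quotes immediately after (for Proposition~\ref{prop:SLinvariants}) from \cite[Proposition 2.1.18]{weyman}, and then apply $-w_0$ to the highest weight $\lambda$. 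Either way, the rest of the weight bookkeeping, the verification that $\mu$ is a partition, and the appeal to irreducibility of both sides in characteristic $0$ are all correct.
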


See \cite[\S 2.3.3]{procesi}. 

\begin{proposition} \label{prop:SLinvariants} Assume that the
  characteristic of $K$ is $0$. Write $\lambda' = (1^{d_1}, \dots,
  (n-1)^{d_{n-1}})$ and $\mu' = (1^{e_1}, \dots,
  (n-1)^{e_{n-1}})$. Then $\bS_\lambda(V) \otimes \bS_\mu(V)$ contains
  a nonzero $\SL(V)$-invariant if and only if $d_i = e_{n-i}$ for
  $i=1,\dots,n-1$. In this case, the action of $\GL(V)$ on these
  $\SL(V)$-invariants is via the $m$th power of the determinant, where
  $m = d_1 + \cdots + d_{n-1} = e_1 + \cdots + e_{n-1}$.
\end{proposition}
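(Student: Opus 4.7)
The plan is to decompose $\bS_\lambda(V) \otimes \bS_\mu(V)$ into $\GL(V)$-irreducibles and identify which summands carry $\SL(V)$-invariants. Since $\GL(V)$ is generated by $\SL(V)$ together with its center, any irreducible $\GL(V)$-representation contains a nonzero $\SL(V)$-invariant if and only if it is one-dimensional, i.e., isomorphic to $(\det V)^{\otimes m} \cong \bS_{(m^n)}(V)$ for some $m \ge 0$. Hence the space of $\SL(V)$-invariants has dimension
\[
\sum_{m \ge 0} \dim \hom_{\GL(V)}\bigl(\bS_\lambda(V),\ \bS_\mu(V)^* \otimes (\det V)^m\bigr),
\]
and $\GL(V)$ acts on each such invariant by the corresponding power $(\det V)^m$.

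Next I would use Proposition~\ref{prop:dualschur} together with the characteristic-$0$ isomorphism $\bS_\mu(V)^* \cong \bS_\mu(V^*)$ to rewrite
\[
\bS_\mu(V)^* \otimes (\det V)^m \cong (\det V)^{m - \mu_1} \otimes \bS_{\tilde\mu}(V) \cong \bS_\nu(V),
\]
where $\tilde\mu_i = \mu_1 - \mu_{n+1-i}$ and $\nu_i = m - \mu_{n+1-i}$ (this is a valid polynomial representation exactly when $m \ge \mu_1$; otherwise the hom group vanishes). By Schur's lemma, the hom space is one-dimensional when $\lambda = \nu$ and zero otherwise, so an invariant exists if and only if
\[
\lambda_i + \mu_{n+1-i} = m \qquad (i = 1, \dots, n).
\]
Using the hypothesis that $\lambda'$ and $\mu'$ have no part equal to $n$, we have $\lambda_n = \mu_n = 0$, and evaluating the above at $i=1$ and $i=n$ forces $m = \lambda_1 = \mu_1$, so $m$ is uniquely determined.

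Finally I would translate this row-complementarity condition into the stated condition on conjugate partitions. Geometrically, $\lambda_i + \mu_{n+1-i} = m$ says the Young diagrams of $\lambda$ and $\mu$ are $180^\circ$ rotations of one another inside the $m \times n$ rectangle. Under this rotation a column of height $i$ in $\lambda$ becomes a column of height $n-i$ in $\mu$, so counting gives $d_i = e_{n-i}$ for $i = 1, \dots, n-1$. Moreover $m = \lambda_1$ equals the total number of columns of $\lambda$, which is $d_1 + \cdots + d_{n-1}$, and equivalently $e_1 + \cdots + e_{n-1}$. The only real obstacle is careful index bookkeeping in passing between the row-length and column-height descriptions of the complementation.
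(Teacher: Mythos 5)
Your argument is correct and is essentially the same as the paper's: the paper simply cites Proposition~\ref{prop:dualschur} together with the isomorphism $\bS_\lambda(V^*)^* \cong \bS_\lambda(V)$ and leaves the verification implicit, while you have spelled out the row-complementarity condition $\lambda_i + \mu_{n+1-i} = m$ and its translation into $d_i = e_{n-i}$ in full.
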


This follows from the previous proposition and the isomorphism
$\bS_\lambda(V^*)^* \cong \bS_\lambda(V)$ in characteristic 0
\cite[Proposition 2.1.18]{weyman}.

\subsection{Proof of Theorem~\ref{pfaffians}.} \label{sec:proof}

\subsubsection{Reduction to a unique source and sink with nonzero
  weight.}

Let $Q$ be a symmetric quiver with a symmetric dimension vector
$\beta$. It is enough to prove Theorem~\ref{pfaffians} for each weight
space of $\SSI(Q, \beta)$. Given a weight $\sigma \in \Z^{Q^+_0}
\times (\Z/2)^{Q_0^{\tau_+}}$, we lift it to some $\tau$-invariant
weight $\sigma^\circ \in \Z^{Q_0}$.

Form a new symmetric quiver $\ol{Q}$ as follows. We add two new
vertices $x_-$ and $x_+$. If $\sigma^\circ(x) > 0$, then we add
$\sigma^\circ(x)$ arrows $x_- \to x$ and $\sigma^\circ(x)$ arrows
$\tau(x) \to x_+$. If $\sigma^\circ(x) < 0$, then we add
$-\sigma^\circ(x)$ arrows $x \to x_+$ and $-\sigma^\circ(x)$ arrows
$x_- \to \tau(x)$. Let $\ol{\tau}$ be the involution on $\ol{Q}_0$
that switches $x_-$ and $x_+$ and restricts to $\tau$ on $Q_0$. Define
$\ol{\tau}$ on $\ol{Q}_1$ to be the same as $\tau$ on $Q_1$ and to
switch the arrows incident to $x_-$ and $x_+$ (we add arrows two
groups at a time, so just fix an identification of these groups). We
have $\ol{Q}_0^+ = \{x_-\} \cup Q_0^+$, $\ol{Q}_0^\tau = Q_0^\tau$ and
$\ol{Q}_0^- = \{x_+\} \cup Q_0^-$. Also, $\ol{Q}_1^\tau = Q_1^\tau$,
while $\ol{Q}_1^\pm$ is $Q_1^\pm$ plus the arrows incident to
$x_\mp$. Set
\begin{align*}
  \ol{\beta}(x_-) = \ol{\beta}(x_+) = \sum_{x \in Q_0^+} |
  \sigma^\circ(x) | \beta(x),
\end{align*}
and $\ol{\beta}(x) = \beta(x)$ for all $x \in Q_0$. Define a symmetric
weight $\ol{\sigma}$ by $\ol{\sigma}(x_-) = 1$, and $\ol{\sigma}(x) =
0$ for all $x \in Q_0^+ \cup Q_0^\tau$. Also define a $\tau$-invariant
weight $\ol{\sigma}^\circ$ by $\ol{\sigma}^\circ(x_{\pm}) = 1$ and
$\ol{\sigma}^\circ(x) = 0$ for all $x \in Q_0$.

Given a representation $W \in \SRep(\ol{Q}, \ol{\beta})$, let $D(W)$
be the determinant of the matrix formed by taking the direct sum of
all maps incident to $x_-$ (equivalently, all maps incident to
$x_+$). Any function on $\SRep(Q, \beta)$ is naturally a function on
$\SRep(\ol{Q}, \ol{\beta})$. We claim that $c \mapsto Dc$ gives an
isomorphism 
\[
\phi \colon \SSI(Q,\beta)_{\sigma} \to \SSI(\ol{Q},
\ol{\beta})_{\ol{\sigma}}.
\] 
First, suppose that $c \in K[\SRep(Q, \beta)]$ is a symmetric
semi-invariant of weight $\sigma^\circ$. Pick $g \in {\bf G}(\ol{Q},
\ol{\beta})$. Let $W \in \SRep(\ol{Q}, \ol{\beta})$ be a symmetric
representation. Then
\begin{align*}
  (g \cdot D)(W) &= \big( \det g_{x_-} \cdot \prod_{\substack{ x \in
      Q_0^+ \cup Q_0^\tau \\ \sigma^\circ(x) > 0}} (\det
  g_x)^{-\sigma^\circ(x)} \cdot \prod_{\substack{ x \in Q_0^+
      \cup Q_0^\tau \\ \sigma^\circ(x) < 0}} (\det
  g_{\tau(x)})^{\sigma^\circ(x)} \big) D(W),\\ 
  (g \cdot c)(W) &= \big( \prod_{x \in Q_0^+ \cup Q_0^\tau} (\det
  g_x)^{\sigma^\circ(x)} \big) c(W),
\end{align*}
Noting that $g_{\tau(x)} = (g^{-1}_x)^t$ when $x \ne \tau(x)$ and
$(\det g_x)^2 = 1$ when $x = \tau(x)$, we see that
\[
(g \cdot Dc)(W) = (\det g_{x_-}) (Dc)(W),
\]
so $Dc$ is a symmetric semi-invariant of weight $\ol{\sigma}$, and
hence $\phi$ is well-defined. It is clear that $\phi$ is injective,
and surjectivity follows from direct calculations via
\eqref{eqn:cauchy}.

\begin{proposition} Use the notation above. Let $\ol{V}$ be a
  representation of $\ol{Q}$ such that $\langle \ol{V}, \beta
  \rangle_{\ol{Q}} = 0$. If the function $c^{\ol{V}}$ is a square,
  then writing $\pf^{\ol{V}} = Dp$, we have $p = \pf^V$ for some
  representation $V$ of $Q$.
\end{proposition}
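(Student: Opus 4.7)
The plan is to compute $c^{\ol{V}}$ explicitly via the canonical resolution of $\ol{V}$, and then take its Pfaffian square root. Set $V := \ol{V}|_Q$, the representation of $Q$ obtained from $\ol{V}$ by forgetting the vertices $x_\pm$. For $W \in \SRep(\ol{Q},\ol{\beta})$, the differential $d^{\ol{V}}_W$ acts between direct sums that split along the partition $\ol{Q}_0 = Q_0 \sqcup \{x_-\} \sqcup \{x_+\}$ and the corresponding partition of $\ol{Q}_1$ into arrows of $Q_1$, arrows out of $x_-$, and arrows into $x_+$. Since $x_-$ is a source and $x_+$ is a sink in $\ol{Q}$, the resulting block matrix is block lower triangular, with diagonal blocks: the internal map $d^V_{W|_Q}$; the map $\hom(\ol{V}(x_-), M_-)$, where $M_-$ is the map $W(x_-) \to \bigoplus_{a: x_- \to y} W(y)$ assembled from the $W_a$ for $a$ out of $x_-$; and a block given by precomposition with $N_+$, the map assembled from the $\ol{V}_a$ for $a$ into $x_+$.

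Taking determinants block-by-block yields
\[
c^{\ol{V}} \;=\; \pm (\det N_+)^{\ol{\beta}(x_+)} \cdot D^{\dim \ol{V}(x_-)} \cdot c^V,
\]
since by construction $D = \det M_-$ and $\det N_+$ is a scalar independent of $W$ (nonzero exactly when $c^{\ol{V}}$ is nonzero). If $c^{\ol{V}}$ is a square, then because $D$ is irreducible as a polynomial in the $W_a$-entries for arrows out of $x_-$, the factor $c^V$ must itself be a square on $\SRep(Q,\beta)$, giving $\pf^V$, and $\dim \ol{V}(x_-)$ must be even. Choosing compatible signs of square roots gives
\[
\pf^{\ol{V}} \;=\; c_0 \cdot D^{\dim \ol{V}(x_-)/2} \cdot \pf^V
\]
for some nonzero scalar $c_0$. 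In the setting $\pf^{\ol{V}} = Dp$ of the proposition, comparing weights at $x_-$ via the isomorphism $\phi$ forces $\dim \ol{V}(x_-) = 2$, whence $p = c_0 \cdot \pf^V$; the constant is absorbed into the sign ambiguity in the choice of square root, so $p = \pf^V$.

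The main obstacle is the block-triangularity step: one must verify that, after imposing the symmetric-representation constraints (in particular the duality that ties the $x_+$-data to the $x_-$-data on $\SRep(\ol{Q},\ol{\beta})$), the off-diagonal entries really vanish in the claimed pattern and the two ``boundary'' blocks contribute only the indicated factors of $D$ and of the scalar $\det N_+$. Once the block decomposition is in place, the rest of the computation reduces to the elementary identity $\det \hom(K^a, M) = (\det M)^a$ for a square matrix $M$.
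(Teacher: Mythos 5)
Your proof takes a genuinely different route from the paper's: the paper extends $c^{\ol{V}}$ from $\SRep(\ol{Q},\ol{\beta})$ to a semi-invariant of the full representation variety $\Rep(\ol{Q},\ol{\beta})$, invokes \cite[Proposition 2]{saturation} to obtain the factorization $c^{\ol{V}}=D^-c^V D^+$ there (with $D^-$, $D^+$ the two distinct determinants and $V$ a representation of $Q$), and only then restricts back to the symmetric locus, where $D^\pm$ both become $D$ and so $c^{\ol{V}}=D^2c^V$. You instead try to compute $c^{\ol{V}}$ directly on $\SRep(\ol{Q},\ol{\beta})$ from the canonical resolution. This is a reasonable idea but, as written, it has several gaps.

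First, the block-triangular determinant step is not justified. You correctly observe that, ordering the columns as $(Q_0, x_-, x_+)$ and rows as $(Q_1, A_-, A_+)$, the matrix of $d^{\ol{V}}_W$ is block lower triangular. But ``determinant $=$ product of diagonal blocks'' needs each diagonal block to be \emph{square}, and you don't verify this. The hypothesis $\langle \dim\ol{V}, \ol{\beta}\rangle_{\ol{Q}}=0$ only makes the \emph{whole} matrix square. In fact the $(A_+,x_+)$ block (precomposition with $N_+$) has domain of dimension $\dim\ol{V}(x_+)\cdot\ol{\beta}(x_+)$ and codomain of dimension $(\sum_{a\in A_+}\dim\ol{V}(ta))\cdot\ol{\beta}(x_+)$; these agree only when $\dim\ol{V}(x_+) = \sum_{a\in A_+}\dim\ol{V}(ta)$, which is a nontrivial constraint on $\ol{V}$ that is not automatic (indeed the natural weight constraint forces $\langle\dim\ol{V},\eps_{x_+}\rangle$ to be $-1$ rather than $0$). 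Once a diagonal block fails to be square, your formula $c^{\ol{V}}=\pm(\det N_+)^{\ol{\beta}(x_+)} D^{\dim\ol{V}(x_-)}c^V$ has no meaning, and indeed its weight at $x_-$ does not match that of a weight-$\ol{\sigma}$ semi-invariant. Second, the step ``comparing weights at $x_-$ forces $\dim\ol{V}(x_-)=2$'' is incorrect: on $\SRep$ the weight of $\pf^{\ol{V}}$ at $x_-$ equals $\tfrac12\langle\dim\ol{V},\eps_{x_-}-\eps_{x_+}\rangle$, which is a linear combination of $\dim\ol{V}(x_-)$, $\dim\ol{V}(x_+)$, and $\sum_{a\in A_+}\dim\ol{V}(ta)$; setting it equal to $1$ pins down that combination, not $\dim\ol{V}(x_-)$ itself. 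Third, even if the blocks were square, the representation $V=\ol{V}|_Q$ you get is generally not the $V$ from DW's factorization; DW's argument first modifies $\ol{V}$ so that the data at $x_\pm$ normalizes to identity maps, and it is this normalization, not the raw restriction, that produces a $V$ with $\langle\dim V,\beta\rangle_Q=0$ and the correct weight. So the direct-computation route can be made to work, but it needs the normalization step (essentially DW's lemma), which is exactly what the paper delegates to \cite{saturation}.
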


\begin{proof} 
  Given $c^{\ol{V}}$, we can extend it to a semi-invariant of
  $\Rep(\ol{Q}, \ol{\beta})$ of weight $\ol{\sigma}^\circ$. By
  \cite[Proposition 2]{saturation}, we can write $c^{\ol{V}} =
  D^-c^VD^+$ for some representation $V$ of $\ol{Q}$ which is
  supported on the subquiver $Q$. The restriction of both $D^+$ and
  $D^-$ to $\SRep(\ol{Q}, \ol{\beta})$ is $D$, so restricting this
  identity to $\SRep(\ol{Q}, \ol{\beta})$, we get $c^{\ol{V}} = D^2
  c^V$. Since $(\pf^{\ol{V}})^2 = c^{\ol{V}}$, we conclude that $p^2 =
  c^V$.
\end{proof}

Therefore, to prove Theorem~\ref{pfaffians}, we may replace $Q$ by
$\ol{Q}$. So we may assume without loss of generality that our
symmetric quiver has a unique source (and hence a unique sink) with
weight 1, and that all other vertices have weight 0.

\subsubsection{Reduction to no weight 0 vertices.} 

To do the second step, we find a vertex $x$ such that $\sigma(x) = 0$
and delete $\{x,\tau(x)\}$ to get a new vertex set $Q'_0$. To define
arrows $Q'_1$, there are two cases to consider, depending on whether
or not $\tau(x) = x$. In both cases, we construct a map of the form
\begin{align} \label{compositionmap} 
  \res^* \colon \SSI(Q', \beta')_{\sigma'} \to \SSI(Q, \beta)_\sigma.
\end{align}
Then we show that it is surjective and that the image of a Pfaffian
semi-invariant is also a Pfaffian semi-invariant so that we can
replace $Q$ by $Q'$.

~

If $x = \tau(x)$, then for any arrows $y \xrightarrow{a} x
\xrightarrow{b} z$, we add an arrow $y \xrightarrow{(a,b)} z$ to
$Q'_1$. We define $\tau'(a,b) = (\tau(b), \tau(a))$. For $(a,b) \in
{Q'_1}^{\tau'}$, we set $s'(a,b) = s(x)$. We define $\tau'$ on $Q_0'$
to be the restriction of $\tau$ from $Q_0$. Similarly, let $\sigma'$
be the restriction of $\sigma$.

Given a symmetric representation $V$ of $Q$ of dimension $\beta$, we
define a symmetric representation $V' = \res V$ of $Q'$ of dimension
$\beta'$, where $\beta'$ is the restriction of $\beta$, by setting
$V_{(a,b)} = V_b V_a$ for all of the new arrows, and by leaving
everything else as is. This gives us a map on symmetric
semi-invariants as in \eqref{compositionmap}. If $s(x) = 1$, we can
consider the map $\bigoplus_{ y \xrightarrow{a} x} K^{\beta(y)} \to
K^{\beta(x)}$ as a choice of $p = \sum_{y \xrightarrow{a} x} \beta(y)$
vectors in $K^{\beta(x)}$. Note that for every $a \in Q_1$, at most
one of $a$ and $\tau(a)$ appears in the sum. Hence
Theorem~\ref{theorem:fund}\eqref{fund:orthogonal} implies that
\eqref{compositionmap} is surjective. Similarly, if $s(x) = -1$, we
can use Theorem~\ref{theorem:fund}\eqref{fund:symplectic} to conclude
that \eqref{compositionmap} is surjective.

~

If $x \ne \tau(x)$, suppose that all arrows between $x$ and $\tau(x)$
are oriented as $x \to \tau(x)$ (which we may assume without loss of
generality since $Q$ has no directed cycles). For arrows $y
\xrightarrow{a} x \xrightarrow{b} \tau(x) \xrightarrow{c} z$, we
define an arrow $y \xrightarrow{(a,b,c)} z$ in $Q'_1$. We define
$\tau'$ on these arrows by $\tau'(a,b,c) = (\tau(c), \tau(b),
\tau(a))$. If this arrow is $\tau'$-invariant, we set $s(a,b,c) =
s(b)$. Also, for arrows $y \xrightarrow{a} x \xrightarrow{c} z$ with
$z \ne \tau(x)$, we define an arrow $y \xrightarrow{(a,c)} z$ in
$Q'_1$, and we do a similar thing when $x$ is replaced by
$\tau(x)$. We set $\tau'(a,c) = (\tau(c), \tau(a))$.  Any other arrows
not incident to $x$ or $\tau(x)$ are also added to $Q'_1$; $\tau'$
and $s'$ are defined as the restriction of $\tau$ and $s$ on these
arrows.

Given a symmetric representation $W$ of $Q$ of dimension $\beta$, we
define a representation $W' = \res W$ of $Q'$ of dimension $\beta'$,
where $\beta'$ is the restriction of $\beta$, as follows. First, set
$W'(y) = W(y)$ for all $y \in Q'_0$. For any arrow of the form $y
\xrightarrow{(a,b,c)} z$ we define $W'_{(a,b,c)} =
W_cW_bW_a$. Similarly, for arrows of the form $y \xrightarrow{(a,c)}
z$ we set $W'_{(a,c)} = W_cW_a$. For all other arrows, we define $W'$
to be the restriction of $W$. This gives a map as in
\eqref{compositionmap}, which is surjective by
Theorem~\ref{theorem:fund}\eqref{fund:general} and
Lemma~\ref{fund:tensors}.

Now we need to know that under \eqref{compositionmap}, determinantal
and Pfaffian semi-invariants pull back to determinantal and Pfaffian
semi-invariants, respectively.

\begin{proposition} Use the notation above. The image of a
  determinantal semi-invariant $c^{V'} \in \SSI(Q',\beta')_{\sigma'}$
  under \eqref{compositionmap} is of the form $c^V$ for some
  representation $V$ of $Q$. Similarly, the image of a Pfaffian
  semi-invariant is a Pfaffian semi-invariant.
\end{proposition}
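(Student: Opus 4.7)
The plan is to reduce the Pfaffian statement to the determinantal one, and then to deduce the determinantal statement by applying \cite[Proposition 2]{saturation} (once or twice) to the underlying quiver $Q^\circ$.

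I would first derive the Pfaffian part from the determinantal part. Suppose that $\res^* c^{V'} = c^V$ for some representation $V$ of $Q^\circ$. Since $(\pf^{V'})^2 = c^{V'}$ on $\SRep(Q',\beta')$ and $\res^*$ is a ring homomorphism,
\[
\bigl(\res^* \pf^{V'}\bigr)^2 \;=\; \res^* c^{V'} \;=\; c^V|_{\SRep(Q,\beta)}.
\]
Thus the restriction of $c^V$ to $\SRep(Q,\beta)$ is a square, so by definition $\pf^V$ exists, and $\res^* \pf^{V'} = \pm \pf^V$.

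For the determinantal case, the key observation is that $\res$ is the restriction to $\SRep(Q,\beta)$ of a polynomial map $\widetilde{\res} \colon \Rep(Q^\circ,\beta) \to \Rep(Q'^\circ,\beta')$ defined by the same composition formulas (multiplying structure maps along paths through $x$, or through $x$ and $\tau(x)$). Since $c^{V'}$ is by definition the restriction to $\SRep(Q',\beta')$ of a determinantal semi-invariant on $\Rep(Q'^\circ,\beta')$, it suffices to exhibit a representation $V$ of $Q^\circ$ with $\widetilde{\res}^* c^{V'} = c^V$ as polynomial functions on $\Rep(Q^\circ,\beta)$. In the case $x = \tau(x)$, this is exactly \cite[Proposition 2]{saturation} applied to the single weight-zero vertex $x$ of $Q^\circ$. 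In the case $x \neq \tau(x)$, the symmetric deletion of $\{x,\tau(x)\}$ coincides with the two-step non-symmetric deletion of $x$ followed by $\tau(x)$: the path $y \xrightarrow{a} x \xrightarrow{b} \tau(x) \xrightarrow{c} z$ first contracts to $y \xrightarrow{(a,b)} \tau(x) \xrightarrow{c} z$ and then to $y \xrightarrow{((a,b),c)} z$, matching the arrow $(a,b,c)$; the paths of length two through only one of $x$ or $\tau(x)$ are handled analogously. Two successive applications of \cite[Proposition 2]{saturation} therefore produce the desired $V$.

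The principal technical point is just the combinatorial verification in the case $x \neq \tau(x)$ that two-step non-symmetric deletion reproduces the contracted quiver $Q'^\circ$ and the map $\widetilde{\res}$ described above; this is a direct unpacking of the definitions of the new arrows $(a,b,c)$ and $(a,c)$. Once this is in hand, the proof reduces entirely to invoking \cite[Proposition 2]{saturation}, followed by restriction to $\SRep(Q,\beta)$ and, for Pfaffians, the squaring argument of the first paragraph.
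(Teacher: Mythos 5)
Your proposal is correct and follows essentially the same route as the paper: deduce the Pfaffian case from the determinantal one by the squaring argument, and for the determinantal case observe that the symmetric deletion of $\{x,\tau(x)\}$ is the restriction to $\SRep(Q,\beta)$ of the (non-symmetric) composition of one or two applications of Derksen--Weyman's weight-zero vertex deletion on $Q^\circ$, via an intermediate quiver $\tilde{Q}$ in the $x\neq\tau(x)$ case. One small point: the statement of Derksen--Weyman you need here is their Step~2 of Theorem~1 (the weight-zero vertex removal), not their Proposition~2, which the paper uses earlier for the source/sink reduction; your mathematical content is right, only the citation label is off.
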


\begin{proof} First we deal with determinantal semi-invariants. In the
  case $x = \tau(x)$, the proof of Step 2 of Theorem 1 in
  \cite{saturation} works in our case. So we get $V$ such that the
  pullback of $c^{V'}$ is $c^V$. 

  Now we deal with the case $x \ne \tau(x)$. We define an intermediate
  quiver $\tilde{Q}$ by forgetting that $Q$ is a symmetric quiver and
  deleting $x$ from $Q^\circ$ as in Step 2 of Theorem 1 in
  \cite{saturation}. Deleting $\tau(x)$ from $\tilde{Q}$ again as in
  Step 2 of Theorem 1 in \cite{saturation}, we get to
  $(Q')^\circ$. Hence we have maps
  \[
  \Rep(Q^\circ) \to \Rep(\tilde{Q}) \to \Rep((Q')^\circ).
  \]
  Restricting the composition to $\SRep(Q)$, the image is in
  $\SRep(Q')$, and we recover our restriction map $\res$. Now Step 2
  of Theorem 1 in \cite{saturation} implies that $\res^* c^{V'} = c^V$
  for some $V \in \Rep(Q, \beta)$.

  Finally, if we have a Pfaffian semi-invariant $\pf^{V'}$ (in either
  case), then $(\res^*\pf^{V'})^2 = c^V$ for some $V$, and hence
  $\res^*\pf^{V'} = \pf^V$.
\end{proof}

By induction, we may assume that $Q$ has only two vertices, so we can
reduce to the case of a generalized Kronecker quiver where the weight
is 1.

\subsubsection{The generalized Kronecker quiver.} 

We have to work with the quiver $\Theta_{p,r}^+$ (respectively
$\Theta_{p,r}^-$) which is defined to be $p$ arrows $x_-
\xrightarrow{a_i} x_+$ with $s(a_i) = 1$ (respectively $s(a_i) = -1$)
and $r$ pairs of arrows $\{b_i, b'_i\}$ such that $\tau(b_i) = b'_i$
and $\tau(b'_i) = b_i$. The dimension vector is $\beta = (n,n)$, and
the weight is given by $\sigma(x_-) = 1$.

This case can be handled similarly to the way it was handled in
\cite{saturation}. In the case of $\Theta^-_{p,r}$, with dimension
vector $(n,n)$, let $V = K^n$. The representation variety is $\hom(V,
V^*)^{\oplus r} \oplus (\bigwedge^2 V^*)^{\oplus p}$, which splits up
in a $\GL(V)$-equivariant way as $(\Sym^2 V^*)^{\oplus r} \oplus
(\bigwedge^2 V^*)^{\oplus (p+r)}$. We are only interested in the
semi-invariants of weight 1, and by \eqref{eqn:sym2plethysm}, only the
polynomial functions on $\bigwedge^2 V^*$ can contribute. We think of
this as $(\bigwedge^2 V^*) \otimes U$ with an action of $\GL(U)$ where
$U \cong K^{p+r}$. (We really have $p+2r$ arrows, so $U$ is
identifying the arrows that come in pairs.) By \eqref{eqn:cauchy}, we
have
\[
K[\bigwedge^2 V^* \otimes U] = \Sym(\bigwedge^2 V \otimes U^*) \sim
\bigoplus_\lambda \bS_\lambda(\bigwedge^2 V) \otimes \bS_\lambda(U^*),
\]
and $\bS_\lambda(\bigwedge^2 V)$ contains an $\SL(V)$-invariant of
weight 1 if and only if $n$ is even and $\lambda = (n/2)$ by
Corollary~\ref{cor:wedge2invariants}. Hence we have
\[
\SSI(\Theta_{p,r}^-, (n,n))_1 = \bigwedge^n V \otimes {\rm
  D}^{n/2}(U^*),
\]
where ${\rm D}$ denotes the divided power functor. To see this, we
just note that $\Sym^{n/2}(\bigwedge^2 V\otimes U^*) \subset
\Sym^{n/2}(V \otimes V \otimes U^*)$, and the latter module contains
$\bigwedge^{n/2} V \otimes \bigwedge^{n/2} V \otimes {\rm D}^{n/2}
U^*$ by the dual Cauchy filtration for $\bigwedge^{n/2}(V \otimes
U^*)$ \cite[Theorem 2.3.2(b)]{weyman}.

Since ${\rm D}^{n/2}(U^*)$ is a highest weight module for $\GL(U)$, it
is enough to show that its highest weight vector is represented by a
semi-invariant $\pf^V$ for some representation $V$ since these kinds
of semi-invariants are invariant under the action of $\GL(U)$.  Fix an
ordering of the arrows $a_1, \dots, a_{p+2r}$. Now define a
representation $W$ of dimension $(1,p+2r-1)$ by $W_{a_1} = 0$ and
$W_{a_i}({\bf 1}) = e_{i-1}$ for all $i>1$, where ${\bf 1}$ is a
nonzero vector of $W_-$ and $e_1, \dots, e_{p+2r-1}$ is a basis for
$W_+$. Then $c^W = \det V_{a_1}$ (up to scalar multiple) and hence
$\pf^W = \pf V_{a_1}$. Then $\pf^{W_{a_1}}$ is the desired highest
weight vector.

The situation of $\Theta^+_{p,r}$ is similar to that of
$\Theta^-_{p,r}$. The representation variety instead decomposes as
$(\Sym^2 V^*)^{\oplus (p+r)} \oplus (\bigwedge^2 V^*)^{\oplus r}$, but
we can proceed as above. 

This concludes the proof of Theorem~\ref{pfaffians}.



\section{Semi-invariants of quivers with relations.} \label{section:relations}

Suppose that the characteristic is different from 2 for this section,
except in the statements of
Proposition~\ref{proposition:pfaffianrelations} and
Theorem~\ref{theorem:SIrelations}, where the characteristic is assumed
to be 0.

\subsection{Quivers with relations.}

Let $Q$ be a symmetric quiver (without oriented cycles, as usual), and
let $KQ$ be its path algebra. We let $\fm \subset KQ$ be the two-sided
ideal generated by all arrows $a \in Q_1$. Given any two vertices $x,
y \in Q_0$ we say that a linear combination of paths from $x$ to $y$
is {\bf homogeneous}. A two-sided ideal $I \subset KQ$ is {\bf
  admissible} if $I \subseteq \fm^2$. We will also assume that $I$ is
$\tau$-invariant. We denote by $Q/I$ the quiver with relations $I$,
i.e., (symmetric) representations of $Q/I$ are (symmetric)
representations $V$ of $Q$ for which $IV = 0$. The requirement on
admissible ideals is for convenience, since having an element of
length 0 or 1 in the relations is equivalent to considering the
quotient $Q'/I'$ where $Q'$ is the quiver with the corresponding
vertices or edges removed, and $I'$ is the ideal $I$ with the
corresponding relations removed.

\subsubsection{Representations.}

The symmetric representation variety of $Q/I$ of dimension $\beta$ is
denoted $\SRep(Q/I, \beta)$. This is a closed subvariety of $\SRep(Q,
\beta)$ which is $\SG(Q, \beta)$-invariant. Hence the surjection
$K[\SRep(Q, \beta)] \to K[\SRep(Q/I, \beta)]$ is $\SG(Q,
\beta)$-equivariant. If $K$ has characteristic 0, then $\SG(Q, \beta)$
is linearly reductive, so by semisimplicity, we get a surjective map
of semi-invariants $\SSI(Q, \beta) \to \SSI(Q/I, \beta)$.

However, we are not interested in the whole variety $\SRep(Q/I,
\beta)$. In general, this variety is reducible, so let $\SRep(Q/I,
\beta)^{(1)}, \dots, \SRep(Q/I, \beta)^{(N)}$ denote its irreducible
components. 

For a representation $V$ of $Q/I$, let $\pdim V$ denote the projective
dimension of $V$ over $KQ/I$. The global dimension of $Q/I$
(abbreviated $\gldim Q/I$) is defined as the largest possible
projective dimension of a module of $Q/I$, and is at most $\#Q_0 - 1$
(this can be proven by induction on the number of vertices after
choosing an ordering of the vertices such that arrows only go from
smaller vertices to bigger ones, but we won't use this fact). Hence we
can define a modified Euler form for two representations $V$ and $W$
of $Q/I$ with dimension vectors $\alpha = \dim V$ and $\beta = \dim W$
via
\begin{align} \label{eqn:eulerform:Q/I}
\langle \alpha, \beta \rangle_I = \sum_{i \ge 0} (-1)^i \dim_K
\ext^i_{Q/I} (V, W).
\end{align}
In fact, this definition depends only on the vectors $\alpha$ and
$\beta$ \cite[Proposition III.3.13]{assem}. The indecomposable
projective modules of $Q/I$ are indexed by $Q_0$, and are given by
$P_x = P'_x / IP'_x$ where $P'_x$ is the indecomposable projective
module for $Q$ indexed by $x \in Q_0$ \cite[Lemma
III.2.4]{assem}. From this, we see that projective covers are
well-defined for representations of $Q/I$, see also \cite[Theorem
I.5.8]{assem}.

\subsubsection{Semi-invariants.} \label{section:semiinvariantsQ/I}

We denote the ring of semi-invariants $K[\SRep(Q/I,
\beta)^{(j)}]^{\SG(Q, \beta)}$ by $\SSI(Q/I, \beta)^{(j)}$. These
components need not be invariant under ${\bf G}(Q,\beta)$ if
$Q_0^{\tau_+}$ is nonempty. However, they are invariant under $\SG(Q,
\beta)$ since it is a connected group. We can define analogues of
determinantal and Pfaffian semi-invariants for symmetric quivers with
relations. When discussing semi-invariants, we will work modulo the
nilpotent radical. This remark will not affect our main application in
Section~\ref{section:flagquiver}, but we include it here for
simplicity.

Given a representation $V$ of $Q/I$, let
\[
P_1 \to P_0 \to V \to 0
\]
be a minimal projective presentation of $V$. Given any other
representation $W$, we define the map
\[
\ol{d}^V_W \colon \hom_{KQ/I}(P_0, W) \to \hom_{KQ/I}(P_1, W).
\]
If it is a square matrix, we set $\ol{c}^V_W$ to be its
determinant. In this case, we get a determinantal semi-invariant
$\ol{c}^V \in \SSI(Q/I, \beta)^{(j)}$. If this polynomial function is
a square, we likewise define the Pfaffian semi-invariant $\ol{\pf}^V
\in \SSI(Q/I, \beta)^{(j)}$. Similar to before, this is well-defined
up to a nonzero scalar since $K[\SRep(Q/I, \beta)^{(j)}]$ is a domain.

Now let $V'$ be a representation of $Q$ and let $V = V'/IV'$ be the
corresponding representation of $Q/I$. The Pfaffian semi-invariant
$\pf^{V'}$ restricts to a semi-invariant in $\SSI(Q/I, \beta)^{(j)}$
for all $j$. In particular, its image is $\ol{\pf}^{V}$, which can be
shown using the proof of \cite[Proposition 1]{relations}. So we have
the following result.

\begin{proposition} \label{proposition:pfaffianrelations} Suppose that
  the characteristic of $K$ is $0$. Let $Q$ be a symmetric quiver
  without oriented cycles and $I \subset KQ$ an admissible ideal. The
  Pfaffian semi-invariants $\ol{\pf}^V$ linearly span $\SSI(Q/I,
  \beta)^{(j)}$.
\end{proposition}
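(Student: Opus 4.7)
The plan is to deduce this from Theorem~\ref{pfaffians} by pulling semi-invariants back along the closed immersion $\SRep(Q/I, \beta)^{(j)} \hookrightarrow \SRep(Q, \beta)$ and then checking that each pulled-back Pfaffian generator is again Pfaffian. Since $\SG(Q, \beta)$ is connected, it fixes every irreducible component of $\SRep(Q/I, \beta)$, so $K[\SRep(Q/I, \beta)^{(j)}]$ inherits an $\SG(Q, \beta)$-action and the restriction map $K[\SRep(Q, \beta)] \twoheadrightarrow K[\SRep(Q/I, \beta)^{(j)}]$ (taken modulo nilpotents, per the conventions of Section~\ref{section:semiinvariantsQ/I}) is $\SG(Q, \beta)$-equivariant and surjective. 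In characteristic $0$ the group $\SG(Q, \beta)$ is linearly reductive, so taking invariants preserves this surjection, giving a surjection $\SSI(Q, \beta) \twoheadrightarrow \SSI(Q/I, \beta)^{(j)}$.

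By Theorem~\ref{pfaffians}, $\SSI(Q, \beta)$ is spanned by Pfaffians $\pf^{V'}$ for representations $V'$ of $Q$ with $\langle \dim V', \beta \rangle = 0$. Their images therefore already span $\SSI(Q/I, \beta)^{(j)}$, so everything reduces to showing that each such image is of the form $\ol{\pf}^V$ for some representation $V$ of $Q/I$. The natural candidate is $V = V'/IV'$, and I would approach the identification in two steps. First, at the determinantal level, compare the canonical resolution of $V'$ over $KQ$ (the one defining $c^{V'}$) with a minimal projective presentation $P_1 \to P_0 \to V \to 0$ over $KQ/I$ (the one defining $\ol{c}^V$). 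Because $V$ is the reduction of $V'$ modulo $I$ and projective covers over $KQ/I$ are obtained from those over $KQ$ via $P_x = P'_x/IP'_x$, one can exhibit a block-triangular change of basis on $\SRep(Q/I, \beta)^{(j)}$ that converts the truncated canonical resolution of $V'$ into the minimal presentation of $V$ up to an identity block on redundant summands; this yields $c^{V'}\big|_{\SRep(Q/I, \beta)^{(j)}} = \ol{c}^V$ up to a nonzero scalar. This is precisely the mechanism of the proof of \cite[Proposition 1]{relations}, which I would invoke. Second, extract square roots: since the component is reduced and irreducible, $K[\SRep(Q/I, \beta)^{(j)}]$ is a domain, and the identity $(\res^* \pf^{V'})^2 = c^{V'}|_{\SRep(Q/I, \beta)^{(j)}} = \ol{c}^V$ forces $\res^* \pf^{V'} = \ol{\pf}^V$ up to sign (which is absorbed into the scalar ambiguity in the definition of $\ol{\pf}^V$).

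The main obstacle is the determinantal comparison in the first step. The canonical resolution of $V'$ over $KQ$ is only a length-one resolution by $KQ$-projectives, whereas a minimal projective presentation of $V$ over $KQ/I$ genuinely uses $KQ/I$-projectives, and the two need not be immediately related by an obvious change of basis once one restricts to $\SRep(Q/I, \beta)$. The content of the argument from \cite{relations} is that, nevertheless, the superfluous part that distinguishes them contributes only an invertible block to the matrix, and hence only a nonzero scalar to the determinant; verifying this in the symmetric setting requires checking that the block-triangular change of basis can be performed compatibly with the bilinear forms fixing $\SRep(Q, \beta)$ inside $\Rep(Q^\circ, \beta)$, so that the Pfaffian structure is not disrupted in the process.
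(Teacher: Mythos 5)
Your proposal matches the paper's argument: restriction along $\SRep(Q/I, \beta)^{(j)} \hookrightarrow \SRep(Q, \beta)$ is $\SG(Q,\beta)$-equivariant, linear reductivity yields a surjection $\SSI(Q,\beta) \twoheadrightarrow \SSI(Q/I,\beta)^{(j)}$, and the image of each $\pf^{V'}$ is identified as $\ol{\pf}^{V'/IV'}$ by the mechanism of \cite[Proposition 1]{relations} together with the square-root argument in the domain $K[\SRep(Q/I,\beta)^{(j)}]$. The compatibility concern you raise at the end is not actually an obstacle, since the determinantal identity $c^{V'}|_{\Rep(Q/I,\beta)} = \ol{c}^V$ is established on the full representation variety before restricting to the symmetric subvariety.
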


For our applications, we shall only be interested in certain kinds of
irreducible components. A component $\Rep(Q/I, \beta)^{(j)}$ is {\bf
  faithful} if whenever $x \in KQ$ annihilates all representations in
$\Rep(Q/I, \beta)^{(j)}$, we have $x \in I$. We make similar
definitions for symmetric representation varieties.

\begin{theorem}[Derksen--Weyman] \label{theorem:SIrelations} Suppose
  that the characteristic of $K$ is $0$. Let $Q$ be an acyclic quiver
  and let $I \subset KQ$ be an admissible ideal. Suppose that
  $\Rep(Q/I, \beta)^{(j)}$ is a faithful component of $\Rep(Q/I,
  \beta)$. If $\ol{c}^V$ is nonzero on $\Rep(Q/I, \beta)$, then $\pdim
  V \le 1$.
\end{theorem}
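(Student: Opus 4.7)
I would prove the contrapositive: if $\pdim V \geq 2$, then $\ol{c}^V$ vanishes identically on the faithful component $\Rep(Q/I,\beta)^{(j)}$.

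Let $P_1 \xrightarrow{f} P_0 \to V \to 0$ be the minimal projective presentation, write $K = \im(f) = \ker(P_0 \to V)$, and set $\Omega = \ker(f)$, so that $\pdim V \geq 2$ is equivalent to $\Omega \neq 0$. Splitting this four-term sequence into $0 \to K \to P_0 \to V \to 0$ and $0 \to \Omega \to P_1 \to K \to 0$, applying $\hom(-,W)$, and using projectivity of $P_0$ and $P_1$, the two resulting long exact sequences assemble into an identification $\ker(\ol{d}^V_W) = \hom(V,W)$ and a short exact sequence
\[
0 \to \ext^1(V,W) \to \coker(\ol{d}^V_W) \to \im\bigl(\hom(P_1,W) \to \hom(\Omega,W)\bigr) \to 0,
\]
in which the rightmost map records the restrictions $\phi|_\Omega$ of morphisms $\phi \colon P_1 \to W$. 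In particular, $\ol{c}^V_W \neq 0$ forces every such $\phi$ to vanish on $\Omega$.

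Now write $P_1 = \bigoplus_\ell P_{y_\ell}$ as a sum of indecomposable projectives. A morphism $\phi \colon P_1 \to W$ is the same as a tuple $(w_\ell) \in \prod_\ell W(y_\ell)$, and sends $\xi = (\xi_\ell) \in \Omega \subseteq \bigoplus_\ell P_{y_\ell}$ to $\sum_\ell \xi_\ell \cdot w_\ell$, where $\xi_\ell$ is interpreted as an element of $KQ/I$. Letting $(w_\ell)$ run over tuples with a single nonzero entry, the condition "every $\phi$ kills $\Omega$" translates into: for every element of $\Omega$, each of its components $\xi_\ell$ lies in $\ann_{KQ/I}(W)$. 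Since $Q$ is acyclic and finite, $KQ/I$ is finite-dimensional, so picking any nonzero $\xi \in \Omega$ and a nonzero component $\xi_{\ell_0}$ produces a nonzero element of $KQ/I$. By faithfulness of the component, no nonzero element of $KQ/I$ annihilates every representation in $\Rep(Q/I,\beta)^{(j)}$, so the locus
\[
U = \bigl\{\,W \in \Rep(Q/I,\beta)^{(j)} : \xi_{\ell_0}\text{ does not act by zero on } W\,\bigr\}
\]
is a nonempty open subset; irreducibility of the component then makes $U$ dense. For every $W \in U$ the condition of the previous paragraph fails, so $\coker(\ol{d}^V_W) \neq 0$ and therefore $\ol{c}^V_W = 0$; by density, $\ol{c}^V$ vanishes on the whole component.

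The main technical step is the exact sequence for $\coker(\ol{d}^V_W)$ in the second paragraph: this is precisely where the passage from the classical setting $\gldim(KQ) \leq 1$ to the present setting $\gldim(KQ/I) > 1$ causes the original Derksen--Weyman argument to fail, and where the faithfulness hypothesis has to enter. The remaining steps are a direct translation between submodules of the second syzygy, elements of the path algebra, and the content of faithfulness.
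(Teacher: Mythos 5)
The paper does not include its own proof of this result---it simply cites Derksen and Weyman, ``Semi-invariants for quivers with relations''---so the comparison has to be on the merits of your argument and on whether it is the natural route. Your argument is correct. The homological core is right: from the two short exact sequences $0 \to \Omega \to P_1 \to K \to 0$ and $0 \to K \to P_0 \to V \to 0$, with $P_0,P_1$ projective, applying $\hom(-,W)$ and splicing gives exactly
\[
0 \to \ext^1(V,W) \to \coker(\ol{d}^V_W) \to \im\bigl(\hom(P_1,W)\to\hom(\Omega,W)\bigr)\to 0,
\]
and $\ker(\ol{d}^V_W)=\hom(V,W)$. The translation of the third term into annihilator conditions is also right: writing $P_1=\bigoplus_\ell P_{y_\ell}$ with $P_{y_\ell}=(KQ/I)e_{y_\ell}$, a morphism to $W$ is a tuple $(w_\ell)\in\prod_\ell W(y_\ell)$, its restriction to $\Omega$ sends $\xi=(\xi_\ell)$ to $\sum_\ell \xi_\ell w_\ell$, and varying over tuples supported at a single index shows the restriction maps are all zero iff every component $\xi_\ell$ of every $\xi\in\Omega$ lies in $\ann_{KQ/I}(W)$. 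Picking a nonzero $\xi_{\ell_0}\in KQ/I$ (possible exactly when $\Omega\neq 0$, i.e.\ $\pdim V\geq 2$, since $P_1\to K$ is a projective cover) and invoking faithfulness produces a nonempty open, hence dense, locus $U$ in the irreducible component on which $\xi_{\ell_0}$ acts nontrivially; on $U$ the cokernel is nonzero, so $\ol{c}^V$ vanishes on $U$ and hence on the whole component.

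Two small notes. First, your statement that the exact sequence ``is precisely where faithfulness has to enter'' is slightly off: the exact sequence itself is pure homological algebra valid for any $W$; faithfulness enters only afterwards, to show the third term is not forced to vanish on a dense subset. Second, as stated in the paper the theorem has hypothesis ``$\ol{c}^V$ nonzero on $\Rep(Q/I,\beta)$'' while your contrapositive shows vanishing only on the faithful component; in the application these coincide (the semi-invariant in question is nonzero precisely on the faithful component), and that is also the form one needs, but it is worth being aware the literal contrapositive you prove is ``nonzero on the faithful component $\Rightarrow \pdim V\leq 1$.''
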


\begin{proof} See \cite[Theorem 1]{relations}. \end{proof}

\subsubsection{Reminders on semicontinuity and
  genericity.} \label{section:semicontinuity} 

Let $X$ be a topological space and $P$ be a partially ordered set. A
function $f \colon X \to P$ is {\bf upper semicontinuous} if the sets
$X_{< n} = \{x \in X \mid f(x) < n\}$ are open for all $n \in P$. We
shall mostly be interested in the case when $P = \N$, so that the set
where $f$ attains its minimum is open, or the case when $P = \N^{Q_0}$
for some vertex set $Q_0$ with the partial order $(d_x)_{x \in Q_0}
\le (d'_x)_{x \in Q_0}$ if and only if $d_x \le d'_x$ for all $x \in
Q_0$. We list here some functions on representation varieties that are
upper semicontinuous with references for proofs. We will use these
facts without explicit mention.

Let $Q/I$ be a quiver with relations and let $X = \Rep(Q/I, \alpha)$
and $Y = \Rep(Q/I, \beta)$ for some dimension vectors $\alpha$ and
$\beta$. The function $\pdim \colon X \to \N$ that assigns a module to
its projective dimension is upper semicontinuous \cite[Lemma
2.1]{salmeron}. By duality, the same is true for injective
dimension. For any given $i$, the function $\ext^i \colon X \times Y
\to \N$ given by $(M,N) \mapsto \dim_K \ext^i_{Q/I}(M,N)$ is upper
semicontinuous \cite[Lemma 1.2]{salmeron}. In particular, if we fix a
representation $M \in X$, the function $N \mapsto \dim_K
\ext^i_{Q/I}(M,N)$ is upper semicontinuous, and there is a similar
statement when fixing the other argument. Given two irreducible
components $C \subseteq X$ and $C' \subseteq Y$, we will use
$\ext^i(C, C')$ to denote the minimum value of $\ext^i$ restricted to
$C \times C'$. In this case, the minimum is attained on an open dense
subset.

For representations $M$ and $N$, the function $\hom_{Q/I}(M,N) \to
\N^{Q_0}$ given by $\phi \mapsto \dim \ker \phi$ is upper
semicontinuous. Hence there is a unique maximal dimension vector
$\gamma$ such that the set of linear maps $M \to N$ with rank $\gamma$
is open. We call this the {\bf generic rank} for $M$ and $N$. See
\cite[Lemma 5.1]{generalreps} and its proof for details. Similarly, we
can form the subbundle
\[
\hom(Q/I, C, C') = \{(\phi, M, N) \in \hom_K(K^\alpha, K^\beta) \times
X \times Y \mid \phi \in \hom_{Q/I}(M,N) \},
\]
and the function that assigns to a triple $(\phi, M, N)$ the dimension
of $\ker \phi$ is upper semicontinuous, so we can define the generic
rank for two components $C$ and $C'$. We can also mix and match
modules and irreducible components.

Finally, given a dominant morphism of two irreducible varieties $f
\colon X \to Y$, the set $\{y \in Y \mid \dim f^{-1}(y) = \dim X -
\dim Y\}$ contains a dense open subset of $Y$.

\subsection{Global dimension 2.}

Suppose that the characteristic is arbitrary and also that $\gldim Q/I
\le 2$ for this section. Let $Q_2$ be a set of homogeneous minimal
relations, i.e., for all $r \in Q_2$, we have that $r$ is not
contained in the ideal generated by $Q_2 \setminus \{r\}$. For $r \in
Q_2$, let $tr$ and $hr$ be the beginning and ending, respectively, of
the paths which are the summands in $r$. There is a canonical
resolution just as in the case of a quiver $Q$:
\begin{align} \label{eqn:relationscanonical}
0 \to \bigoplus_{r \in Q_2} V(tr) \otimes P_{hr} \to \bigoplus_{a \in
  Q_1} V(ta) \otimes P_{ha} \to \bigoplus_{x \in Q_0} V(x) \otimes P_x
\to V \to 0.
\end{align}
It follows that the Euler form for $Q/I$ \eqref{eqn:eulerform:Q/I} can
be defined as
\[
\langle \alpha, \beta \rangle_I = \sum_{x \in Q_0} \alpha(x) \beta(x)
- \sum_{a \in Q_1} \alpha(ta) \beta(ha) + \sum_{r \in Q_2} \alpha(tr)
\beta(hr).
\]

\begin{remark} \label{remark:weights} Suppose $\dim V = \alpha$ and
  $\pdim V \le 1$. We claim that the weight $\sigma_\alpha$ of the
  semi-invariant $\ol{c}^V$ on $\Rep(Q/I, \beta)$ is given by
  $\sigma_\alpha(x) = \langle \alpha, \eps_x \rangle_I$.

  First, let $0 \to P_1 \to P_0 \to V \to 0$ be a projective
  resolution, and choose $W \in \Rep(Q/I, \beta)$. For $x \in Q_0$, we
  know by \eqref{eqn:relationscanonical} that the number of times that
  $P_x$ appears in $P_0$ minus the number of times that $P_x$ appears in
  $P_1$ is $\langle \alpha, \eps_x \rangle_I$. Hence when we apply
  $\hom(-, W)$ to the projective resolution, we can use the proof of
  \cite[Lemma 1.4]{schofield} to conclude our desired result. 
\end{remark}

\begin{remark} We will use the following facts repeatedly without
  explicit mention. If $V$ is a module with $\pdim V \le 1$, then
  $\pdim V' \le 1$ for any submodule $V' \subseteq V$. To see this,
  first note that $\pdim V \le 1$ is equivalent to the functor
  $\ext^2(V, -)$ being identically 0. Using that $\gldim Q/I \le 2$,
  we see that $\ext^2(V, W) = 0$ implies that $\ext^2(V', W) = 0$ for
  all modules $W$. Dually, if $\idim V \le 1$, then $\idim V/V' \le 1$
  for any submodule $V' \subseteq V$.
\end{remark}

\begin{proposition} \label{prop:dimension} Suppose $\gldim Q/I \le 2$,
  and let $M$ be a representation of $Q/I$ such that $\ext^2(M,M) =
  0$. Set $\alpha = \dim M$. Then $M$ is a nonsingular point of
  $\Rep(Q/I, \alpha)$, and 
  \begin{align*}
    \dim_M \Rep(Q/I, \alpha) &= \sum_{a \in Q_1} \alpha(ta) \alpha(ha)
    - \sum_{r \in Q_2} \alpha(tr) \alpha(hr).
  \end{align*}
  Here $\dim_x X$ means the local dimension of $X$ at $x \in X$.
\end{proposition}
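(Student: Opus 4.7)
The plan is to sandwich the local dimension $\dim_M \Rep(Q/I,\alpha)$ between a lower bound coming from Krull's height theorem and an upper bound coming from the Zariski tangent space at $M$, and then use the vanishing of $\ext^2(M,M)$ to force the two bounds to coincide.

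For the lower bound, I would note that $\Rep(Q/I,\alpha)$ is the subscheme of the smooth affine space $\Rep(Q,\alpha)$ (of dimension $\sum_{a \in Q_1} \alpha(ta)\alpha(ha)$) cut out by the ideal generated by the entries of each minimal relation $r \in Q_2$ evaluated on representations. Since each such relation contributes $\alpha(tr)\alpha(hr)$ scalar equations, Krull's height theorem yields
\[
\dim_M \Rep(Q/I,\alpha) \ge \sum_{a \in Q_1} \alpha(ta)\alpha(ha) - \sum_{r \in Q_2} \alpha(tr)\alpha(hr).
\]

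For the upper bound, I would identify the Zariski tangent space at $M$ with the kernel of a $K$-linear map
\[
\delta_M \colon \bigoplus_{a \in Q_1} \hom(M(ta), M(ha)) \longrightarrow \bigoplus_{r \in Q_2} \hom(M(tr), M(hr)),
\]
where $\delta_M$ sends a tuple $(N_a)$ to the coefficient of $\varepsilon$ in $r(M+\varepsilon N) \bmod \varepsilon^2$, computed one relation at a time. The crux is to observe that $\delta_M$ coincides with the degree-$2$ differential in the complex obtained by applying $\hom_{KQ/I}(-,M)$ to the canonical projective resolution \eqref{eqn:relationscanonical}, using the standard identification $\hom_{KQ/I}(V(y) \otimes P_x, M) = \hom_K(V(y), M(x))$. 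Since the cohomology of that complex computes $\ext^\bullet(M,M)$ and $\coker \delta_M = \ext^2(M,M)=0$ by hypothesis, $\delta_M$ is surjective, whence
\[
\dim T_M \Rep(Q/I,\alpha) = \sum_{a \in Q_1} \alpha(ta)\alpha(ha) - \sum_{r \in Q_2} \alpha(tr)\alpha(hr).
\]

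Combining the two bounds via $\dim_M \Rep(Q/I,\alpha) \le \dim T_M \Rep(Q/I,\alpha)$ forces equality throughout and shows that $M$ is a nonsingular point. The main technical task is verifying that $\delta_M$ genuinely agrees with the differential induced by the canonical resolution: one writes each minimal relation as $r = \sum_i c_i\, a_{i,n_i}\cdots a_{i,1}$, expands $r(M+\varepsilon N)$ by the Leibniz rule, and compares term by term with the explicit formula for the map $\bigoplus_r V(tr)\otimes P_{hr} \to \bigoplus_a V(ta)\otimes P_{ha}$ appearing in \eqref{eqn:relationscanonical}. This is a direct but careful bookkeeping exercise, and once it is in place, the proposition follows at once from the two dimension bounds.
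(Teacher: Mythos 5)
Your proposal is correct and reflects the standard argument: the paper itself merely cites Bobi\'nski--Skowro\'nski for this fact, and their proof is precisely the sandwich you describe, with Krull's height theorem giving the lower bound on $\dim_M\Rep(Q/I,\alpha)$ (since $\Rep(Q/I,\alpha)$ is cut out of the affine space $\Rep(Q,\alpha)$ by $\sum_{r}\alpha(tr)\alpha(hr)$ scalar equations) and the Zariski tangent space giving the upper bound, forced to agree because $\coker\delta_M\cong\ext^2(M,M)=0$. The step you defer to ``bookkeeping''---checking that the Jacobian $\delta_M$ of the relations really is the map $\hom(d_2,M)$ coming from the canonical resolution \eqref{eqn:relationscanonical}, under the identification $\hom_{KQ/I}(V(y)\otimes P_x,M)\cong\hom_K(V(y),M(x))$---is indeed the only substantive verification, and you have named it correctly; once it is carried out the proposition follows exactly as you say.
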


\begin{proof} See \cite[Proposition 2.2]{bobinski}.
\end{proof}

\begin{definition}
Set 
\begin{align*}
  Q^e_0 &= \{0,1\} \times Q_0,\\ 
  Q^e_1 &= \{((0,ta), (0,ha)) \mid a \in
  Q_1\} \cup \{((1,ta), (0,ha)) \mid a \in Q_1\} \cup \{((1,ta),
  (1,ha)) \mid a \in Q_1\}.
\end{align*}
There is an abuse of notation here since two arrows might have the
same head and tail, but we hope the meaning is clear. Define an ideal
of relations $I^e$ as follows. For every relation $\sum_p \lambda_p
a_{p_{d(p)}} \cdots a_{p_2} a_{p_1}$ in $I$, we take the homogeneous
components of the relations $\sum_p \lambda_p a'_{p_{d(p)}} \cdots
a'_{p_2} a'_{p_1}$ where $a'_x = ((0,ta), (0,ha)) + ((1,ta), (0,ha)) +
((1,ta), (1,ha))$. We call $Q^e/I^e$ the {\bf extension quiver} of
$Q/I$.
\end{definition}

\begin{remark}
  Given a representation $V$ of $Q^e/I^e$, there is an associated
  representation $V'$ of $Q/I$ along with a choice of submodule $V''
  \subset V'$ by setting $V'_x = V_{(0,x)} \oplus V_{(1,x)}$ and
  $V''_x = V_{(0,x)}$. Conversely, given an inclusion of $Q/I$-modules
  $V'' \subset V'$, one can associate to it a representation $V$ of
  $Q^e/I^e$ by picking a basis for $V''$ and extending it to a basis
  for $V'$. There is some ambiguity about this choice of basis, but it
  will not affect our discussions.
\end{remark}

\begin{proposition} \label{proposition:extensions} If $\gldim Q/I \le
  2$, then $\gldim Q^e/I^e \le 2$.
\end{proposition}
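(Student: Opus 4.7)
The plan is to identify $A := KQ^e/I^e$ with a triangular matrix ring and bound the projective dimension of each simple $A$-module by $2$. Using the orthogonal idempotents $e_0 = \sum_{x \in Q_0} \epsilon_{(0,x)}$ and $e_1 = \sum_{x \in Q_0} \epsilon_{(1,x)}$ in $A$ (where $\epsilon_u$ is the trivial path at vertex $u$), I would verify that $e_0 A e_0 \cong e_1 A e_1 \cong R := KQ/I$, that $e_1 A e_0 = 0$ (since no path in $Q^e$ runs from a ``$0$-part'' vertex to a ``$1$-part'' vertex), and that $M := e_0 A e_1$ is the $R$-bimodule with generators the bridge arrows $\{\beta_a\}_{a \in Q_1}$ modulo the bridge relations inside $I^e$. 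Thus $A \cong \begin{pmatrix} R & M \\ 0 & R \end{pmatrix}$, and an $A$-module is the same as a triple $(X, Y, \phi)$ with $X, Y$ in $R\mathrm{-mod}$ and $\phi \colon M \otimes_R Y \to X$ an $R$-linear map; the exact functors $A_0 \colon X \mapsto (X, 0, 0)$ and $\sigma \colon Y \mapsto (M \otimes_R Y, Y, \mathrm{id})$ send $P^R_x$ to $P^e_{(0,x)}$ and $P^R_y$ to $P^e_{(1,y)}$ respectively.

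The key technical input is the short exact sequence of $R$-bimodules
\[
0 \to M \to \bigoplus_{x \in Q_0} R e_x \otimes_K e_x R \to R \to 0,
\]
where the second map is multiplication $p \otimes q \mapsto pq$, obtained by matching the bridge generators $\beta_a \in M$ with the differentials $da = e_{ha} \otimes a - a \otimes e_{ta}$ and the bridge relations in $I^e$ with the elements $dr$ for $r \in Q_2$. Since $R = \bigoplus_x P^R_x$ is projective as a left module over itself, this sequence splits on the left, and hence $M$ is left-$R$-projective (in particular right-$R$-flat); by symmetry the same holds on the other side. Applying $- \otimes_R X$ to the sequence then yields, for any left $R$-module $X$, an exact sequence $0 \to M \otimes_R X \to \bigoplus_x P^R_x \otimes_K X(x) \to X \to 0$ whose middle term is $R$-projective, producing the key syzygy-shift bound $\pdim_R (M \otimes_R X) \le \max(0, \pdim_R X - 1)$.

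To bound the projective dimensions of simples: for the simple $A$-module at $(0, x)$, equal to $A_0(S^R_x)$, the exactness and projectivity-preservation of $A_0$ give $\pdim_A \le \pdim_R S^R_x \le 2$. For the simple at $(1, x)$, equal to the $A$-module $(0, S^R_x, 0)$, the projective cover $P^e_{(1,x)} = \sigma(P^R_x)$ has kernel $K = (M e_x,\, \rad P^R_x,\, \iota)$, where $\iota$ is the inclusion $M \otimes_R \rad P^R_x \hookrightarrow M \otimes_R P^R_x = M e_x$ (injective by flatness of $M$). The natural map $\sigma(\rad P^R_x) \hookrightarrow K$ has cokernel $A_0(M \otimes_R S^R_x)$ by right-exactness of $M \otimes_R -$ applied to $0 \to \rad P^R_x \to P^R_x \to S^R_x \to 0$, producing a short exact sequence
\[
0 \to \sigma(\rad P^R_x) \to K \to A_0(M \otimes_R S^R_x) \to 0.
\]
Combining $\pdim_A \sigma(\rad P^R_x) \le \pdim_R \rad P^R_x \le 1$ with $\pdim_A A_0(M \otimes_R S^R_x) \le \pdim_R (M \otimes_R S^R_x) \le 1$ (by the syzygy-shift bound) yields $\pdim_A K \le 1$ and hence $\pdim_A \le 2$ for this simple as well. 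The main technical obstacle will be rigorously identifying $M$ with the kernel of the bimodule multiplication map, which is where the precise form of $I^e$ (specifically, that the ``bridge'' relations match the differentials $dr$ for $r \in Q_2$) is used in an essential way.
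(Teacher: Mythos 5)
Your proof is correct and takes a genuinely different, more structural route than the paper's. The paper's argument writes down the canonical length-$2$ complex of projectives over $Q^e/I^e$ ending in $S_{(1,x)}$ (with terms indexed by $Q_1$ and by $Q_2$, each split into a $(0,\cdot)$ and a $(1,\cdot)$ summand) and verifies exactness by a direct computation with paths in $Q^e$, invoking $\gldim Q/I \le 2$ only at the final step to obtain injectivity of $d_2$. You instead package $KQ^e/I^e$ as the triangular matrix algebra $\begin{pmatrix} R & M \\ 0 & R\end{pmatrix}$ and reduce everything to the $R$-bimodule short exact sequence $0 \to M \to \bigoplus_x Re_x\otimes_K e_xR \to R \to 0$: from it you get that $M$ is projective on each side (so $\sigma$ is exact and projective-preserving) and that $M\otimes_R(-)$ is a syzygy shift, after which the bounds on $\pdim_A$ of the two families of simples follow formally. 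The ``main technical obstacle'' you flag --- identifying $M = e_0Ae_1$ with $\ker\bigl(\mu\colon \bigoplus_x Re_x\otimes_Ke_xR \to R\bigr)$ via $\beta_a\mapsto e_{ha}\otimes a - a\otimes e_{ta}$, with the bridge components of the lifted relations matching the Leibniz differentials $dr$ --- is indeed the crux, and it amounts to exactness of $\bigoplus_{r\in Q_2}Re_{hr}\otimes_Ke_{tr}R \to \bigoplus_{a\in Q_1}Re_{ha}\otimes_Ke_{ta}R\to\bigoplus_xRe_x\otimes_Ke_xR\to R\to 0$ at the middle spot. This is a standard fact about path algebras with admissible relations and a minimal relation set $Q_2$ (it is precisely what makes the paper's canonical complex \eqref{eqn:relationscanonical} exact at the $Q_1$ position, and it holds with no hypothesis on $\gldim Q/I$), so there is no gap, but you should cite or prove it. In exchange for invoking this bimodule fact, your argument isolates all the $Q^e$-path bookkeeping into that single identification and replaces the paper's case analysis of $\ker d_1$ and $\ker d_2$ by the formal syzygy-shift estimate $\pdim_R(M\otimes_R X)\le\max(0,\pdim_R X - 1)$; the paper's hand computation is more elementary but more delicate.
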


\begin{proof} It will be enough to show that every simple
  representation $S_{(n,x)}$ has projective dimension at most 2. The
  projective modules $P_{(0,x)}$ are supported in $\{0\} \times Q$, so
  $\pdim_{Q^e/I^e} S_{(0,x)} = \pdim_{Q/I} S_x \le 2$ by
  assumption. Otherwise, we claim that
  \[
  0 \to \bigoplus_{r \in Q_2,\ tr = x} P_{(0,hr)} \oplus P_{(1,hr)}
  \xrightarrow{d_2} \bigoplus_{a \in Q_1,\ ta = x} P_{(0,ha)} \oplus
  P_{(1,ha)} \xrightarrow{d_1} P_{(1,x)} \xrightarrow{d_0} S_{(1,x)}
  \to 0.
  \]
  is a projective resolution. It is clear that $d_0$ is surjective and
  that $\im d_1 = \ker d_0$. An element in $\ker d_1$ is a linear
  combination of paths $p_{(n,a)}$ starting at $(n,ha)$ for various $n
  \in \{0,1\}$ and $a \in Q_1$ with $ta = x$, all of which end at a
  common vertex $(m,y)$, such that appending $a$ to the beginning of
  $p_{(n,a)}$ and taking the sum gives 0, i.e., is a relation between
  $(1,x)$ and $(m,y)$ in $I^e$. Hence $\im d_2 = \ker d_1$. 

  Finally, we show that $d_2$ is injective. An element in $\ker d_2$
  is a linear combination of paths $p_{(n,r)}$ starting at $(n,hr)$
  for various $n \in \{0,1\}$ and $r \in Q_2$ with $tr = x$ and $r =
  \sum_p \lambda_p a_{p_{d(p)}} \cdots a_{p_2} a_{p_1}$, all of which
  end at a common vertex $(m,y)$, such that appending $\sum_p
  \lambda_p a_{p_{d(p)}} \cdots a_{p_2}$ to the beginning of each
  $p_{(n,r)}$ and taking the sum gives 0. If this element is nonzero,
  this will imply that there is a minimal relation in $I^e$ such that
  the beginning of one of its paths coincides with the ending of one
  of the paths of $r$. This contradicts the fact that $\gldim Q/I \le
  2$ and our definition of $I^e$.
\end{proof}

\begin{proposition} \label{proposition:extensions2} Let $N \subset M$
  be representations of $Q/I$ so that $M' = (N \subset M)$ is
  naturally a representation of $Q^e/I^e$. If $\ext^2(M/N, M/N) =
  \ext^2(M, N) = 0$, then $\ext^2_{Q^e/I^e}(M', M') = 0$.
\end{proposition}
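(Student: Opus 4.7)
The plan is to compute $\ext^2_{Q^e/I^e}(M',M')$ directly from the canonical resolution~\eqref{eqn:relationscanonical} of $M'$, exploiting the level structure of $Q^e$. Since $\gldim Q^e/I^e \le 2$ by Proposition~\ref{proposition:extensions}, that resolution is projective and $\ext^2_{Q^e/I^e}(M',M')$ is the cokernel of the differential $d_2$ from position~$1$ to position~$2$ in the complex obtained by applying $\hom_{Q^e/I^e}(-,M')$.

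Because $M'((0,x)) = N_x$ and $M'((1,x)) = (M/N)_x$, and because arrows and relations of $Q^e$ come in only the three level-types $(0,0)$, $(1,0)$, $(1,1)$, the summands of that Hom complex split at positions~$1$ and~$2$ as
\[
A_{00} \oplus A_{10} \oplus A_{11} \qquad\text{and}\qquad R_{00} \oplus R_{10} \oplus R_{11},
\]
with the type-$(0,0)$ pieces built from $\hom(N,N)$, the type-$(1,0)$ pieces from $\hom(M/N,N)$, and the type-$(1,1)$ pieces from $\hom(M/N,M/N)$. Inspecting the formula for $d_2$ shows that its block matrix has diagonal blocks equal to the position-two differentials in the canonical resolutions over $Q/I$ of $N$ against $N$, of $M/N$ against $N$, and of $M/N$ against $M/N$; the only nontrivial off-diagonal entries go from $A_{00}$ and $A_{11}$ into $R_{10}$, and they are built from the extension datum $O_a:(M/N)_{ta}\to N_{ha}$ of $0\to N\to M\to M/N\to 0$, because only the single jump arrow in a type-$(1,0)$ relation can contribute off-diagonally.

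Taking cokernels row by row therefore gives three pieces. The $R_{00}$-piece is $\ext^2_{Q/I}(N,N)$, which vanishes: applying $\hom_{Q/I}(-,N)$ to $0\to N\to M\to M/N\to 0$ together with the hypothesis $\ext^2_{Q/I}(M,N)=0$ yields $\ext^2_{Q/I}(N,N)=0$. The $R_{11}$-piece is $\ext^2_{Q/I}(M/N,M/N)=0$ by hypothesis. For the $R_{10}$-piece, quotienting by the diagonal block alone gives $\ext^2_{Q/I}(M/N,N)$, and one must further factor out the image of the off-diagonal cochain map $\delta:A_{00}\to R_{10}$. By direct inspection $\delta$ is the cochain-level formula for Yoneda right-multiplication by the $1$-cocycle $(O_a)_a$ representing the extension class $[M]\in\ext^1_{Q/I}(M/N,N)$, so it descends on cohomology to the connecting homomorphism $\ext^1_{Q/I}(N,N)\to\ext^2_{Q/I}(M/N,N)$ in the long exact sequence of $\hom_{Q/I}(-,N)$ applied to $0\to N\to M\to M/N\to 0$. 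The next term of that sequence is $\ext^2_{Q/I}(M,N)=0$, so this connecting map is surjective; hence $\delta$ alone already surjects onto $\ext^2_{Q/I}(M/N,N)$, the $R_{10}$-piece of $\coker d_2$ also vanishes, and $\ext^2_{Q^e/I^e}(M',M')=0$.

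The main technical point is the identification of $\delta$ with Yoneda multiplication by $[M]$ at the cohomology level; this is a straightforward bookkeeping calculation tracking the combinatorics of jump positions in the type-$(1,0)$ relations of $I^e$, and once it is done the rest of the argument is just three applications of the long exact sequence of $\hom_{Q/I}(-,N)$ on $0\to N\to M\to M/N\to 0$.
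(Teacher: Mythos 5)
Your proof is correct, but it takes a genuinely different (and more elaborate) route than the paper's. The paper decomposes $\hom(P_1,M')\to\hom(P_2,M')$ only by the \emph{level of the head vertex} indexing the projective summands, yielding a $2\times 2$ block upper-triangular map. Its ``level $0$'' diagonal block is $\hom(P_1^{(0)},M')\to\hom(P_2^{(0)},M')$; since $\hom_{Q^e/I^e}(P_{(0,x)},M')\cong\hom_{Q/I}(P_x,N)$ and the level-$0$ summands of $P_1,P_2$ reassemble exactly to positions $1,2$ of the canonical resolution of $M$ over $Q/I$ (arrows/relations of type $(0,0)$ contributing $N(\cdot)\otimes P_{(0,\cdot)}$ and those of type $(1,0)$ contributing $(M/N)(\cdot)\otimes P_{(0,\cdot)}$, which sum to $M(\cdot)\otimes P_{(0,\cdot)}$), this block has cokernel $\ext^2_{Q/I}(M,N)=0$ on the nose. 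The ``level $1$'' block has cokernel $\ext^2_{Q/I}(M/N,M/N)=0$, and the single off-diagonal block (level-$1$ sources into level-$0$ targets) is harmless by upper-triangularity. You instead refine to the $3\times 3$ decomposition by $(\text{source level},\text{target level})$, obtaining cokernels $\ext^2(N,N)$, $\ext^2(M/N,N)$, $\ext^2(M/N,M/N)$ on the diagonals, and then must identify the off-diagonal block $\delta_{00}\colon A_{00}\to R_{10}$ with the cochain-level Yoneda product against the extension cocycle $(O_a)_a$ so that it induces the connecting map $\ext^1(N,N)\to\ext^2(M/N,N)$, whose surjectivity you read off from $\ext^2(M,N)=0$. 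In effect, the paper's coarser split absorbs your $R_{00}$, $R_{10}$ and $\delta_{00}$ into a single piece computing $\ext^2(M,N)$, which saves you both the Yoneda-product bookkeeping and the two extra long-exact-sequence arguments (to get $\ext^2(N,N)=0$ and to handle $\ext^2(M/N,N)$). Your route works and is instructive about where the hypothesis $\ext^2(M,N)=0$ actually enters, but the paper's is tighter; if you keep your version, do spell out that $\delta_{00}$ sends coboundaries in $A_{00}$ to coboundaries in $R_{10}$ (this uses the cocycle identity $\sum_p\lambda_p\,N_{a_{p_d}}\cdots O_{a_{p_k}}\cdots(M/N)_{a_{p_1}}=0$ for each relation), since that is exactly the point that makes the map on cohomology well-defined.
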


\begin{proof} Let $P_\bullet \to M' \to 0$ be the canonical resolution
  \eqref{eqn:relationscanonical} of $M'$. Write $P_i^{(n)}$ for the
  summands of $P_i$ starting at vertices of $\{n\} \times Q_0$. Since
  $P_3 = 0$, it is enough to show that $\hom(P_1, M') \to \hom(P_2,
  M')$ is surjective. This homomorphism can be broken up into two
  pieces.  The first piece is $\hom(P_1^{(0)}, M') \to \hom(P_2^{(0)},
  M')$, which is the same thing as $\hom(P_1^{(0)}, N) \to
  \hom(P_2^{(0)}, N)$, whose cokernel can be identified with
  $\ext^2_{Q/I}(M, N) = 0$. Hence this part is surjective, and the
  second piece is $\hom(P_1^{(1)}, M') \to \hom(P_2^{(1)}, M')$, which
  is the same as $\hom(P_1^{(1)}, M/N) \to \hom(P_2^{(1)}, M/N)$. But
  this map is also surjective since the cokernel of this map can be
  identified with $\ext^2_{Q/I}(M/N, M/N) = 0$.
\end{proof}

\begin{example} 
  Let $Q/I$ be the quiver $1 \xrightarrow{\alpha} 2
  \xrightarrow{\beta} 3$ with the relation $\beta \alpha = 0$. Let $M$
  be the representation $K \to 0 \to K$ and let $N$ be the
  subrepresentation $0 \to 0 \to K$. The projective resolution for $M'
  = (N \subset M)$ for $Q^e/I^e$ is
  \[
  0 \to P_{(1,3)} \oplus P_{(0,3)} \to P_{(0,2)} \oplus P_{(1,2)} \to
  P_{(1,1)} \oplus P_{(0,3)} \to M' \to 0.
  \]
  The first map in the above proof becomes
  \begin{align*}
  0 = \hom(P_1^{(0)}, N) &\to \hom(P_2^{(0)}, N) = K
  \end{align*}
  which we naturally think of as $\ext^2_{Q/I}(M,N) = K$ so that
  $\ext^2(M',M') = K$. In this example, it is easy to produce a
  nonzero element of $\ext^2(M',M')$:
  \[
  0 \to \begin{array}{ccc} K & 0 & 0 \\ 0 & 0 & K \end{array}
  \to \begin{array}{ccc} K & 0 & 0 \\ 0 & 0 & 0 \end{array}
  \oplus \begin{array}{ccc} 0 & K & 0 \\ 0 & K & K \end{array}
  \to \begin{array}{ccc} K & K & 0 \\ 0 & K & 0 \end{array}
  \oplus \begin{array}{ccc} 0 & 0 & 0 \\ 0 & 0 & K \end{array}
  \to \begin{array}{ccc} K & 0 & 0 \\ 0 & 0 & K \end{array} \to 0,
  \]
  where we have drawn $Q^e/I^e$ as
  \[
  \xymatrix @-1.2pc { (1,1) \ar[r] \ar[rd] & (1,2) \ar[r] \ar[rd] & (1,3) \\
    (0,1) \ar[r] & (0,2) \ar[r] & (0,3) },
  \]
  and the arrows in each direct summand are nonzero whenever possible.
  If we instead took $N = (K \to 0 \to 0)$, then $\ext^2(M', M') = 0$.
\end{example}

\subsection{Quiver Grassmannians.} \label{section:quivergrass}

Suppose that the characteristic is arbitrary except in
Proposition~\ref{proposition:symmetricgrassmannian} where the
characteristic is different from 2. We continue to use the notation of
the previous section. Given two nonnegative integers $k \le n$,
$\Gr(k,n)$ denotes the {\bf Grassmannian}, a variety parametrizing the
$k$-dimensional subspaces of a fixed $n$-dimensional vector space. It
is a projective variety of dimension $k(n-k)$. Given a quiver with
relations $Q/I$, a module $W$, and a dimension vector $\gamma$, we let
$\Gr(\gamma, W)$ be the {\bf quiver Grassmannian}, which is the
variety of submodules of $W$ of dimension $\gamma$. This is a
projective variety and the calculation of its dimension in some
special cases will be the subject of our attention for this
section. We will only be using topological properties of this variety,
so its scheme structure will not play a role in this paper.

Let $\beta$ be a dimension vector and let $\Rep(Q/I, \beta)'$ be the
closure inside $\Rep(Q/I, \beta)$ of points corresponding to
representations with injective dimension at most 1, and let $W$ be
such a representation. Fix a dimension vector $\gamma \le \beta$, and
let $\Gr(\gamma, W)'$ denote the subvariety of the quiver Grassmannian
consisting of submodules $U \subseteq W$ such that $\dim U = \gamma$
and $\idim U \le 1$.

\begin{theorem} \label{theorem:grassmannian} With the notation above,
  suppose that $\Gr(\gamma, W)'$ is nonempty for a general $W$ with
  $\idim W \le 1$. Then there is a nonempty open set $\Omega \subseteq
  \Rep(Q/I, \beta)'$ such that $\dim \Gr(\gamma, W)' = \langle \gamma,
  \beta - \gamma \rangle_I$ for $W \in \Omega$.
\end{theorem}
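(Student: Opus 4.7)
The plan is to realize $\Gr(\gamma, W)'$ as the generic fiber of a projection from an incidence variety built from the extension quiver $Q^e/I^e$, and to compute the dimensions of the base and of the total space via Proposition~\ref{prop:dimension}.

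First I would fix a parabolic subgroup $P \subset G := \GL(Q^\circ, \beta)$ stabilizing the subspace $\bigoplus_{x \in Q_0} K^{\gamma(x)}$ of $\bigoplus_{x \in Q_0} K^{\beta(x)}$, and form the incidence variety
\[
\mathcal{T} := \{(W, U) \in \Rep(Q/I, \beta) \times G/P : W \cdot U \subseteq U\}.
\]
Writing each $W$ in block-upper-triangular form with respect to this subspace yields a $G$-equivariant isomorphism $\mathcal{T} \cong G \times^P \Rep(Q^e/I^e, \alpha)$, where $\alpha(0,x) = \gamma(x)$ and $\alpha(1,x) = \beta(x) - \gamma(x)$. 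Let $\mathcal{T}^\circ \subseteq \mathcal{T}$ be the open subset cut out by $\idim U \le 1$ and $\idim W \le 1$; by upper semicontinuity it corresponds to a $P$-invariant open subset $\Omega^e \subseteq \Rep(Q^e/I^e, \alpha)$. The projection $\pi \colon \mathcal{T}^\circ \to \Rep(Q/I, \beta)'$ has fiber $\Gr(\gamma, W)'$ over $W$, and by the nonemptiness hypothesis it is dominant on a suitable irreducible component.

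Next I would establish smoothness and compute the local dimensions. For every $M' = (U \subset W) \in \Omega^e$, the dual of the injective-dimension remark preceding Proposition~\ref{prop:dimension} gives $\idim W/U \le 1$, so $\ext^2_{Q/I}(W, U) = 0$ and $\ext^2_{Q/I}(W/U, W/U) = 0$. Proposition~\ref{proposition:extensions2} then yields $\ext^2_{Q^e/I^e}(M', M') = 0$, and Propositions~\ref{proposition:extensions} and \ref{prop:dimension} imply that $M'$ is a nonsingular point of $\Rep(Q^e/I^e, \alpha)$ with local dimension given by the formula in terms of $\alpha$, $Q^e_1$, $Q^e_2$. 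The same proposition, applied to a general $W$ with $\idim W \le 1$ (so that $\ext^2_{Q/I}(W,W)=0$), computes the local dimension of $\Rep(Q/I, \beta)'$ at $W$.

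Finally, the three types of arrows in $Q^e_1$ and the three homogeneous components of each relation in $Q^e_2$, combined with the expansion of $\beta(ta)\beta(ha)$ and $\beta(tr)\beta(hr)$ in terms of $\gamma$ and $\beta - \gamma$, give a direct computation
\[
\dim \Rep(Q^e/I^e, \alpha) - \dim \Rep(Q/I, \beta)' = \langle \gamma, \beta - \gamma\rangle_I - \sum_{x \in Q_0} \gamma(x)(\beta(x) - \gamma(x)).
\]
Since $\dim G/P = \sum_x \gamma(x)(\beta(x) - \gamma(x))$ and $\mathcal{T}^\circ \to \Omega^e$ is a $G/P$-bundle, we obtain $\dim \mathcal{T}^\circ = \dim \Rep(Q/I, \beta)' + \langle \gamma, \beta - \gamma\rangle_I$. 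The generic fiber dimension theorem applied to $\pi$ on a suitable irreducible component then produces $\dim \Gr(\gamma, W)' = \langle \gamma, \beta - \gamma\rangle_I$ for $W$ in a nonempty open $\Omega \subseteq \Rep(Q/I, \beta)'$. The main technical hurdle will be the bookkeeping on irreducible components: since $\Rep(Q/I, \beta)'$ and $\mathcal{T}^\circ$ may each be reducible, one has to pick a component of $\mathcal{T}^\circ$ meeting $\Omega^e$ and dominating a component of $\Rep(Q/I, \beta)'$ that meets the open locus $\{W : \idim W \le 1\}$, which is precisely what the nonemptiness hypothesis on $\Gr(\gamma, W)'$ provides.
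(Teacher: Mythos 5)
Your proposal is correct and takes essentially the same approach as the paper: both construct the incidence variety of pairs $(W, U)$ with $U$ a submodule of $W$, identify its fibers over $\Gr(\gamma,\beta) \cong G/P$ with (an open subset of) $\Rep(Q^e/I^e, \alpha)$, apply Propositions~\ref{proposition:extensions}, \ref{proposition:extensions2}, and \ref{prop:dimension} to obtain smoothness and the dimension count, and conclude via the generic fiber dimension theorem. One small slip: ``$\mathcal{T}^\circ \to \Omega^e$ is a $G/P$-bundle'' should read ``$\mathcal{T}^\circ \to G/P$ is an $\Omega^e$-bundle'' (there is no natural projection to $\Omega^e$), but the resulting dimension count $\dim \mathcal{T}^\circ = \dim G/P + \dim \Omega^e$ is correct either way.
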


\begin{proof}
  Let $\Gr(\gamma, \beta) = \prod_{x \in Q_0} \Gr(\gamma(x),
  \beta(x))$ denote a product of ordinary Grassmannians and let
  $\Rep(Q/I, \gamma \subset \beta)$ be the closure inside of
  $\Rep(Q/I, \beta)' \times \Gr(\gamma, \beta)$ of the set of points
  $(W, V)$ such that $\idim W \le 1$ and such that the subspaces
  determined by $V$ form a submodule of $W$ with $\idim V \le
  1$. Consider the projection $\pi_2 \colon \Rep(Q/I, \gamma \subset
  \beta) \to \Gr(\gamma, \beta)$. The fiber over a point $V$ consists
  of all representation structures on $K^\beta$ such that $V$ forms a
  submodule with $\idim V \le 1$. 

  To better describe this fiber, first choose a splitting $K^\beta =
  K^\gamma \oplus K^{\beta - \gamma}$ of vector spaces. Define a
  dimension vector $\alpha$ of $Q^e/I^e$ by $\alpha(0,x) = \gamma(x)$
  and $\alpha(1,x) = (\beta - \gamma)(x)$.

  Let $Z \subset \Rep(Q^e/I^e, \alpha)$ be the subvariety consisting
  of modules $M$ such that $\idim M \le 1$ when thought of as a module
  over $Q/I$. There is a map $p \colon Z \to \Rep(Q/I, \gamma)$ which
  sends $W$ to the restriction of $W$ to the subquiver $\{0\} \times
  Q_0$. The subvariety $\Rep(Q/I, \gamma)'$ of $\Rep(Q/I, \gamma)$
  consisting of representations with injective dimension at most 1 is
  open (and nonempty by our assumptions), so $Z' = p^{-1}(\Rep(Q/I,
  \gamma)')$ can be identified with $\pi_2^{-1}(V)$. We know that
  $\gldim Q^e/I^e \le 2$ by
  Proposition~\ref{proposition:extensions}. So by
  Proposition~\ref{proposition:extensions2}, the local dimension at
  every point of $Z'$ is given by
  Proposition~\ref{prop:dimension}. Hence we have
  \begin{align*}
    \dim \pi_2^{-1}(V) &= \sum_{x \in Q^e_0} \alpha(x)^2 - \langle
    \alpha, \alpha \rangle_{I^e}
    = \sum_{x \in Q_0} (\beta - \gamma)(x)\beta(x) + \sum_{x \in Q_0}
    \gamma(x)^2 - \langle \beta - \gamma, \beta \rangle_I - \langle
    \gamma, \gamma \rangle_I,
  \end{align*}
  and thus
  \begin{align*}
    \dim \Rep(Q/I, \gamma \subset \beta) &= \dim \Gr(\gamma, \beta) +
    \dim \pi_2^{-1}(V)\\
    &= \sum_{x \in Q_0} \gamma(x) (\beta - \gamma)(x)
    + \dim \pi_2^{-1}(V) \\
    &= \sum_{x \in Q_0} \beta(x)^2 - \langle \beta - \gamma, \beta
    \rangle_I - \langle \gamma, \gamma \rangle_I.
  \end{align*}

  By Proposition~\ref{prop:dimension}, $\Rep(Q/I, \beta)'$ is
  equidimensional of dimension $\sum_{x \in Q_0} \beta(x)^2 - \langle
  \beta, \beta \rangle_I$. For $W \in \Rep(Q/I, \beta)'$, we have
  $\pi_1^{-1}(W) = \Gr(\gamma, W)'$. Hence there is an open set
  $\Omega$ such that for $W \in \Omega$, one has
  \begin{align*}
    \dim \Gr(\gamma, W)' &= \dim \Rep(Q/I, \gamma \subset \beta) -
    \dim \Rep(Q/I, \beta)' \\
    &= -\langle \beta - \gamma, \beta \rangle_I - \langle \gamma,
    \gamma \rangle_I + \langle \beta, \beta \rangle_I = \langle
    \gamma, \beta - \gamma \rangle_I. \qedhere
  \end{align*}
\end{proof}

\begin{remark} 
  We can also consider the dual situation of a representation $W$ with
  $\pdim W \le 1$ and its quiver Grassmannian $\Gr(W, \gamma)'$ of
  $\gamma$-dimensional quotients $U$ with $\pdim U \le 1$. Following
  the above proof will also show that $\dim \Gr(W, \gamma)' = \langle
  \beta - \gamma, \gamma \rangle_I$ for generic $W$, or we can apply
  the duality functor $\hom_K(-,K)$.
\end{remark}

We need to calculate the dimension of quiver Grassmannians when $W$ is
a symmetric representation of a triple flag quiver (see
Section~\ref{section:flagquiver}). 

\begin{proposition} \label{proposition:symmetricgrassmannian} Suppose
  that the characteristic is different from $2$. Let $\QQ$ be an
  $r$-uple flag quiver and $\beta^\delta$ denote the corresponding
  dimension vector. Then for general symmetric $W$ where the
  appropriate maps are injective and surjective, and $\gamma \le
  \beta^\delta$ such that $\gamma(u) \ge \gamma(X_n) +
  \gamma(\tau(X_n))$ for all $X \in \{x,y,z\}$, we have $\dim
  \Gr(\gamma, W)' = \langle \gamma, \beta - \gamma \rangle_I$.
\end{proposition}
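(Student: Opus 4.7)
The plan is to parallel the proof of Theorem~\ref{theorem:grassmannian}, replacing the ambient variety $\Rep(Q/I, \beta)'$ by its symmetric analogue $\SRep(Q/I, \beta^\delta)'$, defined as the closure inside $\SRep(Q/I, \beta^\delta)$ of the locus of symmetric representations $W$ on which the specified maps are injective or surjective (so in particular $\idim W \le 1$).

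The main construction is the incidence variety
\[
\mathcal{Z} \subseteq \SRep(Q/I, \beta^\delta)' \times \Gr(\gamma, \beta^\delta)
\]
consisting of pairs $(W, V)$ with $V \subseteq W$ a submodule of dimension $\gamma$ and $\idim V \le 1$. The two projections are $\pi_1 \colon \mathcal{Z} \to \SRep(Q/I, \beta^\delta)'$, whose generic fiber is $\Gr(\gamma, W)'$, and $\pi_2 \colon \mathcal{Z} \to \Gr(\gamma, \beta^\delta)$, whose fiber over a fixed subspace $V$ parametrizes the symmetric representation structures on $K^{\beta^\delta}$ compatible with $V$ being a submodule with $\idim V \le 1$.

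To compute $\dim \pi_2^{-1}(V)$, I would invoke the extension quiver $Q^e/I^e$ exactly as in the non-symmetric argument: each point in the fiber corresponds to a representation of $Q^e/I^e$ of the dimension vector $\alpha$ determined by $\gamma$ and $\beta^\delta$, equipped with a compatible symmetric structure inherited from the form on $W$. Proposition~\ref{proposition:extensions} gives $\gldim Q^e/I^e \le 2$, and Proposition~\ref{proposition:extensions2} yields the $\ext^2$-vanishing needed to apply a symmetric version of Proposition~\ref{prop:dimension} at a generic point of the fiber, giving its local dimension as the difference between the dimension of the symmetric representation space and a modified symmetric Euler form. The same telescoping arithmetic as in the proof of Theorem~\ref{theorem:grassmannian} — subtracting $\dim \SRep(Q/I, \beta^\delta)'$ from $\dim \mathcal{Z}$ computed via $\pi_2$ — then produces the formula $\langle \gamma, \beta - \gamma \rangle_I$.

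The main obstacle is twofold. First, I need to check that the symmetric analogue of Proposition~\ref{prop:dimension} holds: a symmetric point at which the appropriate $\ext^2$ vanishes is a smooth point of the symmetric representation variety, with local dimension computed by the evident modification of the formula. Second, and more substantively, one must verify that the numerical hypothesis $\gamma(u) \ge \gamma(X_n) + \gamma(\tau(X_n))$ for $X \in \{x,y,z\}$ guarantees that $\mathcal{Z}$ is nonempty and that over a generic $V$ the fiber $\pi_2^{-1}(V)$ has the same dimension as in the non-symmetric case — that is, a generic $V$ inside a generic symmetric $W$ is not forced into a $\tau$-invariant position (which would cut the dimension of the fiber). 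This numerical condition is exactly what says the three arms of the flag quiver have enough ``room'' at the central vertex $u$ to house independent (non-self-dual) subspaces of the prescribed ranks, matching the dimension count of Theorem~\ref{theorem:grassmannian}.
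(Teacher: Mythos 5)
Your plan is to mirror the proof of Theorem~\ref{theorem:grassmannian} in a symmetric setting, with the key step being a ``symmetric version of Proposition~\ref{prop:dimension}'' applied to the extension quiver. You correctly flag this as an obstacle, but it is in fact the crux of the matter, and the paper's proof is organized precisely to avoid it. The issue is that the fiber $\pi_2^{-1}(V)$ is a subvariety of $\SRep(Q/I, \beta^\delta)$, not of $\Rep(Q^e/I^e, \alpha^e)$: a symmetric representation $W$ of $Q/I$ together with a (non-symmetric) submodule $V$ of dimension $\gamma$ does not naturally give a \emph{symmetric} representation of $Q^e/I^e$ (the submodule is not $\tau$-stable and the extension quiver $Q^e/I^e$ does not carry an evident symmetric quiver structure adapted to this). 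So the ``compatible symmetric structure inherited from the form on $W$'' you invoke is not something the paper provides, and neither is the smoothness-and-dimension statement for symmetric representation varieties that you would need — Proposition~\ref{prop:dimension} as cited is a statement about $\Rep(Q/I, \alpha)$, not $\SRep(Q/I, \alpha)$, and its proof (via Zariski tangent spaces and $\ext^2$) does not transfer to the symmetric variety without substantial new work on an orthogonal/symplectic $\ext$-theory.

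The paper's proof instead goes around this entirely. It first establishes the formula $\dim \Gr(\gamma, W)' = \langle \gamma, \beta - \gamma \rangle_I$ by a completely explicit, elementary count of choices for the two-arrow symmetric quiver $K^n \xrightarrow{a} K^{2n+\delta} \xrightarrow{\tau(a)} K^n$ with $\tau(a)a = 0$ (choose a subspace of $K^n$; then choose a complement inside $\ker W_{\tau(a)}$; then choose a transverse subspace; tally dimensions). This is extended by a ``quick check'' to a full single arm, and then to $r$ arms by observing that the implied formula for $\dim \pi_2^{-1}$ only involves arrows and relations — so it is additive over the arms, and the fiber over a point of $\Gr(\gamma, \beta^\delta)$ for $r$ arms is a direct product of the one-arm fibers. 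This reverse-engineers the fiber dimension from a concrete computation, rather than deducing it from a smoothness criterion. If you want to salvage your route you would have to prove, from scratch, a symmetric analogue of Proposition~\ref{prop:dimension} and then set up a suitable symmetric structure on pairs (submodule $\subset$ ambient); as it stands, the proposal's central step is asserted rather than established.
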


\begin{proof} Let $Q/I$ be the following symmetric quiver with
  relations:
  \[
  K^n \xrightarrow{a} K^{2n+\delta} \xrightarrow{\tau(a)} K^n
  \]
  where $\tau(a)a = 0$ and $K^{2n+\delta}$ has a bilinear form. Let
  $\beta$ be this dimension vector and pick $\gamma \le \beta$ such
  that $\gamma_2 \ge \gamma_1 + \gamma_3$ and $\Gr(\gamma, W)'$ is
  nonempty for general symmetric $W$. We use the setup from the proof
  of Theorem~\ref{theorem:grassmannian}:
  \[
  \SRep(Q/I, \beta) \xleftarrow{\pi_1} \SRep(Q/I, \gamma \subset
  \beta) \xrightarrow{\pi_2} \Gr(\gamma, \beta)
  \]
  Then $\pi_2$ is surjective by assumption on $\gamma$, so
  \[
  \dim \Gr(\gamma, W)' = \dim \Gr(\gamma, \beta) + \dim \pi_2^{-1} -
  \dim \SRep(Q/I, \beta),
  \]
  where $\dim \pi_2^{-1}$ is the dimension of a general fiber of
  $\pi_2$.

  We can show directly that $\dim \Gr(\gamma, W)' = \langle \gamma,
  \beta - \gamma \rangle_I$ when $W_a$ is injective and $W_{\tau(a)}$
  is surjective. To get a $\gamma$-dimensional submodule $U$, we first
  choose a $\gamma_1$-dimensional subspace in $K^n$, which gives
  $\gamma_1(n-\gamma_1)$ dimensions of choices. This determines its
  image in $K^{2n+\delta}$. Set $d = \gamma_2 - \gamma_1 -
  \gamma_3$. We need $\dim \ker U_{\tau(a)} = \gamma_2 - \gamma_3$ and
  $U_a(K^{\gamma_1}) \subset \ker U_{\tau(a)}$, so to pick the rest of
  the kernel, we pick a $d$-dimensional subspace of $\ker W_{\tau(a)}
  / U_a(K^{\gamma_1})$. Since $\dim \ker W_{\tau(a)} = n+\delta$, this
  gives us $d(n+\delta-\gamma_1-d)$ dimensions of choice.

  Finally, we choose a $\gamma_3$-dimensional subspace in
  $K^{2n+\delta} / (K^{\gamma_1} + K^d)$ disjoint from the image of
  $W_a(K^n)$. A generic choice of subspace works, so we get
  $\gamma_3(2n + \delta - \gamma_2)$ dimensions of choice. This
  determines the $\gamma_3$-dimensional subspace in the last
  $K^n$. Thus,
  \begin{align*}
    \dim \Gr(\gamma, W)' &= \gamma_1(n-\gamma_1) + d(n + \delta +
    \gamma_3 - \gamma_2) + \gamma_3(2n+\delta - \gamma_2)\\
    &= \gamma_1(n - \gamma_1) + (\gamma_2 -
    \gamma_1)(2n+\delta-\gamma_2) + d(\gamma_3-n)\\
    &= \gamma_1(n-\gamma_1) + \gamma_2(2n+\delta-\gamma_2) +
    \gamma_3(n-\gamma_3) \\
    &\quad -\gamma_1(2n+\delta-\gamma_2) - \gamma_2(n-\gamma_3) +
    \gamma_1(n-\gamma_3)\\
    &= \langle \gamma, \beta - \gamma \rangle_I.
  \end{align*}

  From this, it is a quick check to show that $\dim \Gr(\gamma, W)' =
  \langle \gamma, \beta - \gamma \rangle_I$ for the symmetric quiver
  with relations
  \[
  K^1 \to K^2 \to \cdots \to K^n \xrightarrow{a} K^{2n+\delta}
  \xrightarrow{\tau(a)} K^n \to \cdots \to K^1.
  \]

  We get the formula
  \[
  \dim \pi_2^{-1} = -\sum_{a \in Q_1} \gamma(ta) (\beta - \gamma)(ha)
  + \sum_{r \in Q_2} \gamma(tr) (\beta - \gamma)(hr) + \dim \SRep(Q/I,
  \beta). 
  \]
  This formula is ``additive'' with respect to the number $r$ of arms
  (the formula only involves arrows and relations, and not vertices),
  and the general fiber $\pi_2^{-1}$ when there are $r$ arms can be
  thought of as the direct product of $r$ copies of the general fiber
  when there is 1 arm. So we can work as in the proof of
  Theorem~\ref{theorem:grassmannian} to conclude that $\dim
  \Gr(\gamma, W)' = \langle \gamma, \beta - \gamma \rangle_I$ for $r$
  arms.
\end{proof}

\subsection{Calculating $\ext^1_{Q/I}$.}

Suppose that the characteristic is arbitrary in this section. Recall
from Section~\ref{section:quivergrass} that $\Gr(\gamma, W)'$ denotes
the variety of submodules $U$ of $W$ with $\dim U = \gamma$ and $\idim
U \le 1$.

\begin{lemma} \label{lemma:induction} Suppose $\gldim Q/I \le 2$, and
  let $\alpha$ be a dimension vector and let $W$ be a representation
  of dimension $\beta$. Assume that the following conditions hold.
  \begin{compactenum}[\rm (a)]
  \item \label{item:injective} There is an irreducible component $C
    \subseteq \Rep(Q/I, \alpha)$ such that a general representation
    $V$ satisfies $\idim V \le 1$ and $\pdim V \le 1$.  Let $\gamma$
    be the generic rank of a homomorphism between a representation of
    $C$ and $W$.
  \item \label{item:grassmanndim} $\idim W \le 1$ and $\dim
    \Gr(\gamma, W)' = \langle \gamma, \beta - \gamma \rangle_I$.
  \end{compactenum}
  Then $\dim \ext^1(C, W) = -\langle \alpha - \gamma, \beta - \gamma
  \rangle_I$.
\end{lemma}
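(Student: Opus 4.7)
\emph{Proof plan.} The plan is to adapt Schofield's hereditary-case incidence-variety argument, using the extension quiver $Q^e/I^e$ to handle $\gldim Q/I \le 2$. For a general $V \in C$, the hypothesis $\pdim V \le 1$ forces $\ext^{\ge 2}_{Q/I}(V, W) = 0$, so the modified Euler form~\eqref{eqn:eulerform:Q/I} gives
$$\dim \hom_{Q/I}(V, W) = \langle \alpha, \beta \rangle_I + \dim \ext^1_{Q/I}(V, W),$$
and by upper semicontinuity the right-hand side equals $\langle \alpha, \beta\rangle_I + \ext^1(C, W)$ generically on $C$. Form the incidence variety
$$\mathcal{Z} = \overline{\{(V, \phi) : V \in C,\ \phi \in \hom_{Q/I}(V, W),\ \rank \phi = \gamma,\ \im \phi \in \Gr(\gamma, W)'\}},$$
irreducible as an open subvariety of the $\hom$-bundle over a dense open of $C$ (the condition $\im \phi \in \Gr(\gamma, W)'$ is open because $\idim W \le 1$). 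Projection to $C$ gives $\dim \mathcal{Z} = \dim C + \langle \alpha, \beta\rangle_I + \ext^1(C, W)$; the other projection $\pi_{\Gr} \colon \mathcal{Z} \to \Gr(\gamma, W)'$, $(V, \phi) \mapsto \im \phi$, is dominant, and hypothesis~(b) supplies $\dim \Gr(\gamma, W)' = \langle \gamma, \beta - \gamma\rangle_I$.

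Next, compute the dimension of a generic fiber $\pi_{\Gr}^{-1}(U)$, which consists of pairs $(V, \phi\colon V \twoheadrightarrow U)$ with $V \in C$. Each such pair encodes a $Q^e/I^e$-representation $M' = (\ker\phi \subset V)$ of dimension vector $\alpha^e$ given by $\alpha^e(0, x) = (\alpha - \gamma)(x)$ and $\alpha^e(1, x) = \gamma(x)$. The hypotheses $\pdim V \le 1$ and $\idim U \le 1$ (the latter by definition of $\Gr(\gamma, W)'$) give $\ext^2_{Q/I}(V, \ker\phi) = 0$ and $\ext^2_{Q/I}(U, U) = 0$, so Proposition~\ref{proposition:extensions2} yields $\ext^2_{Q^e/I^e}(M', M') = 0$; by Propositions~\ref{proposition:extensions} and~\ref{prop:dimension}, the relevant component of $\Rep(Q^e/I^e, \alpha^e)$ is smooth of expected dimension at $M'$. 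A dimension chase along the forgetful map $\Rep(Q^e/I^e, \alpha^e) \to \Rep(Q/I, \gamma)$ (extracting the quotient $V/\ker\phi$), after accounting for the $\GL(\gamma)$-torsor of isomorphisms $V/\ker\phi \xrightarrow{\sim} U$, yields
$$\dim \pi_{\Gr}^{-1}(U) = \dim C + \langle \alpha, \gamma \rangle_I.$$
Equating the two computations of $\dim \mathcal{Z}$ and simplifying via bilinearity of $\langle \cdot, \cdot\rangle_I$ gives $\ext^1(C, W) = -\langle \alpha - \gamma, \beta - \gamma\rangle_I$.

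The principal obstacle will be the fiber-dimension formula: one must verify that the appropriate component of $\Rep(Q^e/I^e, \alpha^e)$ dominates $C$ via the underlying module and dominates a suitable component of $\Rep(Q/I, \gamma)$ via the quotient, and that a generic $V \in C$ surjects onto a generic $U \in \Gr(\gamma, W)'$ --- equivalently, that $\ext^1_{Q/I}(V, U) = 0$ for generic such pairs. This is exactly where the extension-quiver machinery and the combined use of $\pdim V \le 1$, $\idim V \le 1$, and $\idim W \le 1$ all enter in an essential way.
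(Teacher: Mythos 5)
Your plan is essentially the paper's own argument: both form an incidence variety of pairs $(V,\phi)$ with $V$ generic in $C$ and $\phi\colon V\to W$ of rank $\gamma$, project it two ways (to $C$ and, via the image of $\phi$, to $\Gr(\gamma,W)'$), and compute the second fiber dimension by passing to the extension quiver $Q^e/I^e$ and invoking Propositions~\ref{proposition:extensions}, \ref{proposition:extensions2}, and~\ref{prop:dimension}. The only organizational difference is that the paper factors the second projection through the bundle $\hom(K^\alpha,\gamma,W)\to\Gr(\gamma,W)'$, which makes the fiber-dimension count you flag as the ``principal obstacle'' transparent (the fiber over $(\phi,U)$ is literally $q^{-1}(U)$ for $q\colon\Rep(Q^e/I^e,\alpha^e)\to\Rep(Q/I,\gamma)$ and its dimension drops out of Proposition~\ref{prop:dimension} minus $\dim C'$); your claimed value $\dim C + \langle\alpha,\gamma\rangle_I$ for $\dim\pi_{\Gr}^{-1}(U)$ is correct and matches the paper's after unwinding.
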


\begin{proof}
  The proof proceeds like the proof of \cite[Theorem
  5.2]{generalreps}. Define 
  \[
  \hom(K^\alpha, \gamma, W) = \{(\phi, U) \in \hom_K(K^\alpha, W)
  \times \Gr(\gamma, W)' \mid \phi(K^\alpha) = U\}.
  \] 
  For $U \in \Gr(\gamma, W)'$, the fiber of $U$ in $\hom(K^\alpha,
  \gamma, W)$ can be identified with an open subset of
  $\hom_K(K^\alpha, U)$, so has dimension $\sum_{x \in Q_0} \alpha(x)
  \gamma(x)$. So by \eqref{item:grassmanndim}, we have
  \[
  \dim \hom(K^\alpha, \gamma, W) = \langle \gamma, \beta - \gamma
  \rangle_I + \sum_{x \in Q_0} \alpha(x) \gamma(x).
  \]

  Now define 
  \[
  \hom(Q, \alpha, \gamma, W) = \{(M, \phi, U) \in C \times
  \hom(K^\alpha, \gamma, W) \mid \phi \in \hom_{Q/I}(M, U) \},
  \]
  and let $\pi_1$ and $\pi_2$ be the projections of $\hom(Q, \alpha,
  \gamma, W)$ to $C$ and $\hom(K^\alpha, \gamma, W)$, respectively. A
  point in $\pi_2^{-1}(\phi, U)$ is a collection of linear maps $\{M_a
  \colon K^{\alpha(ta)} \to K^{\alpha(ha)}\}_{a \in Q_1}$ that lift
  the maps in $U$.

  We can rephrase these lifts as certain representations of $Q^e/I^e$
  of dimension $\alpha^e$ where $\alpha^e(0,x) = (\alpha - \gamma)(x)$
  and $\alpha^e(1,x) = \gamma(x)$ for $x \in Q_0$. A general
  representation $M \in \Rep(Q^e/I^e, \alpha^e)$ satisfies
  $\ext^2(M,M) = 0$ by \eqref{item:injective} and
  Proposition~\ref{proposition:extensions2}. So by
  Proposition~\ref{prop:dimension},
  \[
  \dim \Rep(Q^e/I^e, \alpha^e) = \sum_{x \in Q_0} \alpha(x)(\alpha -
  \gamma)(x) - \langle \alpha, \alpha - \gamma \rangle_I + \sum_{x \in
    Q_0} \gamma(x)^2 - \langle \gamma, \gamma \rangle_I
  \]
  There is a map $q \colon \Rep(Q^e/I^e, \alpha^e) \to \Rep(Q/I,
  \gamma)$ which sends a representation to its quotient which is
  supported on $\{1\} \times Q_0$. Let $N \in \Rep(Q/I, \gamma)$ be a
  representation which appears as the image of a map $V \to W$ for
  general $V \in C$. Let $C' \subseteq \Rep(Q/I, \gamma)$ be the
  irreducible component containing $N$. Then the general rank of a
  homomorphism between $C'$ and $\Rep(Q/I, \beta)$ is $\gamma$, so we
  may rechoose the above $N$ so that $\dim q^{-1}(N) = \dim
  \Rep(Q^e/I^e, \alpha^e) - \dim C'$. This also shows that the image
  of $\pi_2$ contains a nonempty open set. Also, $\idim N \le 1$ since
  it is a quotient of $V$, and we can identify $q^{-1}(N)$ with
  $\pi_2^{-1}(\phi, U)$, so
  \begin{align} \label{eqn:pi2}
    \begin{split}
      \dim \hom(Q, \alpha, \gamma, W) &= \dim \hom(K^\alpha, \gamma,
      W) + \dim \pi_2^{-1}(\phi, U) \\
      &= \dim \hom(K^\alpha, \gamma, W) + \dim \Rep(Q^e/I^e, \alpha^e)
      - \dim C' \\
      &= \langle \gamma, \beta - \gamma \rangle_I + \sum_{x \in Q_0}
      \alpha(x) \gamma(x)\\
      & \quad + \sum_{a \in Q_1} \alpha(ta) (\alpha - \gamma)(ha) -
      \sum_{r \in Q_2} \alpha(tr) (\alpha - \gamma)(hr).
    \end{split}
  \end{align}

  On the other hand, there is an open subset $\Omega \subseteq C$ that
  is contained in the image of $\pi_1$ by definition of $\gamma$. So
  $\dim \pi_1^{-1}(\Omega) = \dim \hom(Q, \alpha, \gamma,
  W)$. Furthermore, for general $M \in \Omega$ such that $\dim
  \hom_{Q/I}(M,W) = \hom(C,W)$, we have that the fiber $\pi_1^{-1}(M)$
  is a dense open subset of $\hom_{Q/I}(M, W)$. So we get
  \begin{align} \label{eqn:pi1}
    \begin{split}
      \dim \hom(Q, \alpha, \gamma, W) &= \dim \Rep(Q/I, \alpha)
      + \dim \pi_1^{-1}(M)\\
      &= \sum_{a \in Q_1} \alpha(ta) \alpha(ha) - \sum_{r \in Q_2}
      \alpha(tr) \alpha(hr) + \hom(C, W).
    \end{split}
  \end{align}
  Putting together \eqref{eqn:pi2} and \eqref{eqn:pi1} we get
  \[
  \langle \gamma, \beta - \gamma \rangle_I + \langle \alpha, \gamma
  \rangle_I = \hom(C, W)
  \]
  Now $\ext^2(C, W) = 0$ by \eqref{item:grassmanndim}, so we can
  rewrite this equality as
  \[
  \langle \alpha, \beta \rangle_I + \ext^1(C, W) = \langle
  \gamma, \beta - \gamma \rangle_I + \langle \alpha, \gamma \rangle_I,
  \]
  from which we conclude that $\ext^1(C, W) = -\langle \alpha -
  \gamma, \beta - \gamma \rangle_I$. 
\end{proof}

\begin{theorem} \label{theorem:vanishingext} Let $Q/I$ be a quiver
  with relations and let $\alpha$ and $\beta$ be dimension
  vectors. Assume that the conditions of Lemma~\ref{lemma:induction}
  hold and use the same notation. Then for generic $V \in C$, we have
  $\dim \ext^1_{Q/I}(V, W) = \max_{\beta'} -\langle \alpha, \beta'
  \rangle_I$ where the maximum is over all dimension vectors $\beta'$
  of factor modules of $W$.
\end{theorem}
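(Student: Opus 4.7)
The plan is to sandwich $\dim\ext^1(V,W)$ between the maximum appearing on the right-hand side and a specific factor module that attains it. The lower bound holds uniformly over all factor modules and is essentially formal. The upper bound is obtained by iterating the passage to quotients by images of generic homomorphisms from $V$, with each iteration step turning out to be a dimension-preserving isomorphism on $\ext^1$ via Lemma~\ref{lemma:induction} applied to auxiliary pairs.

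\textbf{Lower bound.} For any factor $W\twoheadrightarrow W''$ of dimension $\beta''$, the hypotheses of Lemma~\ref{lemma:induction} force $\pdim V\le 1$ for generic $V\in C$, so $\ext^2(V,-)=0$. The long exact sequence associated to $0\to\ker\to W\to W''\to 0$ yields a surjection $\ext^1(V,W)\twoheadrightarrow\ext^1(V,W'')$. Combined with the Euler identity $\dim\hom(V,W'')-\dim\ext^1(V,W'')=\langle\alpha,\beta''\rangle_I$ and the trivial bound $\dim\hom(V,W'')\ge 0$, this gives $\dim\ext^1(V,W)\ge-\langle\alpha,\beta''\rangle_I$ for every such factor.

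\textbf{Iterative construction of the optimal factor.} Set $W_0=W$. While $\hom(V,W_k)\ne 0$, choose $U_k=\phi_k(V)\subseteq W_k$ for a generic $\phi_k\colon V\to W_k$, and set $W_{k+1}=W_k/U_k$. The process terminates at some $W_n$ with $\hom(V,W_n)=0$, exhibiting $W_n$ as a factor of $W$. Each $U_k$ is a quotient of $V$: by the remark following Proposition~\ref{prop:dimension}, $\idim U_k\le 1$, and $\Gr(\dim U_k,U_k)'$ reduces to the single point $U_k$ itself, of dimension $0=\langle\dim U_k,0\rangle_I$. Since a generic homomorphism $V\to U_k$ is surjective by the very construction of $U_k$, the generic rank of homomorphisms from $C$ to $U_k$ equals $\dim U_k$, so Lemma~\ref{lemma:induction} applies to the pair $(V,U_k)$ and yields
\[
\dim\ext^1(V,U_k)=-\langle\alpha-\dim U_k,\,0\rangle_I=0.
\]

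\textbf{Conclusion and main obstacle.} The vanishing $\ext^1(V,U_k)=0$ combined with the long exact sequence for $0\to U_k\to W_k\to W_{k+1}\to 0$ upgrades the surjection $\ext^1(V,W_k)\twoheadrightarrow\ext^1(V,W_{k+1})$ to an isomorphism at each step. Telescoping gives $\dim\ext^1(V,W)=\dim\ext^1(V,W_n)$, and Euler at the terminal stage with $\hom(V,W_n)=0$ yields $\dim\ext^1(V,W_n)=-\langle\alpha,\dim W_n\rangle_I$, so the factor $W\twoheadrightarrow W_n$ attains the maximum. The delicate point is coordinating the generic choices so that a single $V\in C$ works simultaneously at every iteration step and the generic rank computation for each $(V,U_k)$ goes through; this follows from upper semicontinuity of the relevant dimension functions recorded in Section~\ref{section:semicontinuity} together with the fact that a finite intersection of dense open subsets of $C$ is again dense.
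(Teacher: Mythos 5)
Your overall skeleton --- iterating the passage to quotients by images of generic maps, obtaining a chain of factor modules $W=W_0\twoheadrightarrow W_1\twoheadrightarrow\cdots\twoheadrightarrow W_n$ terminating at $\hom(V,W_n)=0$, and comparing against the lower bound --- is the right idea, and your lower bound paragraph is essentially the paper's. But the upper bound has a genuine gap that the closing sentence acknowledges but does not repair.

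The problem is the claim $\ext^1(V,U_k)=0$. Lemma~\ref{lemma:induction}, applied as you do to the pair $(C,U_k)$, yields $\ext^1(C,U_k)=0$: this is the \emph{generic} value over $C$, achieved on an open dense subset $\Omega_{U_k}\subseteq C$ depending on $U_k$. To conclude $\ext^1(V,U_k)=0$ for \emph{your} $V$, you need $V\in\Omega_{U_k}$. But $U_k$ was built from $V$ (it is $\phi_k(V)$), so $\Omega_{U_k}$ is not one of finitely many dense opens fixed in advance that can be intersected --- it moves with $V$. Upper semicontinuity only gives the useless inequality $\ext^1(V,U_k)\ge\ext^1(C,U_k)=0$. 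In fact a direct computation shows $\ext^1(V,U_k)$ is the cokernel of $\ext^1(V,\ker\phi_k)\to\ext^1(V,V)$, and there is no reason for this to vanish in general: it does vanish generically, but over pairs $(V',U')\in C\times C_{U}$, not over the constrained diagonal family $(V,\phi_k(V))$. Without $\ext^1(V,U_k)=0$ you only get the surjection $\ext^1(V,W_k)\surj\ext^1(V,W_{k+1})$ (always true since $\ext^2(V,U_k)=0$), which telescopes to the useless inequality $\dim\ext^1(V,W)\ge\dim\ext^1(V,W_n)$.

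The paper avoids exactly this circularity. First, it gets the isomorphism $\ext^1(V,W)\cong\ext^1(V,W')$ (with $W'=\coker\phi$) not via $\ext^1(V,\im\phi)=0$ but by a squeeze: the lemma applied to the \emph{original, fixed} $W$ gives $\dim\ext^1(V,W)=-\langle\alpha-\gamma,\beta-\gamma\rangle_I$, and the Euler count with $\ext^2(V',W')=0$ forces the chain of surjections $\ext^1(V,W)\surj\ext^1(V,W')\surj\ext^1(V',W')$ to be isomorphisms. Second, and crucially, the induction on $\dim W$ is carried out at the level of the irreducible component $C_{W'}$, not the specific module $W'$: the lemma's hypotheses are verified for $(C,C_{W'})$ using Theorem~\ref{theorem:grassmannian} (a generic statement), the inductive hypothesis gives the answer for a generic $X\in C_{W'}$, and only at the very end is the conclusion transferred from generic $X$ back to the particular $W'$ (and then $W$) via the closedness of ``admits a factor module of dimension $\beta''$'' \cite[Lemma 3.1]{generalreps}. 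Your proof has no analogue of this component-level reasoning and transfer step, which is precisely what is needed to break the dependence of the generic conditions on the chosen $V$.
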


\begin{proof} 
  Pick $V \in C$ so that a generic map $V \to W$ has rank $\gamma$. We
  always have the inequality $\dim \ext^1(V, W) \ge \max_{\beta'}
  -\langle \alpha, \beta' \rangle_I$, so the content of the result is
  that there exists a factor module $W'$ of $W$ such that $\dim
  \ext^1(V, W) = -\langle \alpha, \dim W' \rangle_I$. 


  If $\gamma = 0$, then $\hom(V, W) = 0$ and hence $\ext^1(V, W) =
  -\langle \alpha, \beta \rangle_I$ since $\ext^2(V, W) = 0$ by
  Lemma~\ref{lemma:induction}\eqref{item:grassmanndim}. Otherwise if
  $\gamma \ne 0$, pick a map of rank $\gamma$, and let $V'$ be the
  kernel and $W'$ be the cokernel. Let $C_{V'}$ and $C_{W'}$ be the
  irreducible components containing $V'$ and $W'$, respectively. Now
  $\idim W' \le 1$ since $\idim W \le 1$, so $\ext^2(V', W') = 0$,
  which means that $\dim \ext^1(V', W') \ge -\langle \alpha - \gamma,
  \beta - \gamma \rangle_I$. We also have surjections
  \[
  \ext^1(V, W) \surj \ext^1(V, W') \surj \ext^1(V', W'),
  \]
  the first because $\pdim V \le 1$, and the second because $\idim W'
  \le 1$. By Lemma~\ref{lemma:induction}, $\dim \ext^1(V, W) =
  -\langle \alpha - \gamma, \beta - \gamma \rangle_I$, which forces
  $\dim \ext^1(V', W') = -\langle \alpha - \gamma, \beta - \gamma
  \rangle_I$, which is its minimal possible value, so $\ext^1(C_{V'},
  C_{W'}) = -\langle \alpha - \gamma, \beta - \gamma \rangle_I =
  \ext^1(C, W)$. This also implies that the above surjections are
  isomorphisms so that $\ext^1(C, C_{W'}) \le \ext^1(C, W)$. But we
  know that $\ext^1(C_{V'}, C_{W'}) \le \ext^1(C, C_{W'})$ since
  $\idim W' \le 1$. In particular, $\ext^1(C, C_{W'}) = \ext^1(C, W)$.

  Now a general representation $C_{W'}$ satisfies the assumptions of
  Lemma~\ref{lemma:induction}: the fact that $\idim C_{W'} \le 1$ we
  have already mentioned; we can apply
  Theorem~\ref{theorem:grassmannian} because if $\gamma'$ is the
  generic rank of a map $\phi \colon V'' \to W'$, where $V'' \in C$ is
  generic, then $\Gr(\gamma', W')'$ is nonempty because $\phi(V'')$ is
  a quotient of $V''$ and hence $\idim \phi(V'') \le 1$. Therefore, by
  induction on $\dim W$, we conclude that for general $X \in C_{W'}$,
  $\dim \ext^1(V, X) = -\langle \alpha, \beta'' \rangle_I$ where
  $\beta'' = \dim W''$ for some factor module $W''$ of $X$. Since a
  general representation in this component has a factor module of
  dimension $\beta''$, the same is true for $W'$ \cite[Lemma
  3.1]{generalreps}, and hence is also true for $W$.
\end{proof}

\section{Orthogonal and symplectic Littlewood--Richardson
  coefficients.} \label{section:bcdLRcoeff}

\subsection{Representation theory of the classical
  groups.} \label{sec:classicalrepn} 

Suppose that the characteristic is 0. Let $E$ be a
$(2n+\delta)$-dimensional vector space over $K$ (where $\delta \in
\{0,1\}$) and let $\omega$ be a nondegenerate symplectic or symmetric
bilinear form on $E$. Let $G$ be the subgroup of $\SL(E)$ which
preserves $\omega$. In order to be precise let us just list the cases:
\begin{compactenum}
\item Case $\mathrm{B}_n$: We have $\delta = 1$, $\omega$ is symmetric,
  $G = \SO(E) \cong \SO(2n+1)$.
\item Case $\mathrm{C}_n$: We have $\delta = 0$, $\omega$ is
  skew-symmetric, $G = \Sp(E) \cong \Sp(2n)$.
\item Case $\mathrm{D}_n$: We have $\delta = 0$, $\omega$ is symmetric,
  $G = \SO(E) \cong \SO(2n)$.
\end{compactenum}

We identify the weight lattice of $G$ with $\Z^n = \Z\langle \eps_1,
\dots, \eps_n \rangle$ equipped with the standard dot product. Since
we have assumed that $K$ is algebraically closed, we can find a basis
$e_1, \dots, e_{2n+\delta}$ for $E$ such that $1 = \omega(e_i,
e_{2n+\delta+1-i}) = \pm \omega(e_{2n+\delta+1-i}, e_i)$ (the sign
depending on whether $\omega$ is symmetric or skew-symmetric) for
$i=1,\dots,n+\delta$, and all other pairings are 0. Representing
elements of $G$ as matrices with respect to this ordered basis, we can
take our maximal torus $T$ to be the subgroup of diagonal matrices,
and our Borel subgroup $B$ to be the subgroup of upper triangular
matrices. We identify $(\lambda_1, \dots, \lambda_n) \in \Z^n$ with
the character $\lambda \colon T \to K^*$ given by
\[
\operatorname{diag}(d_1, \dots, d_{2n+\delta}) \mapsto d_1^{\lambda_1}
d_2^{\lambda_2} \cdots d_n^{\lambda_n}.
\]
To be completely explicit, we list the simple roots and the conditions
for a weight to be dominant under this identification in
Table~\ref{weighttable}. 

\begin{table}[ht]
\caption{Roots and weights}
\label{weighttable}
\centering
\begin{tabular}{|c|c|c|c|}
  \hline
  & $\SO(2n+1)$ & $\Sp(2n)$ & $\SO(2n)$ \\
  \hline 
  \tiny{simple roots} & 
  \begin{tabular}{l}
    $\eps_1 - \eps_2, \eps_2 - \eps_3,$ \\ 
    $\dots, \eps_{n-1} - \eps_n, \eps_n$ \end{tabular} & 
  \begin{tabular}{l}
    $\eps_1 - \eps_2, \eps_2 - \eps_3,$ \\ 
    $\dots, \eps_{n-1} - \eps_n, 2\eps_n$ \end{tabular} & 
  \begin{tabular}{l}
    $\eps_1 - \eps_2, \eps_2 - \eps_3, \dots$ \\ 
    $\eps_{n-1} - \eps_n, \eps_{n-1} + \eps_n$ \end{tabular}
  \\
  \hline
  \tiny{dominant weights} & $ \lambda_1 \ge \cdots \ge \lambda_n \ge 0 $ & 
  $\lambda_1 \ge \cdots \ge \lambda_n \ge 0$ & $\lambda_1 \ge
  \cdots \ge \lambda_{n-1} \ge |\lambda_n|$ \\
  \hline
\end{tabular}
\end{table}

We review the relevant details of Weyl's construction for these
representations in characteristic 0. Given $1 \le i < j \le d$, we
have a contraction map
\begin{align*}
  \Psi_{i<j} \colon E^{\otimes d} &\to E^{\otimes d-2}\\
  v_1 \otimes \cdots \otimes v_{d} &\mapsto \omega(v_i, v_j) v_1
  \otimes \cdots \otimes \hat{v}_i \otimes \cdots \otimes \hat{v}_j
  \otimes \cdots \otimes v_{d},
\end{align*}
and we define
\[
E^{\langle d \rangle} = \bigcap_{1 \le i < j \le d} \ker \Psi_{i<j}.
\]
The irreducible polynomial representation of $\GL(E)$ of highest
weight $\lambda$ can be constructed using a Young symmetrizer acting
on $E^{\otimes d}$ where $d = |\lambda|$. The details of this
construction won't be needed, but details can be found in \cite[\S
9.7]{procesi} and \cite[\S 2.2]{weyman}. Let $\bS_\lambda(E)$ denote
such a realization. This notation is compatible with its use in
Section~\ref{invariantsection}. Also, recall that $\ell(\lambda)$ is
the number of nonzero parts of a partition $\lambda$. Finally, we
define
\[
\bS_{[\lambda]}(E) = E^{\langle d \rangle} \cap \bS_\lambda(E).
\]

\begin{proposition} \label{proposition:weylconstruction} The
  intersection $\bS_{[\lambda]}(E)$ is nonzero if and only if
  $\ell(\lambda) \le n$. For $\ell(\lambda) \le n$, we have the
  following cases.
\begin{compactenum}[\rm (a)]
\item Case ${\rm B}_n$: $\bS_{[\lambda]}(E)$ is an irreducible
  representation of $\SO(E)$ with highest weight $\lambda$.
\item Case ${\rm C}_n$: $\bS_{[\lambda]}(E)$ is an irreducible
  representation of $\Sp(E)$ with highest weight $\lambda$.
\item Case ${\rm D}_n$: $\bS_{[\lambda]}(E)$ is an irreducible
  representation of ${\bf O}(E)$. If $\lambda_n = 0$, then
  $\bS_{[\lambda]}(E)$ is an irreducible representation of $\SO(E)$
  with highest weight $\lambda$. Otherwise, if $\lambda_n > 0$, then
  $\bS_{[\lambda]}(E)$ is the direct sum of two irreducible
  representations of $\SO(E)$, one of highest weight $(\lambda_1,
  \dots, \lambda_{n-1}, \lambda_n)$, and one of highest weight
  $(\lambda_1, \dots, \lambda_{n-1}, -\lambda_n)$.
\end{compactenum}
\end{proposition}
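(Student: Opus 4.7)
The plan is to carry out Weyl's classical construction in three steps: exhibit a highest weight vector, invoke Schur--Weyl duality via the Brauer algebra to get irreducibility, and finally use Clifford theory to handle the $\mathrm{D}_n$ dichotomy. All three steps are essentially standard; since the result is cited as background and worked out in \cite[\S 11.6]{procesi} and \cite[\S 6.3]{weyman}, I will only sketch the strategy.

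First, for existence and identification of the highest weight. Realize $\bS_\lambda(E)$ inside $E^{\otimes d}$ by a Young symmetrizer $c_\lambda$ attached to a tableau $T$ of shape $\lambda$. With $T$ filled so that row $i$ contains only the entry $i$, the vector $v_\lambda = c_\lambda \cdot (e_1^{\otimes \lambda_1} \otimes \cdots \otimes e_n^{\otimes \lambda_n})$ is nonzero precisely when $\ell(\lambda) \le n$, and it is a $\GL(E)$-highest weight vector of weight $\lambda$. By our choice of basis, $\omega(e_i, e_j) = 0$ for all $i,j \le n$, so every contraction $\Psi_{i<j}$ annihilates $v_\lambda$; hence $v_\lambda \in \bS_{[\lambda]}(E)$. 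Because the Borel $B$ was chosen as the upper triangular matrices, $v_\lambda$ is also a $B$-highest weight vector for $G$ of weight $\lambda$, giving both nonvanishing and the assertion about highest weight. Conversely, if $\ell(\lambda) > n$ then $\lambda$ is not a dominant weight for $G$ (by Table~\ref{weighttable}); but any nonzero $G$-submodule of $\bS_\lambda(E)$ would have to contain a highest weight vector with $\GL$-weight dominated by $\lambda$, forcing $\lambda$ itself by Weyl character considerations, which is impossible. So $\bS_{[\lambda]}(E) = 0$ in this range.

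Second, irreducibility. The module $E^{\langle d \rangle}$ is exactly the subspace of $E^{\otimes d}$ annihilated by every contraction diagram in the Brauer algebra $B_d(\pm(2n+\delta))$ (sign depending on whether $\omega$ is symmetric or skew). Under the Brauer algebra action, the centralizer of $G$ on $E^{\otimes d}$ is spanned by Brauer diagrams, and the quotient of $B_d$ acting faithfully on $E^{\langle d \rangle}$ is the group algebra $K[\Sigma_d]$. Schur--Weyl duality for classical groups then yields a multiplicity-free decomposition
\[
E^{\langle d \rangle} \;=\; \bigoplus_{|\mu| = d} \bS_{[\mu]}(E) \otimes [\mu],
\]
with $[\mu]$ the irreducible $\Sigma_d$-module and $\bS_{[\mu]}(E)$ an irreducible $G$-module (in types $\mathrm{B}$ and $\mathrm{C}$; in type $\mathrm{D}$ this gives irreducibility as a $\mathbf{O}(E)$-module). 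Intersecting with the image of $c_\lambda$ picks out a single isotypic component, establishing irreducibility for $\mathrm{B}_n$ and $\mathrm{C}_n$ outright, and for $\mathrm{D}_n$ as an $\mathbf{O}(E)$-module.

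Third, the $\mathrm{D}_n$ case. Here $G = \SO(E)$ has index $2$ in $\mathbf{O}(E)$, and one can choose a reflection $\sigma \in \mathbf{O}(E) \setminus \SO(E)$ swapping $e_n$ and $e_{n+1}$; its action on the maximal torus negates the coordinate $\eps_n$ and fixes the others. If $\lambda_n = 0$ then $\lambda$ is $\sigma$-invariant and Clifford theory gives that $\bS_{[\lambda]}(E)$ remains irreducible after restricting to $\SO(E)$. If $\lambda_n > 0$ then $\sigma$ interchanges the $\lambda$- and $(\lambda_1, \ldots, \lambda_{n-1}, -\lambda_n)$-weight spaces, so again by Clifford theory $\bS_{[\lambda]}(E)$ decomposes as a sum of the two $\SO(E)$-irreducibles with these highest weights. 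The main obstacle is the irreducibility step, which genuinely requires Brauer/Schur--Weyl input; everything else reduces to bookkeeping about the bilinear form and the Young symmetrizer.
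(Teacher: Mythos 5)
The paper's ``proof'' of this proposition is just a citation to Procesi, so your sketch is an unpacking of the classical Weyl construction rather than a divergence from the paper; the route you describe (Young symmetrizer highest weight vector, Brauer-algebra centralizer for irreducibility as an ${\bf O}(E)$ or $\Sp(E)$ module, Clifford theory between ${\bf O}(E)$ and $\SO(E)$) is indeed the standard one. For the ``if'' direction, the identification of the highest weight, and the irreducibility/splitting statements, your sketch is acceptable modulo the usual bookkeeping.

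However, the ``only if'' direction has a genuine flaw. You argue that any nonzero $G$-submodule of $\bS_\lambda(E)$ ``would have to contain a highest weight vector with $\GL$-weight dominated by $\lambda$, forcing $\lambda$ itself by Weyl character considerations.'' This does not follow: a $G$-highest-weight vector inside $\bS_\lambda(E)$ can have $\GL$-weight strictly below $\lambda$, and it is the restriction of that smaller weight to the maximal torus of $G$ that labels the $G$-constituent, so nothing forces the weight to be $\lambda$. Moreover the conclusion you are trying to prove is itself incorrect in the orthogonal cases. For ${\bf O}(m)$ the traceless module $\bS_{[\lambda]}(E)$ is nonzero precisely when $\lambda'_1 + \lambda'_2 \le m$ (sum of the first two column lengths), which is strictly weaker than $\ell(\lambda) \le n$. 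A concrete counterexample: take $\SO(3)$ (so $n=1$) and $\lambda=(1,1)$. The contraction $\Psi_{1<2}$ vanishes identically on $\bigwedge^2 E$ because $\omega$ is symmetric, so $\bS_{[(1,1)]}(E)=\bigwedge^2 E\neq 0$ even though $\ell(\lambda)=2>1=n$. (In the symplectic case the criterion $\ell(\lambda)\le n$ is correct and can be proved by a direct traceless-tensor filtration argument, not the character argument you gesture at.) Since the paper only ever invokes this proposition for partitions with $\ell(\lambda)\le n$ the slip is harmless for the applications, but your proof of the ``only if'' clause does not hold up, and in types B and D the criterion you are defending is false as stated; if you want a correct and provable statement you should replace $\ell(\lambda)\le n$ by $\lambda'_1+\lambda'_2\le \dim E$ in those cases, or simply restrict the proposition to partitions with $\ell(\lambda)\le n$.
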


\begin{proof} See \cite[\S\S 11.6.3, 11.6.6]{procesi}.
\end{proof}

\begin{remark} \label{dualweights} In types ${\rm B}_n$ and ${\rm
    C}_n$, all representations are self-dual. The same is true for
  type ${\rm D}_n$ when $n$ is even. When $n$ is odd, the dual of a
  representation of type ${\rm D}_n$ with highest weight $(\beta_1,
  \dots, \beta_{n-1}, \beta_n)$ has highest weight $(\beta_1, \dots,
  \beta_{n-1}, -\beta_n)$. This follows from \cite[Proposition
  10.5.3]{procesi}. In particular, we see from
  Proposition~\ref{proposition:weylconstruction} that
  $\bS_{[\lambda]}(E)^* \cong \bS_{[\lambda]}(E)$ in all cases.
\end{remark}

\subsection{Flag quivers.} \label{section:flagquiver} Suppose that the
characteristic is different from 2. Pick positive integers $n$ and
$r$. We construct symmetric quivers $\QQ_{r,n}^+$ and $\QQ_{r,n}^-$ as
follows. The vertices consist of $x^j_i$ and $\tau(x^j_i)$ where
$j=1,\dots,r$ and $i=1,\dots,n$ along with an additional vertex $u =
\tau(u)$. For notation we use $x^j_{n+1}$ to denote $u$ for any
$j=1,\dots,r$. The action of the involution $\tau$ is suggested by the
notation and $s(u)$ is given by the superscript of
$\QQ^{\pm}_{r,n}$. For each $j=1,\dots,r$ and each $i=1,\dots,n$, we
have an arrow $x^j_i \xrightarrow{a_i^j} x^j_{i+1}$ and
$\tau(x^j_{i+1}) \xrightarrow{\tau(a_i^j)} \tau(x^j_i)$. Again, the
action of $\tau$ is suggested by the notation. Furthermore, we impose
the relations $\tau(a_n^j) a_n^j = 0$ for $j=1,\dots,r$. We have drawn
a diagram in the case $r=3$:
\[
\xymatrix{ x^1_1 \ar[r]^-{a^1_1} & x^1_2 \ar[r]^-{a^1_2} & \cdots
  \ar[r]^-{a^1_{n-1}} & x^1_n \ar[dr]^-{a^1_n} & & \tau(x^1_n)
  \ar[r]^-{\tau(a^1_{n-1})} & \cdots \ar[r]^-{\tau(a^1_2)} &
  \tau(x^1_2) \ar[r]^-{\tau(a^1_1)} & \tau(x^1_1) \\
  x^2_1 \ar[r]^-{a^2_1} & x^2_2 \ar[r]^-{a^2_2} & \cdots
  \ar[r]^-{a^2_{n-1}} & x^2_n \ar[r]^-{a^2_n} & u = \tau(u)
  \ar[r]^-{\tau(a^2_n)} \ar[ur]^-{\tau(a^1_n)} \ar[dr]^-{\tau(a^3_n)}
  & \tau(x^2_n) \ar[r]^-{\tau(a^2_{n-1})} & \cdots
  \ar[r]^-{\tau(a^2_2)} & \tau(x^2_2) \ar[r]^-{\tau(a^2_1)} & \tau(x^2_1) \\
  x^3_1 \ar[r]^-{a^3_1} & x^3_2 \ar[r]^-{a^3_2} & \cdots
  \ar[r]^-{a^3_{n-1}} & x^3_n \ar[ur]^-{a^3_n} & & \tau(x^3_n)
  \ar[r]^-{\tau(a^3_{n-1})} & \cdots \ar[r]^-{\tau(a^3_2)} &
  \tau(x^3_2) \ar[r]^-{\tau(a^3_1)} & \tau(x^3_1) }.
\]
We call $\QQ^+_{r,n}$ the {\bf orthogonal flag quiver} and
$\QQ^-_{r,n}$ the {\bf symplectic flag quiver}. We define a dimension
vector $\beta^\delta$ by $\beta^\delta(x^j_i) = i$ for $j=1,\dots,r$
and $i=1,\dots,n$, and define $\beta^\delta(u) = 2n+\delta$. Since
$\delta = 0$ in the symplectic case, we will usually write $\beta =
\beta^0$ in this case. We will use $\QQ_{r,n}$ to denote the
underlying quiver with relations.

\begin{proposition} The quiver $\QQ_{r,n}$ has global dimension $2$.
\end{proposition}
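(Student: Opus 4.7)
The plan is to bound the global dimension by computing a minimal projective resolution of each simple module $S_v$, since $\gldim KQ/I = \sup_v \pdim S_v$. Since $I$ is generated by the quadratic relations $r_j = \tau(a^j_n)\,a^j_n$ for $j = 1,\dots,r$, which are pairwise supported on disjoint pairs of arrows, the computation decomposes into a few local cases.

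Away from the center vertex $u$, nothing interesting happens. For $v = x^j_i$ with $i < n$, appending $a^j_i$ determines an injection $P_{x^j_{i+1}} \hookrightarrow P_{x^j_i}$ whose image is $\rad P_{x^j_i}$, yielding the length-one resolution $0 \to P_{x^j_{i+1}} \to P_{x^j_i} \to S_{x^j_i} \to 0$. The analogous argument along the $\tau$-arm gives $\pdim S_{\tau(x^j_i)} \le 1$, with $\pdim S_{\tau(x^j_1)} = 0$. For $v = u$, paths from $u$ that begin with distinct arrows $\tau(a^{j'}_n)$ do not interact with each other, nor do they encounter any relation (the only relation $\tau(a^j_n)\,a^j_n$ contains $a^j_n$, which does not start at $u$). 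Hence $\rad P_u = \bigoplus_{j'} P_{\tau(x^{j'}_n)}$ as submodules, giving $\pdim S_u \le 1$.

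The essential case is $v = x^j_n$. Here the natural map $P_u \to P_{x^j_n}$ determined by $e_u \mapsto a^j_n$ is surjective onto $\rad P_{x^j_n}$, but it has a nonzero kernel: the relation $\tau(a^j_n)\,a^j_n = 0$ forces every path of $P_u$ starting with $\tau(a^j_n)$ to be killed. I will verify that this kernel is precisely the submodule of $P_u$ generated by $\tau(a^j_n)$, and that this submodule is isomorphic to $P_{\tau(x^j_n)}$ via $e_{\tau(x^j_n)} \mapsto \tau(a^j_n)$. The latter requires checking that no path $q$ starting at $\tau(x^j_n)$ is annihilated by right-concatenation with $\tau(a^j_n)$, which is clear since such a concatenation stays inside the $\tau$-arm and never produces the forbidden subword $\tau(a^j_n)\,a^j_n$. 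This yields a minimal projective resolution
\[
0 \to P_{\tau(x^j_n)} \to P_u \to P_{x^j_n} \to S_{x^j_n} \to 0,
\]
so $\pdim S_{x^j_n} = 2$, and in particular $\gldim \QQ_{r,n} = 2$ exactly.

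The only step that requires any care is the kernel computation for $v = x^j_n$; the remaining cases are routine. As a sanity check, the existence of this length-two resolution for $S_{x^j_n}$ also confirms that the canonical resolution \eqref{eqn:relationscanonical} is available for all modules, which is what the subsequent sections of the paper rely on.
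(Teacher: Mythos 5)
Your proof is correct and takes essentially the same approach as the paper: compute a (minimal) projective resolution of each simple module and observe that the only length-two resolution occurs at $S_{x^j_n}$. The paper simply writes down the same four families of resolutions and ``leaves the verification to the reader,'' whereas you spell out the kernel computation for $P_u \to P_{x^j_n}$; the content is identical.
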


\begin{proof} It is enough to show that every simple module $S_x$ has
  a projective resolution of length at most 2. We write down the
  resolutions and leave the verification to the reader. For $u$, we
  have
  \[
  0 \to \bigoplus_{j=1}^r P_{\tau(x_n^j)} \to P_u \to S_u \to 0.
  \]
  For each $j=1,\dots,r$ and $i=1,\dots,n$ we have (with the
  convention that $P_{\tau(x^j_0)} = 0$):
  \begin{align*}
    0 \to P_{\tau(x^j_{i-1})} \to P_{\tau(x^j_i)} \to S_{\tau(x^j_i)}
    \to 0& \\
    0 \to P_{x^j_{i+1}} \to P_{x^j_i} \to S_{x^j_i} \to 0& \quad (1
    \le i \le n-1)\\
    0 \to P_{\tau(x^j_n)} \to P_u \to P_{x^j_n} \to S_{x^j_n} \to
    0&. \qedhere
  \end{align*}
\end{proof}

From now on, we replace all instances of orthogonal groups by special
orthogonal groups in the definition of semi-invariants. In effect, we
are ignoring the action of ${\bf O}(V) / \SO(V)$ on
semi-invariants. Hence we do not have to worry about irreducible
components of $\SRep(Q/I, \beta)$ not being closed under the action of
${\bf G}(Q, \beta)$.

A symmetric representation $V \in \SRep(\QQ^+_{r,n}, \beta^\delta)$ is
given by the data of $(2n+\delta)$-dimensional vector space $V(u)$
equipped with a nondegenerate symmetric form, in addition to arbitrary
vector spaces $V(x^j_i)$ of dimension $i$ and arbitrary linear maps
$V_{a^j_i}$ for $j=1,\dots,r$ and $i=1,\dots,n$. The relation
$\tau(a^j_n)a^j_n = 0$ is equivalent to saying that the image of
$V(x^j_n)$ under $V_{a^j_n}$ is an isotropic subspace. This can be
seen by picking a hyperbolic basis for $V(u)$. There is a similar
interpretation for a symmetric representation in $\SRep(\QQ^-_{r,n},
\beta)$.

Let $Z' \subset \hom(V(x^j_n), V(u))$ be the subvariety of maps whose
image is an isotropic subspace. Let $Y$ be the Grassmannian of
$n$-dimensional isotropic subspaces of $V(u)$. Then $Y$ is equipped
with a trivial vector bundle $V(u) \times Y$ and a tautological
subbundle $\RR \subset V(u) \times Y$ given by $\{(x,W) \mid x \in
W\}$. We can also form the vector bundle $Z = \hom(V(x^j_n), \RR) =
V(x^j_n)^* \otimes \RR$. Then $Z$ consists of pairs $(\phi, W)$ where
$\phi \colon V(x^j_n) \to W$, so there is a natural projection $\pi
\colon Z \to \hom(V(x^j_n), V(u))$ whose image is $Z'$.

\begin{proposition} \label{proposition:desingularization} The map $\pi
  \colon Z \to Z'$ is a projective birational morphism.
\end{proposition}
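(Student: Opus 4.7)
The plan is to establish projectivity by embedding $Z$ into a trivial bundle over the projective variety $Y$, and to establish birationality by identifying an explicit open dense subset of $Z$ on which $\pi$ restricts to an isomorphism. Informally, the proof reflects the fact that a generic map with isotropic image is injective, and in that case the only $n$-dimensional isotropic subspace containing the image is the image itself.

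Projectivity is essentially formal: $Z = \hom(V(x^j_n), \RR)$ is naturally a closed subvariety of the trivial bundle $\hom(V(x^j_n), V(u)) \times Y$, the closed condition being that each map factors through the tautological subbundle $\RR$. The morphism $\pi$ factors as this closed immersion followed by projection to the first factor; the latter is projective because $Y$, being an (isotropic) Grassmannian, is projective.

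For birationality, I would take $U \subseteq Z$ to be the open subset of pairs $(\phi, W)$ with $\phi$ injective. For any such pair, the subspace $\phi(V(x^j_n))$ is an $n$-dimensional isotropic subspace of $V(u)$ contained in the $n$-dimensional subspace $W$, forcing $W = \phi(V(x^j_n))$. Hence $W$ is recovered from $\phi$ via the assignment $\phi \mapsto (\phi, \phi(V(x^j_n)))$, which is a morphism on the open locus of injective linear maps (using the standard Grassmannian morphism sending an injective map to its image). This exhibits $\pi|_U$ as an isomorphism onto the open subset $\pi(U) \subseteq Z'$ consisting of maps of rank $n$.

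To finish, I would verify that $U$ is dense in $Z$ and that $\pi(U)$ is dense in $Z'$. The first is immediate, since $Z$ is a vector bundle of rank $n^2$ over $Y$ and the injective maps form a dense open subset of each fiber. For the second, given any $\phi \in Z'$, I would pick $W \in Y$ containing $\phi(V(x^j_n))$, which exists by definition of $Z'$, and note that $\phi$ is the limit at $t=0$ of $\phi + t\psi$, where $\psi \colon V(x^j_n) \to W$ is an isomorphism; these perturbations lie in $Z'$ and are injective for generic $t$. The only mild subtlety will be that in type $\mathrm{D}_n$ the variety $Y$ may have two connected components, but since birationality is a component-wise property the argument carries over unchanged to each component.
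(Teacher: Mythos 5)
Your proof is correct and takes essentially the same approach as the paper's own, more terse argument: projectivity comes from embedding $Z$ in $\hom(V(x^j_n), V(u)) \times Y$ with $Y$ projective, and birationality from the observation that on the dense open locus of injective maps the subspace $W$ is forced to be $\phi(V(x^j_n))$, giving a well-defined inverse. Your explicit perturbation argument for density of $\pi(U)$ in $Z'$ and the remark about type $\mathrm{D}_n$ components simply flesh out details the paper leaves implicit.
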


\begin{proof} Since $Y$ is a projective variety, the projection $V(u)
  \times Y \to V(u)$ is projective, so the same is true for the
  restriction $\pi$. The set of injective maps in $Z'$ is open and
  dense, and there is a uniquely defined inverse on this open set.
\end{proof}

\begin{corollary} The open subset of $Z'$ consisting of injective maps
  is nonsingular.
\end{corollary}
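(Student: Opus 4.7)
The plan is to exploit that $\pi\colon Z\to Z'$ is projective birational (from Proposition~\ref{proposition:desingularization}) and that $Z$ is smooth in order to identify the injective locus of $Z'$ with an open subset of $Z$. First I would observe that the isotropic Grassmannian $Y$ is a smooth projective variety (this is standard: one realizes $Y$ as a homogeneous space under $\mathbf{O}(V(u))$ or $\Sp(V(u))$, respectively). Consequently the tautological subbundle $\RR\subset V(u)\times Y$ is a vector bundle on the smooth base $Y$, and therefore $Z=V(x^j_n)^*\otimes\RR$ is a vector bundle on $Y$. In particular $Z$ is nonsingular.

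Next, let $Z'_0\subseteq Z'$ be the open subset of injective linear maps $\phi\colon V(x^j_n)\to V(u)$ and let $Z_0=\pi^{-1}(Z'_0)\subseteq Z$. Since $\dim V(x^j_n)=n=\dim W$ for every $W\in Y$, a pair $(\phi,W)\in Z$ lies over an injective map precisely when $\phi$ itself is injective, in which case $W=\phi(V(x^j_n))$ is forced. In particular the fibers of $\pi|_{Z_0}\colon Z_0\to Z'_0$ are singletons.

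The key point is to upgrade this set-theoretic bijection to a scheme-theoretic isomorphism by exhibiting a regular inverse. Given a family of injective maps parametrized by some base, taking images produces a morphism to the isotropic Grassmannian $Y$; applied to the universal family on $Z'_0$ this yields a morphism $\iota\colon Z'_0\to Y$ with $\iota(\phi)=\phi(V(x^j_n))$. The map $\phi\mapsto(\phi,\iota(\phi))$ defines a morphism $Z'_0\to Z_0$ which is a two-sided inverse to $\pi|_{Z_0}$. Therefore $\pi|_{Z_0}\colon Z_0\to Z'_0$ is an isomorphism of varieties, and since $Z_0$ is an open subset of the smooth variety $Z$, it is nonsingular; hence so is $Z'_0$.

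The only mildly delicate point is verifying that the image construction really does give a regular morphism $Z'_0\to Y$, rather than merely a set-theoretic map; this is a standard fact for a family of injective maps into a fixed vector space (the image defines a subbundle), and in our setting the image is automatically isotropic because $\phi\in Z'$, so it lands in the closed subvariety $Y$ of the ordinary Grassmannian. Modulo this routine verification, the argument is immediate.
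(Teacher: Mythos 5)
Your argument is correct and is essentially the same as the one the paper has in mind: the corollary is immediate from Proposition~\ref{proposition:desingularization} because $Z$ is a vector bundle over the smooth homogeneous space $Y$ (hence smooth) and $\pi$ restricts to an isomorphism over the injective locus. You usefully make explicit the one point the paper glosses over, namely that the set-theoretic inverse $\phi\mapsto(\phi,\phi(V(x^j_n)))$ is in fact a morphism, via the universal property of the Grassmannian applied to the image subbundle.
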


In the symplectic case and the odd orthogonal case, $Y$ is an
irreducible variety since it has a transitive action of $\Sp(V(u))$
and $\SO(V(u))$, respectively. So $Z'$ is also irreducible in these
cases. In the even orthogonal case, $\SO(V(u))$ does not act
transitively on $Y$. However ${\bf O}(V(u))$ does act transitively, so
$Y$ has two connected components. To describe them, fix a maximal
isotropic subspace $W \subset V(u)$. Then one component consists of
subspaces whose intersection with $W$ has even dimension, and the
other component consists of subspaces whose intersection with $W$ has
odd dimension. In particular, the subvariety $Z' \subset
\hom(V(x^j_n), V(u))$ of isotropic maps has two irreducible
components. Each component has a dense open subset consisting of
injective maps $\phi$ and the components are distinguished by whether
$\dim(\phi(V(X_n)) \cap W)$ is even or odd. Their intersection
consists of the non-injective maps.

\begin{proposition} \label{proposition:components} The varieties
  $\SRep(\QQ^+_{r,n}, \beta^1)$ and $\SRep(\QQ^-_{r,n}, \beta)$ are
  irreducible. The variety $\SRep(\QQ^+_{r,n}, \beta^0)$ has $2^r$
  irreducible components. Each of the components is faithful (see
  Section~\ref{section:semiinvariantsQ/I}).
\end{proposition}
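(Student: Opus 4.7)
The plan is to reduce the global structure to the single-arm variety $Z'$ analyzed just before the statement, and then to argue faithfulness separately by exploiting the acyclic structure of $\QQ_{r,n}$.

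First I would decompose. A symmetric representation in $\SRep(\QQ_{r,n}, \beta^\delta)$ is specified by the maps $V_{a_i^j}$ for $1 \le i < n$ (completely unconstrained) together with the maps $V_{a_n^j}$ for each arm $j$ (constrained by $\tau(a_n^j) a_n^j = 0$, which says exactly that the image is isotropic in $V(u)$); the $\tau$-arrows are then determined by the form on $V(u)$. Hence there is a natural product decomposition
\[
\SRep(\QQ_{r,n}, \beta^\delta) \;\cong\; A \;\times\; \prod_{j=1}^r Z'_j,
\]
where $A$ is an affine space of hom-spaces and $Z'_j \subset \hom(V(x_n^j), V(u))$ is the isotropic-map variety. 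Applying Proposition~\ref{proposition:desingularization}, the birational resolution $\pi\colon Z_j \to Z'_j$ together with the vector bundle structure $Z_j \to Y$ shows that $Z'_j$ has the same number of irreducible components as the isotropic Grassmannian $Y$. In types $\mathrm{B}_n$ and $\mathrm{C}_n$, $Y$ is homogeneous under a connected group and so is irreducible, which gives the irreducibility of $\SRep(\QQ^+_{r,n}, \beta^1)$ and $\SRep(\QQ^-_{r,n}, \beta)$. In type $\mathrm{D}_n$, $Y$ has exactly two components, so each $Z'_j$ has two, and the product decomposition yields $2^r$ irreducible components of $\SRep(\QQ^+_{r,n}, \beta^0)$, indexed by an independent choice of family of maximal isotropics for each arm.

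The step I expect to carry the real content is the verification that each component $C$ is faithful. The structural observation is that, since $\QQ_{r,n}$ is acyclic and $I$ is generated by the elements $\tau(a_n^j) a_n^j$, any two vertices $y, z$ are joined by at most one basis path in $KQ/I$: paths internal to a single arm are unique, paths crossing from arm $j$ to arm $j' \neq j$ through $u$ are unique, and paths from an arm back into itself through $u$ contain $\tau(a_n^j) a_n^j$ and so are killed. If $x \in KQ$ annihilates every representation in $C$, writing $x \equiv \sum_p c_p p \pmod I$ in basis paths and restricting to each idempotent piece $e_z x e_y$, this uniqueness forces $c_p V(p) \equiv 0$ on $C$ for each surviving basis path $p$. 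It therefore suffices to show that for every such $p$, the polynomial function $V \mapsto V(p)$ does not vanish identically on $C$.

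For $p$ lying entirely on a single arm, $V(p)$ is a composition of generic linear maps between spaces of non-decreasing dimension and so is generically of maximal rank. For $p$ crossing $u$ from arm $j$ to arm $j' \neq j$, the central factor $V_{\tau(a_n^{j'})} V_{a_n^j}$ becomes, under the identification $V(\tau(x_n^{j'})) \cong V(x_n^{j'})^*$ given by the form $\omega$, the map $w \mapsto \omega(V_{a_n^{j'}}(\cdot), V_{a_n^j}(w))$. This vanishes exactly when the isotropic image $L_j = V_{a_n^j}(V(x_n^j))$ satisfies $L_j \subseteq L_{j'}^\perp = L_{j'}$ (using that $L_{j'}$ is maximal isotropic), i.e.\ when $L_j = L_{j'}$. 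On any single component the subspaces $L_j$ and $L_{j'}$ vary independently (within their prescribed families in type $\mathrm{D}_n$), so generically $L_j \neq L_{j'}$; combined with the generic non-degeneracy of the arm factors, we obtain $V(p) \neq 0$ on a dense open subset of $C$. This forces every $c_p = 0$, i.e.\ $x \in I$, completing the faithfulness check.
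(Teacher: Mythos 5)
Your proposal is correct and follows the same route as the paper: the product decomposition into arm varieties $Z'_j$ and the desingularization of Proposition~\ref{proposition:desingularization} are exactly how the paper derives the component count, and for faithfulness the paper simply says it ``follows from the explicit description,'' which you unpack in detail via uniqueness of basis paths in $KQ/I$. One small slip: in case ${\rm B}_n$ ($\delta=1$) the isotropic image $L_{j'}$ has dimension $n$ inside the $(2n+1)$-dimensional $V(u)$, so $L_{j'}^\perp$ has dimension $n+1$ and strictly contains $L_{j'}$; the vanishing condition is still $L_j\subseteq L_{j'}^\perp$, which fails generically, but the equivalence ``i.e.\ when $L_j=L_{j'}$'' holds only in cases ${\rm C}_n$ and ${\rm D}_n$, so the type B case should be phrased directly in terms of $L_j\not\subseteq L_{j'}^\perp$.
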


\begin{proof}
  The varieties $\hom(V(x^j_i), V(x^j_{i+1}))$ are irreducible for
  $j=1,\dots,r$ and $i=1,\dots,n-1$, so the statements about the
  irreducible components follow from the preceding discussion. The
  faithfulness of the components in $\SRep(\QQ_{r,n}^+, \beta^0)$
  follows from their explicit description in the preceding
  discussion. 
\end{proof}

\begin{corollary} The restriction of $\pi$ for each irreducible
  component $X$ of $\SRep(\QQ^{\pm}_{r,n}, \beta^\delta)$ is a rational
  desingularization. In particular, they are normal varieties.
\end{corollary}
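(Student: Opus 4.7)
The plan is to build an explicit candidate desingularization $\tilde{X}\to X$ for each irreducible component using the Grassmannian-bundle structure, verify smoothness and birationality, and then establish the rational-resolution property, from which normality follows automatically.

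First I would combine Propositions~\ref{proposition:desingularization} and~\ref{proposition:components} to describe each irreducible component $X$ as a product
\[
X \;=\; K^M \times \prod_{j=1}^r Z'_j,
\]
where $K^M$ parametrizes the interior maps $V(x_i^j)\to V(x_{i+1}^j)$ for $1\le i\le n-1$ and each $Z'_j$ is a chosen irreducible component of the isotropic-image variety $Z'\subset \hom(V(x_n^j),V(u))$. (In the $\SO(2n)$ case this records a choice of connected component of the isotropic Grassmannian for each of the $r$ arms, giving $2^r$ components; in the $\SO(2n+1)$ and $\Sp(2n)$ cases there is a unique choice.) Correspondingly I would set $\tilde{X}=K^M\times\prod_{j=1}^r Z_j$, where $Z_j=\hom(V(x_n^j),\RR_j)$ is the total space of the vector bundle obtained by restricting the tautological subbundle to the connected component $Y_j\subset Y$ matching $Z'_j$.

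Since each $Z_j$ is a vector bundle on the smooth projective homogeneous variety $Y_j$ and $K^M$ is affine, $\tilde{X}$ is smooth. The induced map $\tilde{X}\to X$ is the product of the identity on $K^M$ with the collapsings $Z_j\to Z'_j$ from Proposition~\ref{proposition:desingularization}; it is therefore projective (each $Y_j$ is projective) and birational (on the open locus where every terminal map $V(x_n^j)\to V(u)$ is injective, the isotropic subspace $\RR_j$ is determined as the image, so $\pi$ admits an inverse there). Hence $\tilde{X}\to X$ is a resolution of singularities.

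The main step left is rationality, namely $\pi_*\OO_{\tilde{X}}=\OO_X$ together with $R^i\pi_*\OO_{\tilde{X}}=0$ for $i>0$. Since $\pi$ is the product of the identity on $K^M$ with the factor maps $Z_j\to Z'_j$, a flat-base-change/K\"unneth argument reduces both claims to the analogous statements for each individual collapsing. Each of these is a classical Kempf--Weyman collapsing, so I would compute $R^\bullet\pi_*\OO_{Z_j}$ by expanding the symmetric algebra via Cauchy:
\[
\Sym^\bullet\bigl(V(x_n^j)^* \otimes \RR_j^*\bigr) \;=\; \bigoplus_{\lambda}\bS_\lambda(V(x_n^j)^*) \otimes \bS_\lambda(\RR_j^*),
\]
so that $R^i\pi_*\OO_{Z_j}=\bigoplus_\lambda \bS_\lambda(V(x_n^j)^*)\otimes H^i(Y_j,\bS_\lambda(\RR_j^*))$. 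The main obstacle is verifying uniformly across types B, C, and D that these higher cohomology groups vanish and that the degree-zero terms reassemble into the coordinate ring of $Z'_j$; this is exactly the content of Bott-type vanishing on isotropic Grassmannians, as carried out in Weyman's book. Granting this, $\tilde{X}\to X$ is a rational desingularization, and normality of $X$ follows from the standard fact that any variety admitting a rational resolution is normal.
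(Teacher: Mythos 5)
Your proof is correct and carries out precisely the Kempf--Weyman geometric-technique argument that the paper itself defers to by citing Chapter 5 of Weyman's book without giving details: decompose $X$ as a product, desingularize via the total space of $\hom(V(x_n^j),\RR_j)$ over the (component of the) isotropic Grassmannian, and use Bott vanishing to get $R^i\pi_*\OO = 0$ for $i>0$ and $\pi_*\OO_{\tilde X}=\OO_X$, hence normality. One small slip worth fixing: since $Z_j = V(x_n^j)^*\otimes\RR_j$, the pushforward of the structure sheaf is $\Sym(V(x_n^j)\otimes\RR_j^*) = \bigoplus_\lambda \bS_\lambda(V(x_n^j))\otimes\bS_\lambda(\RR_j^*)$, not $\Sym(V(x_n^j)^*\otimes\RR_j^*)$; this does not affect the vanishing argument.
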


This follows from the results of \cite[Chapter 5]{weyman}. We won't
need it, so we omit the details.

\begin{proposition} \label{proposition:evencase} Let $V$ be a
  representation of $\QQ_n^{\pm}$. Then $\pdim V \le 1$ if and only if
  the maps $V(x^j_n) \to V(u)$ are injective for
  $j=1,\dots,r$. Dually, $\idim V \le 1$ if and only if the maps $W(u)
  \to W(\tau(x^j_n))$ are surjective for $j=1,\dots,r$.
\end{proposition}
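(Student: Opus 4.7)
The plan is to reduce each half of the statement to the vanishing of a single $\ext^2$ group, which can then be read off from the projective resolutions of simples already exhibited in the preceding proposition.

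Since $\gldim \QQ_{r,n} \le 2$, we have $\pdim V \le 1$ iff $\ext^2(V, S) = 0$ for every simple $S$, and $\idim V \le 1$ iff $\ext^2(S, V) = 0$ for every simple $S$. Inspecting the resolutions of the simples written down in the preceding proposition, the only simples with $\pdim = 2$ are the $S_{x^j_n}$ for $j = 1, \dots, r$; moreover, a direct computation of $\ext^2(S_{x^j_n}, S_x)$ from those same length-two resolutions (using that all arrows act as zero on a simple) yields $\delta_{x, \tau(x^j_n)}$, which shows that the only simples with $\idim = 2$ are the $S_{\tau(x^j_n)}$. Hence the two assertions reduce respectively to $\ext^2(S_{x^j_n}, V) = 0$ for all $j$ (for $\idim$) and $\ext^2(V, S_{\tau(x^j_n)}) = 0$ for all $j$ (for $\pdim$).

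For the $\idim$ half, I would apply $\hom_{KQ/I}(-, V)$ directly to the resolution
\[
0 \to P_{\tau(x^j_n)} \to P_u \to P_{x^j_n} \to S_{x^j_n} \to 0
\]
from the preceding proposition. Using $\hom(P_y, V) = V(y)$, this becomes the three-term complex $V(x^j_n) \xrightarrow{V_{a^j_n}} V(u) \xrightarrow{V_{\tau(a^j_n)}} V(\tau(x^j_n))$, so $\ext^2(S_{x^j_n}, V) = \coker V_{\tau(a^j_n)}$, vanishing exactly when $V_{\tau(a^j_n)}\colon V(u) \to V(\tau(x^j_n))$ is surjective.

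For the $\pdim$ half, I would compute $\ext^2(V, S_{\tau(x^j_n)})$ from the canonical projective resolution of $V$. After applying $\hom(-, S_{\tau(x^j_n)})$, only the terms indexed by the vertex $\tau(x^j_n)$, the unique arrow $\tau(a^j_n)\colon u \to \tau(x^j_n)$, and the unique relation $\tau(a^j_n) a^j_n$ survive, producing the complex $V(\tau(x^j_n))^* \to V(u)^* \to V(x^j_n)^*$. Unpacking the formula for the canonical $d_2$ attached to this length-two relation identifies the second map with the transpose $V_{a^j_n}^*$, so $\ext^2(V, S_{\tau(x^j_n)}) = (\ker V_{a^j_n})^*$, which vanishes iff $V_{a^j_n}\colon V(x^j_n) \to V(u)$ is injective. (Alternatively, this half can be deduced from the $\idim$ half by $K$-linear duality using the $\tau$-induced isomorphism $\QQ_{r,n}^{\mathrm{op}} \cong \QQ_{r,n}$.) The main obstacle I anticipate is precisely the sign- and convention-check needed to identify the final differential as $V_{a^j_n}^*$; once that is pinned down, the rest is a mechanical consequence of the resolutions already in hand.
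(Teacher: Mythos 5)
Your proof is correct, and it takes a genuinely different route from the paper's. You reduce $\pdim V \le 1$ (resp.\ $\idim V \le 1$) to the vanishing of $\ext^2(V, S)$ (resp.\ $\ext^2(S,V)$) over simples, identify from the explicit resolutions of the simples that only $S_{x^j_n}$ have $\pdim = 2$ and only $S_{\tau(x^j_n)}$ have $\idim = 2$, and then read off the relevant $\ext^2$ groups: $\ext^2(S_{x^j_n}, V) = \coker V_{\tau(a^j_n)}$ from the given length-two resolution of $S_{x^j_n}$, and $\ext^2(V, S_{\tau(x^j_n)}) = (\ker V_{a^j_n})^*$ from the canonical resolution of $V$. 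This handles both directions of each equivalence at once. The paper instead argues each implication separately and more by hand: for the ``injective $\Rightarrow \pdim \le 1$'' direction it filters $V$ by submodules supported on relation-free subquivers to isolate a piece $W$ and verifies directly, by an explicit basis computation inside $\bigoplus_j P_{x^j_n}^{\oplus \alpha(x^j_n)}$, that the first syzygy of $W$ is projective; for the converse it argues that non-injectivity forces a summand $P_u$ in the first syzygy on which the differential to $P_0$ cannot be injective; and the $\idim$ statement is obtained by duality. Your reduction-to-simples argument is shorter and more uniform, at the cost of needing the (routine but slightly fiddly) identification of the final differential in the canonical resolution with $V_{a^j_n}^*$ --- though note that for the conclusion you actually only need the cokernel, so the exact sign is immaterial, and you can in any case sidestep this entirely via the $\tau$-duality you mention, which is also how the paper handles the second half.
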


\begin{proof} Throughout the proof, let $Q/I$ denote either the quiver
  with relations $\QQ_{r,n}^+$ or $\QQ_{r,n}^-$. 

  First we prove that if the maps $V(x^j_n) \to V(u)$ are injective,
  then $\pdim V \le 1$. Let $V'$ be the submodule generated by the
  $V(x^j_n)$ for $j=1,\dots,r$. Then $\pdim V/V' \le 1$ since $V/V'$
  is supported on a quiver without relations. So $\pdim V \le 1$ if we
  can prove that $\pdim V' \le 1$. Furthermore, the submodule $V''$ of
  $V'$ generated by the $V'(\tau(X_n))$ is supported in a quiver
  without relations, so it is enough to show that $\pdim W \le 1$
  where $W = V'/V''$.

  Set $P_0 = \bigoplus_{j=1}^r P_{x^j_n}^{\oplus \alpha(x^j_n)}$. To
  say that the kernel $P_1$ of the surjection $P_0 \to W \to 0$ is
  projective, it is enough to say that the submodule of $P_0$
  generated by $P_1(u)$ is projective, which in our situation amounts
  to saying that the maps $P_1(\tau(a^j_n))$ are injective. Pick a
  basis $v_1, \dots, v_N$ for $P_1(u)$. Each basis vector $v_i$ can be
  written as $v_i = v^1_i + \cdots + v^r_i$ where $v^j_i \in
  P_{x^j_n}^{\oplus \alpha(x^j_n)}(u)$, and its image in
  $P_1(\tau(x^j_n))$ is $v_i - v^j_i$. Suppose that the vectors $v_1 -
  v_1^j, \dots, v_N - v_N^j$ are linearly dependent. Since $v_1,
  \dots, v_N$ are linearly independent, this means that some nonzero
  linear combination of the $v_i$ is in $P_{x^j_n}^{\oplus
    \alpha(x^j_n)}(u)$. Hence the map $P_{x^j_n}^{\oplus
    \alpha(x^j_n)}(u) \to W(u)$ coming from $P_0 \to W$ has a nonzero
  kernel, which contradicts that $W(x^j_n) \to W(u)$ is
  injective. Therefore the images of $v_1, \dots, v_N$ under each
  $P_1(\tau(a^j_n))$ are linearly independent, so we are done.

  On the other hand, if the map $V(x^j_n) \to V(u)$ is not injective,
  then $P_0$ will contain a direct summand $P_{x^j_n}$ such that
  $P_{x^j_n}(u)$ will be in the kernel of $P_0 \to V \to 0$. This
  requires that $P_1$ contain a summand $P_u$, and the restriction of
  the map $P_1 \to P_0$ to this summand will not be injective.

  The dual statement about injective dimension is proved in a similar
  manner, or can be obtained by applying the duality functor
  $\hom_K(-,K)$.
\end{proof}

\subsection{Back to semi-invariants.}

Suppose that the characteristic is 0. Now we calculate the space of
semi-invariants of $\SRep(\QQ^{\pm}_{r,n}, \beta^\delta)$. First, we
need a better understanding of what the coordinate ring of
$\SRep(\QQ^{\pm}_{r,n}, \beta^\delta)$ is. Since
$\SRep(\QQ^{\pm}_{r,n}, \beta^\delta)$ is an $r$-fold product of
$\SRep(\QQ^{\pm}_{1,n}, \beta^\delta)$, it is enough to calculate the
coordinate ring in the case $r=1$. Let $R$ denote this variety in the
case $r=1$ and set $x_i = x^1_i$ and let $Z \subset \hom(V(x_n),
V(u))$ denote the subvariety of maps whose image is an isotropic
subspace. Then $R$ is a product of an affine space with $Z$, so we
just need to describe the coordinate ring of $Z$.

By the Cauchy identity \eqref{eqn:cauchy}, the polynomial functions on
$\hom(V(x_n), V(u))$ are given by
\[
\Sym(V(x_n) \otimes V(u)^*) = \bigoplus_{\lambda} \bS_\lambda(V(x_n))
\otimes \bS_\lambda(V(u)^*).
\]
Imposing the relation that the image of the map must be an isotropic
subspace means that after applying the Schur functor $\bS_\lambda$,
the image of $\bS_\lambda(V(x_n)) \to \bS_\lambda(V(u))$ must be in
the kernel of each contraction map as described in
Section~\ref{sec:classicalrepn}. So the coordinate ring of $Z$ is a
quotient of
\[
\bigoplus_\lambda \bS_\lambda(V(x_n)) \otimes \bS_{[\lambda]}(V(u)^*).
\]
In fact, we have equality. To see this, note that $\bS_\lambda(V(x_n))
\otimes \bS_\lambda(V(u)^*)$ is nonzero on some map $\phi$. We can
conjugate $\phi$ by $\GL(V(u))$ to $g\phi$ so that the image of
$g\phi$ is an isotropic subspace. Hence $\bS_\lambda(V(x_n)) \otimes
\bS_\lambda(V(u)^*)$ will also be nonzero on $g\phi$, and
$\bS_\lambda(V(x_n)) \otimes \bS_{[\lambda]}(V(u)^*)$ has to be
nonzero on $g\phi$. Furthermore, letting $G$ be either $\GL(V(x_n))
\times {\bf O}(V(u))$ or $\GL(V(x_n)) \times \Sp(V(u))$ depending on
which case we are in, $\bS_\lambda(V(x_n)) \otimes
\bS_{[\lambda]}(V(u)^*)$ is an irreducible $G$-module and $Z$ is a
$G$-invariant subvariety of $\hom(V(x_n), V(u))$, so no function of
$\bS_\lambda(V(x_n)) \otimes \bS_{[\lambda]}(V(u)^*)$ can be
identically zero on $Z$. 

By Proposition~\ref{proposition:components}, the variety $R$ is
irreducible in types B and C, but has 2 components $R^+$ and $R^-$ in
type D. We want to describe the coordinate rings for $R^+$, $R^-$ and
$R^0 = R^+ \cap R^-$. So now we assume that $V(u)$ has a nondegenerate
symmetric bilinear form and has dimension $2n$. Recall from
Proposition~\ref{proposition:weylconstruction} that the representation
$\bS_{[\lambda]}(V(u))$ is irreducible as an ${\bf O}(V(u))$-module,
but that when $\lambda_n > 0$, it splits into the direct sum of two
representations when considered as an $\SO(V(u))$-module. Call these
two summands $\bS_{[\lambda]^\pm}(V(u))$. For notational purposes, the
symbols $[\lambda]^\pm$ mean $[\lambda]$ in the case that $\lambda_n =
0$. Fix an element $g \in {\bf O}(V(u)) \setminus
\SO(V(u))$. Conjugation by $g$ is an outer automorphism of $\SO(V(u))$
that transforms $\bS_{[\lambda]^+}(V(u))$ into
$\bS_{[\lambda]^-}(V(u))$ and vice versa. Furthermore, while $R^+$ and
$R^-$ are preserved by $\SO(V(u))$, the element $g$ swaps the two of
them and preserves $R^0$. The points in $R^0$ consist of non-injective
maps $V(x_n) \to V(u)$. So when $\lambda_n > 0$, the functions in
$\bS_{\lambda}(V(x_n)) \otimes \bS_{[\lambda]}(V(u)^*)$ vanish on
$R^0$. 

We claim that the ``positive'' and the ``negative'' representations of
$\SO(V(u))$ form the coordinate rings of the two separate components
of $R$. If not, suppose that both $\bS_\lambda(V(x_n)) \otimes
\bS_{[\lambda]^+}(V(u)^*)$ and $\bS_\mu(V(x_n)) \otimes
\bS_{[\mu]^-}(V(u)^*)$ are nonzero on the component $R^+$ where
$\lambda_n > 0$ and $\mu_n > 0$ and choose highest weight vectors
$f_\lambda$ and $f_\mu$ in both. Since $R^+$ is irreducible,
$f_\lambda f_\mu \ne 0$ and has weight $(\lambda + \mu, [\lambda]^+ +
[\mu]^-)$ when $n$ is even, and has weight $(\lambda + \mu,
[\lambda]^- + [\mu]^+)$ when $n$ is odd
(Remark~\ref{dualweights}). But there are no representations in $K[Z]$
with this highest weight, which is a contradiction. Therefore, setting
\[
A = \bigoplus_{\lambda^1, \dots, \lambda^{n-1}} \bigotimes_{1 \le i
  \le n-1} \bS_{\lambda^i}(V(x_i)) \otimes
\bS_{\lambda^i}(V(x_{i+1})^*),
\]
we have
\begin{align*}
  K[R^+] &= A \otimes \bigoplus_\lambda \bS_{\lambda}(V(x_n)) \otimes
  \bS_{[\lambda]^+}(V(u)^*),\\
  K[R^-] &= A \otimes \bigoplus_\lambda \bS_{\lambda}(V(x_n)) \otimes
  \bS_{[\lambda]^-}(V(u)^*),\\
  K[R^0] &= A \otimes \bigoplus_{\lambda,\ \lambda_n = 0}
  \bS_{\lambda}(V(x_n)) \otimes \bS_{[\lambda]}(V(u)^*).
\end{align*}
There is no inherent way of distinguishing the two components or the
representations $\bS_{[\lambda]^{\pm}}(V(u)^*)$, so the formulas above
should be taken as sign conventions.

\begin{proposition} \label{prop:coordinatering}
  The coordinate ring of $\SRep(\QQ^{\pm}_{r,n}, \beta^\delta)$ is
  \[
  \bigoplus_{\ul{\lambda}} \bigg( \bigotimes_{\substack{ 1 \le j \le
      r\\ 1 \le i \le n-1}} \bS_{\ul{\lambda}(x^j_i)}(V(x^j_i))
  \otimes \bS_{\ul{\lambda}(x^j_i)}(V(x^j_{i+1})^*) \otimes
  \bigotimes_{1 \le j \le r} \bS_{\ul{\lambda}(x^j_n)}(V(x^j_n))
  \otimes \bS_{[\ul{\lambda}(x^j_n)]}(V(u)^*) \bigg),
  \]
  where $\ul{\lambda}$ ranges over all partition-valued functions of
  the set $\{x^j_i\}_{1 \le i \le n}^{1 \le j \le r}$. The coordinate
  ring of an irreducible component in the type ${\rm D}$ case is given
  by a choice of pluses and minuses $\ul{\eps} \colon \{x^j\} \to
  \{+,-\}$ to add to the $[\ul{\lambda}(x^j_n)]$. We use the notation
  $\SRep(\QQ^+_{r,n}, \beta)^{\ul{\eps}}$ to denote the corresponding
  component.
\end{proposition}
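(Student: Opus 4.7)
The plan is to reduce the general case to the $r=1$ case already analyzed and then take a tensor product of coordinate rings. First, I would observe that $\SRep(\QQ^{\pm}_{r,n}, \beta^\delta)$ decomposes as a product
\[
\Bigl( \prod_{j=1}^r \prod_{i=1}^{n-1} \hom(V(x^j_i), V(x^j_{i+1})) \Bigr) \times \prod_{j=1}^r Z^j,
\]
where $Z^j \subset \hom(V(x^j_n), V(u))$ is the subvariety of maps whose image is isotropic in $V(u)$. This holds because the relation $\tau(a^j_n) a^j_n = 0$ only couples $a^j_n$ with its $\tau$-image and does not involve different arms, while the maps along each arm are otherwise unconstrained, and the maps $\tau(a^j_i)$ are determined by the $a^j_i$ under the symmetric convention. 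Since the coordinate ring of a product of affine varieties is the tensor product of the individual coordinate rings, it suffices to compute $K[\hom(V(x^j_i), V(x^j_{i+1}))]$ for each intermediate Hom space and $K[Z^j]$ for each arm.

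Next, the Cauchy identity \eqref{eqn:cauchy} gives, for each intermediate arrow,
\[
K[\hom(V(x^j_i), V(x^j_{i+1}))] = \bigoplus_{\lambda} \bS_\lambda(V(x^j_i)) \otimes \bS_\lambda(V(x^j_{i+1})^*).
\]
For each $Z^j$, the computation already carried out in the preceding discussion (using Cauchy together with the fact that imposing the isotropy condition after $\bS_\lambda$ forces the $V(u)^*$-factor to lie in the contraction-kernel $\bS_{[\lambda]}(V(u)^*)$, with nonvanishing guaranteed by conjugating by $\GL(V(u))$ and using irreducibility of $\bS_\lambda(V(x^j_n)) \otimes \bS_{[\lambda]}(V(u)^*)$) yields
\[
K[Z^j] = \bigoplus_\lambda \bS_\lambda(V(x^j_n)) \otimes \bS_{[\lambda]}(V(u)^*)
\]
in types B and C, where $Z^j$ is irreducible by Proposition~\ref{proposition:components}. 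Taking the tensor product over all $j$ and $i$ produces the stated formula, with the summation over partition-valued functions $\ul{\lambda}$ on $\{x^j_i\}$ corresponding to independently chosen partitions on each factor.

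Finally, for type D the variety $Z^j$ has two irreducible components with coordinate rings
\[
\bigoplus_\lambda \bS_\lambda(V(x^j_n)) \otimes \bS_{[\lambda]^\pm}(V(u)^*),
\]
as established just before the statement of the proposition. Taking products across the $r$ arms, I obtain $2^r$ irreducible components of $\SRep(\QQ^+_{r,n}, \beta^0)$, indexed by sign functions $\ul{\eps} \colon \{x^j\} \to \{+,-\}$, and the coordinate ring of the component $\SRep(\QQ^+_{r,n}, \beta)^{\ul{\eps}}$ is obtained by replacing $\bS_{[\ul{\lambda}(x^j_n)]}(V(u)^*)$ with $\bS_{[\ul{\lambda}(x^j_n)]^{\ul{\eps}(x^j)}}(V(u)^*)$ for each $j$. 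The main work here is the product decomposition; everything else is assembly of ingredients already derived. The only mild subtlety is that the splitting $[\lambda] = [\lambda]^+ \oplus [\lambda]^-$ is only well-defined up to a global outer automorphism from ${\bf O}(V(u)) \setminus \SO(V(u))$, so the $\ul{\eps}$ labelling is determined only up to a choice of sign convention, as is already indicated by the author in the preceding paragraph.
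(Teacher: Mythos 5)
Your proposal matches the paper's argument: the paper also factors $\SRep(\QQ^{\pm}_{r,n},\beta^\delta)$ as an $r$-fold product (of copies of the $r=1$ case, which itself is an affine space times $Z$), applies the Cauchy identity \eqref{eqn:cauchy} to each intermediate $\hom$-space, and then plugs in the computation of $K[Z]$ (and its $\pm$-components in type D) established in the discussion immediately preceding the proposition. Your explicit itemization of the product decomposition is a slightly more careful phrasing of the same reduction, not a different route.
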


Given this description, we can now calculate the space of
semi-invariants in terms of representations of the corresponding
classical group. This is essentially the same as \cite[\S
3]{saturation} but we have tried to provide more details.

\begin{lemma} \label{lemma:partitionform} If the direct summand
  corresponding to the function $\ul{\lambda}$ of
  Proposition~\ref{prop:coordinatering} contains a nonzero
  $\SG(\QQ^{\pm}_{r,n}, \beta^\delta)$-invariant, then there exist
  numbers $\sigma(x^j_i)$ for $i=1,\dots,n$ and $j=1,\dots,r$ such
  that
  \begin{align} \label{weightform} \ul{\lambda}(x^j_i)' =
    (1^{\sigma(x^j_1)}, 2^{\sigma(x^j_2)}, \dots, i^{\sigma(x^j_i)}).
\end{align}
Furthermore, $\sigma$ is the symmetric weight of the corresponding
semi-invariant.
\end{lemma}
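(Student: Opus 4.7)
The plan is to do induction on $i$ along each arm $j$. Because $\SG(\QQ^{\pm}_{r,n}, \beta^\delta)$ is a direct product over the vertices of $\QQ^{\pm}_{r,n}$, an element of the $\ul{\lambda}$-summand of Proposition~\ref{prop:coordinatering} is $\SG$-invariant iff it is invariant separately under each factor. For a fixed arm $j$ and index $1 \le i \le n$, the factor $\SL(V(x^j_i))$ acts only on the tensor factors $\bS_{\ul{\lambda}(x^j_{i-1})}(V(x^j_i)^*)$ and $\bS_{\ul{\lambda}(x^j_i)}(V(x^j_i))$ (with the convention $\ul{\lambda}(x^j_0) = \varnothing$ so that the first factor is absent when $i = 1$).

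The key observation is that for any vector space $V$ of dimension $d$ and any partitions $\lambda, \mu$ of length at most $d$, the space $\bS_\lambda(V^*) \otimes \bS_\mu(V) \cong \hom_K(\bS_\lambda(V), \bS_\mu(V))$ contains an $\SL(V)$-invariant if and only if $\mu - \lambda = (c, c, \dots, c) \in \Z^d$ for some integer $c$, since Schur functors are $\GL(V)$-irreducible and are isomorphic as $\SL(V)$-modules only up to twisting by powers of the determinant. Applied to $V = V(x^j_i)$ and using that $\ul{\lambda}(x^j_{i-1})$ has length at most $i - 1$ (so its $i$th entry, after padding, is $0$), this forces the existence of an integer $\sigma(x^j_i) \ge 0$ with $\ul{\lambda}(x^j_i)_k = \ul{\lambda}(x^j_{i-1})_k + \sigma(x^j_i)$ for $1 \le k \le i$. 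Alternatively, one can derive the same recursion by rewriting $\bS_{\ul{\lambda}(x^j_{i-1})}(V(x^j_i)^*)$ using Proposition~\ref{prop:dualschur} and then invoking Proposition~\ref{prop:SLinvariants}.

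Iterating from the base case $\ul{\lambda}(x^j_1) = (\sigma(x^j_1))$ (forced because $\dim V(x^j_1) = 1$), one obtains $\ul{\lambda}(x^j_i)_k = \sigma(x^j_k) + \sigma(x^j_{k+1}) + \cdots + \sigma(x^j_i)$ for $1 \le k \le i$. The number of parts of $\ul{\lambda}(x^j_i)'$ equal to $k$ is $\ul{\lambda}(x^j_i)_k - \ul{\lambda}(x^j_i)_{k+1} = \sigma(x^j_k)$ (with the convention $\ul{\lambda}(x^j_i)_{i+1} = 0$), and as a multiset this is exactly the description in \eqref{weightform}.

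For the weight claim, the center $tI \in \GL(V(x^j_i))$ acts on $\bS_{\ul{\lambda}(x^j_{i-1})}(V(x^j_i)^*) \otimes \bS_{\ul{\lambda}(x^j_i)}(V(x^j_i))$ by the scalar $t^{|\ul{\lambda}(x^j_i)| - |\ul{\lambda}(x^j_{i-1})|}$, so an $\SL$-invariant in this space transforms under $\GL(V(x^j_i))$ by the character $\det^c$ where $i c = |\ul{\lambda}(x^j_i)| - |\ul{\lambda}(x^j_{i-1})| = i \sigma(x^j_i)$; hence $c = \sigma(x^j_i)$, which agrees with the semi-invariant weight convention of Section~\ref{section:quiverSI}. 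The only real pitfall in the argument is keeping track of conjugation of partitions together with the $V^*$ versus $V$ distinction on adjacent tensor factors; the reformulation in terms of $\SL$-isomorphism of Schur functors renders this essentially mechanical.
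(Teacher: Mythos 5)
Your proof is correct and follows the same route as the paper's: induction on $i$ along each arm, reducing to a one-vertex $\SL$-invariance criterion for $\bS_{\ul{\lambda}(x^j_{i-1})}(V(x^j_i)^*) \otimes \bS_{\ul{\lambda}(x^j_i)}(V(x^j_i))$, then translating the resulting recursion $\ul{\lambda}(x^j_i)_k = \ul{\lambda}(x^j_{i-1})_k + \sigma(x^j_i)$ into the conjugate-partition form \eqref{weightform}. The only difference is cosmetic: you phrase the key step via Schur's lemma for $\SL(V)$-modules (using that $\bS_\lambda(V) \cong \bS_\mu(V)$ as $\SL$-modules iff $\mu - \lambda$ is constant), whereas the paper routes it through Propositions~\ref{prop:dualschur} and \ref{prop:SLinvariants}; you correctly observe these are equivalent. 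Your determination of the weight via the action of scalars $tI$ is likewise a clean reformulation of the determinant-power statement in Proposition~\ref{prop:SLinvariants}.
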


\begin{proof} We prove this by induction on $i$. For $i=1$, we have
  that $\bS_{\ul{\lambda}(x^j_1)}(V(x^j_1))$ is a nonzero
  $\SL(V(x^j_1))$-invariant if and only if $\ul{\lambda}(x^j_1)' =
  (1^{\sigma(x^j_1)})$ for some $\sigma(x^j_1) \ge 0$ since $\dim
  V(x^j_1) = 1$.

  For general $i$, write $\mu' = (1^{\sigma(x^j_{i-1})},
  2^{\sigma(x^j_{i-2})}, \dots, (i-1)^{\sigma(x^j_1)})$. Then
  \[
  \bS_{\ul{\lambda}(x^j_{i-1})}(V(x^j_{i})^*) \otimes
  \bS_{\ul{\lambda}(x^j_i)}(V(x^j_i)) \cong (\bigwedge^i
  V(x^j_i))^{\otimes -\Sigma } \otimes \bS_{\mu}(V(x^j_i)) \otimes
  \bS_{\ul{\lambda}(x^j_i)}(V(x^j_i))
  \]
  as $\GL(V(x^j_i))$-representations, where $\Sigma = \sigma(x^j_1) +
  \cdots + \sigma(x^j_{i-1})$ by Proposition~\ref{prop:dualschur}. We
  see that if there is a nonzero $\SL(V(x^j_i))$-invariant, then $k$
  must appear as a part of $\ul{\lambda}(x^j_i)'$ with multiplicity
  $\sigma(x^j_k)$ for $k=1,\dots,i-1$, and $i$ can appear with
  arbitrary multiplicity $\sigma(x^j_i)$ by
  Proposition~\ref{prop:SLinvariants}. Hence $\ul{\lambda}(x^j_i)$ is
  also in the form \eqref{weightform}, and the action of
  $\GL(V(x^j_i))$ on the $\SL(V(x^j_i))$-invariants is via the
  $\sigma(x^j_i)$th power of the determinant again by
  Proposition~\ref{prop:SLinvariants}. 
\end{proof}

\begin{proof}[Proof of Theorem~\ref{bcdsaturation}] 
  Let $Q/I$ denote either $\QQ^{+}_{r,n}$ or $\QQ^-_{r,n}$ depending
  on which group we are interested in. If $(V_{N\lambda^1} \otimes
  \cdots \otimes V_{N\lambda^r})^G \ne 0$, then by
  Lemma~\ref{lemma:partitionform}, there is a corresponding nonzero
  weight space $\SSI(Q/I, \beta^\delta)^{\ul{\eps}}_{N\sigma}$ for
  some irreducible component of $\SRep(Q/I, \beta^\delta)$ (in type B
  or C, there is only one component, in which case the superscript
  $\ul{\eps}$ means nothing). Since this space is spanned by Pfaffian
  semi-invariants by Proposition~\ref{proposition:pfaffianrelations},
  there must be a nonzero determinantal semi-invariant in $\SSI(Q/I,
  \beta)^{\ul{\eps}}_{2N\sigma}$. There is not a unique solution
  $\alpha$ for $2N\sigma = \sigma_{N\alpha}$ since $\sigma(u)$ is not
  defined, so the variable $\alpha(u)$ is not yet determined, and the
  variables $\alpha(\tau(X_i))$ are $\alpha(u)$ plus some
  constant. When we calculate $\langle \alpha, \beta^\delta
  \rangle_I$, the coefficient of $\alpha(u)$ is $2n+\delta$
  (independent of what $r$ is), so using the fact that $\langle
  \alpha, \beta^\delta \rangle_I = 0$, we can solve for $\alpha(u)$
  uniquely.

  So there is a representation $V \in \Rep(Q/I, N\alpha)$ such that
  $\ol{c}^V$ is nonzero on $\SRep(Q/I,
  \beta^\delta)^{\ul{\eps}}$. This implies that $\ol{c}^V$ is nonzero
  on $\Rep(Q/I, \beta^\delta)$, so we can conclude that $\pdim V \le
  1$ by Theorem~\ref{theorem:SIrelations}. Since $\pdim V \le 1$,
  Remark~\ref{remark:weights} gives 
  \[
  0 \ge -\sigma^\circ(\tau(x^j_n)) = \langle \alpha, \eps_{\tau(x^j_n)}
  \rangle_I = \alpha(\tau(x^j_n)) - \alpha(u) + \alpha(x^j_n),
  \]
  so $\alpha(u) \ge \alpha(x^j_n) + \alpha(\tau(x^j_n))$ for
  $j=1,\dots,r$. Since $\pdim V \le 1$ and $\ol{c}^V \ne 0$, we have
  that $\ext^1(V, -)$ vanishes generically on $\SRep(Q/I,
  \beta^\delta)^{\ul{\eps}}$. Let $W$ be a general element of
  $\SRep(Q/I, \beta^\delta)^{\ul{\eps}}$. This implies that $\langle
  N\alpha, \beta' \rangle_I \ge 0$ for all dimension vectors $\beta'$
  of factor modules of $W$. To see this, let $W' \subseteq W$ be a
  submodule of dimension $\beta^\delta - \beta'$. Since $\pdim V \le
  1$, we have $\ext^2(V, W') = 0$, so $\ext^1(V, W/W') = 0$ since
  $\ext^1(V, W) = 0$. This gives $\langle N\alpha, \beta' \rangle_I =
  \dim \hom(V, W/W') \ge 0$, which is the desired inequality.

  So $\langle \alpha, \beta' \rangle_I \ge 0$ for all dimension
  vectors $\beta'$ of factor modules of $W$. By
  Proposition~\ref{proposition:evencase}, $\idim W \le 1$ and $\pdim W
  \le 1$, and the same is true for a general representation of
  dimension $\alpha$ since we have shown that $\alpha(u) \ge
  \alpha(x^j_n) + \alpha(\tau(x^j_n))$ for $j=1,\dots,r$. Also, if
  $\gamma$ is the generic rank of a map between a general
  representation of dimension $\alpha$ and $W$, then $\gamma(u) \ge
  \gamma(x^j_n) + \gamma(\tau(x^j_n))$ for $j=1,\dots,r$: the image
  $U$ of such a generic map satisfies $\idim U \le 1$ (being a
  quotient of a module with injective dimension at most 1) and $\pdim
  U \le 1$ (being a submodule of $W$), so $\dim \Gr(\gamma, W)' =
  \langle \gamma, \beta - \gamma \rangle_I$ by
  Proposition~\ref{proposition:symmetricgrassmannian}. So the
  hypotheses of Lemma~\ref{lemma:induction} are satisfied and
  Theorem~\ref{theorem:vanishingext} gives $V' \in \Rep(Q/I, \alpha)$
  such that $\ext^1_{Q/I}(V', W) = 0$. Hence $\ext^1_{Q/I}(V', -)$
  vanishes generically on $\SRep(Q/I,
  \beta^\delta)^{\ul{\eps}}$. Finally, this means that
  \[
  (V_{2\lambda^1} \otimes \cdots \otimes V_{2\lambda^r})^G = \SSI(Q/I,
  \beta^\delta)^{\ul{\eps}}_{2\sigma} \ne 0. \qedhere
  \]
\end{proof}

\begin{example} Let $n = 2$ and $r=3$ so that 
  \[
  \beta^\delta = \begin{matrix} 1 & 2 & & 2 & 1 \\ 1 & 2 & 4+\delta &
    2 & 1 \\ 1 & 2 & & 2 & 1 \end{matrix}
  \]
  The product $V_{(2)} \otimes V_{(2)} \otimes V_{(4)}$ has a
  $G$-invariant, and from Lemma~\ref{lemma:partitionform}, this gives
  a symmetric weight $\sigma$. The corresponding non-symmetric weight
  is
  \[
  \sigma^\circ = \begin{matrix} 1 & 0 & & 0 & -1 \\ 1 & 0 & ? & 0 & -1
    \\ 2 & 0 & & 0 & -2 \end{matrix}.
  \]
  Note that we have to halve the weights when we think of $\sigma$ as
  a non-symmetric weight $\sigma^\circ$. The question mark means that
  there is nothing assigned to the symmetric weight at that vertex,
  and hence the non-symmetric value cannot be determined yet. Using
  the fact that $\langle \alpha, \eps_x \rangle_I = \sigma^\circ_x$
  gives us that
  \[
  \alpha = \begin{matrix} 1 & 1 & & a-1 & a-2 \\ 1 & 1 & a &
    a-1 & a-2 \\ 2 & 2 & & a-2 & a-4 \end{matrix}
  \]
  for some value of $a$. Now $0 = \langle \alpha, \beta \rangle_I =
  (a-4)(4 + \delta)$ means that $a = 4$. 

  The product $V_{(1)} \otimes V_{(1)} \otimes V_{(2)}$ also has a
  $G$-invariant, but the corresponding symmetric weight $\sigma'$ is
  not divisible by 2, which means that we cannot express $\sigma'$ as
  a non-symmetric weight. This means that the $\sigma'$-weight space
  is spanned by Pfaffian semi-invariants but does not contain any
  determinantal semi-invariants. 
\end{example}

\small \noindent Steven V Sam, Department of Mathematics,
Massachusetts Institute of Technology, Cambridge, MA 02139 \\
{\tt ssam@math.mit.edu}, \url{http://math.mit.edu/~ssam/}

\end{document}